\title{\textbf{Diffusion approximation of critical controlled multi-type branching processes}}
\author{
  {\scshape Mátyás Barczy\textsuperscript{1}, Miguel González\textsuperscript{2}, Pedro Martín-Chávez\textsuperscript{2,$\ast$} and Inés del Puerto\textsuperscript{2}}\\
  \textsuperscript{1}{\small  HUN-REN--SZTE Analysis and Applications Research Group, Bolyai Institute, University of Szeged, 6720 Szeged, Aradi vértanúk tere 1., Hungary}\\
  \textsuperscript{2}{\small Department of Mathematics, University of Extremadura, 06006 Badajoz, Av.\ de Elvas s/n, Spain}\\
  \medskip
  {\small Emails: barczy@math.u-szeged.hu (M. Barczy), mvelasco@unex.es (M. González), pedromc@unex.es (P. Martín-Chávez) and idelpuerto@unex.es (I. del Puerto)}\\
  \textsuperscript{$\ast$}{\small Corresponding author}
}
\date{}
\newcommand{\prob}[1]{\operatorname{P} \left[ #1 \right]}
\newcommand{\ev}[1]{\operatorname{E}\! \left[ #1 \right]} 
\newcommand{\evcond}[2]{\operatorname{E} \left[ #1 \;\middle|\; #2 \right]} 
\newcommand{\var}[1]{\operatorname{Var}\! \left[ #1 \right]} 
\newcommand{\varcond}[2]{\operatorname{Var} \left[ #1 \;\middle|\; #2 \right]} 
\newcommand{\cov}[2]{\operatorname{Cov} \left[ #1 ,\; #2 \right]} 
\newcommand{\N}{\mathbb{N}}
\newcommand{\Z}{\mathbb{Z}}
\newcommand{\R}{\mathbb{R}}
\newcommand{\tr}[1]{\operatorname{tr} \left( #1 \right)} 
\newcommand{\dif}{\,\mathrm{d}}
\newcommand{\1}{\mathds{1}}
\newcommand{\OO}{\operatorname{O}}
\newcommand{\oo}{\operatorname{o}}
\newcommand{\bz}{{\boldsymbol{z}}}
\newcommand{\bZ}{{\boldsymbol{Z}}}
\newtheorem{theorem}{Theorem}[section]
\newtheorem{prop}[theorem]{Proposition}
\newtheorem{corollary}[theorem]{Corollary}
\newtheorem{lemma}[theorem]{Lemma}
\theoremstyle{definition}
\newtheorem{hyp}{Hypothesis}
\newtheorem{remark}[theorem]{Remark}
\newtheorem{example}[theorem]{Example}
\newcommand{\proofend}{\hfill\mbox{$\Box$}}
\newcommand{\proofendb}{\hfill\mbox{$\blacksquare$}}
\numberwithin{equation}{section}
\begin{document}

\maketitle

\vspace*{-10mm}

\begin{abstract}
Branching processes  form an important family of stochastic processes that have been successfully applied in many fields. In this paper, we focus our attention on controlled multi-type branching processes (CMBPs). A  Feller-type diffusion approximation is derived for 
some  critical CMBPs. Namely, we consider a sequence of appropriately scaled random step
functions formed from a critical CMBP with control distributions having expectations 
that satisfy a kind of linearity assumption. 
It is proved that such a sequence converges weakly toward
a squared Bessel process supported by a ray determined by 
an eigenvector of a matrix related to the
offspring mean matrix and the control distributions of the branching process in question.
As applications, among others, we derive  Feller-type diffusion approximations of critical, primitive multi-type branching processes with immigration and some two-sex branching processes.   
We also describe the asymptotic behaviour of the relative frequencies of distinct types of individuals  for critical CMBPs.

\medskip
\noindent \textbf{Keywords.} Controlled branching processes, multi-type branching processes, two-sex branching processes, diffusion approximation, squared Bessel processes.

\medskip
\noindent \textbf{2020 Mathematics Subject Classifications.} 
60J80, 60F17.
\end{abstract}

\section{Introduction}

Branching processes can be well-applied to describe evolutionary
 systems, where elements reproduce according to certain probability laws. 
These processes are commonly used in population dynamics, and, in this context,  
 systems are referred to as populations and elements as individuals or cells. 
In this framework, multi-type branching processes, that 
 are appropriate for modelling the evolution of populations in which different types of individuals coexist, have been successfully applied. 
For instance, these processes are used for modelling
 polymerase chain reaction (see, e.g., Sagitov and Ståhlberg \cite{SAGITOV}),  for modelling cell proliferation kinetics (see, e.g., Gonz\'alez et al.\ \cite{gmpv}, Kimmel and Axelrod \cite[Chapter 5]{ka2015} or Yanev \cite{YANEV2020}) or  for studying extinction of outbreak of diseases (see, e.g., Mwasunda et al.\ \cite{MWASUNDA202273}). We will focus our attention on the class of controlled multi-type branching processes (CMBPs). This class  was first introduced in Gonz\'alez et al.\ \cite{GONZALEZ_MARTINEZ_MOTA_2005}.  The key feature of {CMBPs} is that the number of parents of each type at a given generation is determined by a random control mechanism  that depends on the number of individuals of different types in the previous generation.  
 
CMBPs  form a wide family of branching processes that include, as particular cases, the well-known classical branching processes, namely, multi-type branching processes  without or with immigration (MBPs or MBPIs), or two-sex branching processes (see details in Example \ref{Example1}). Although, for all these  processes, individuals give rise to offspring independently of each others,  in general,  a CMBP  no longer  satisfies the additive property (see, e.g., Athreya and Ney \cite[Chapter 1, page 3]{Athreya-Ney}).  This is an important property that is satisfied, for instance, by MBPs and MBPIs. 
The lack of the additive property for a general CMBP
makes it difficult to study this kind of processes and 
it comes from the following fact. 
Provided that we know the number of different types of individuals in   a generation, in case of a general CMBP, the number of different types of individuals in the next generation is a random sum of some independent random variables, while, in case of a MBP or a MBPI, it is a non-random sum
  (in the sense that the number of summands is deterministic).

This paper aims to obtain a Feller-type diffusion approximation for some critical  CMBPs (details of the model are given in Section \ref{Sec_Prob_model}) and to study the asymptotic behaviour of the relative frequencies of distinct types of individuals. 
The problems under consideration  have a dual motivation. 
On the one hand, it has interest in itself from a theoretical point of view.
Our theoretical results (see Theorem \ref{Thm_main} and its corollaries) may allow us to study relevant applied problems as well. For example, our result on the asymptotic behaviour of relative frequencies may find applications in cell kinetics.
 On the other hand, the obtained scaling limit theorem  can enable us to carry out further research on statistical inference.  For example, one may 
 complete the study on weighted conditional least square estimators 
 for some critical  CMBPs, started in Gonz\'alez et al.\ \cite{GONZALEZ_DEL-PUERTO_2010}, 
or one might start to investigate more general quasi-likelihood estimators. 
This intended research could contribute to the modeling of real data using CMBPs. 
 
From a mathematical point of view, achieving functional limit theorems for {critical branching} processes has attracted the interest of many researchers since 1951, when  
 Feller \cite{FELLER_1951} was able to provide the first formulation for  Galton--Watson processes.
  He proved that the sequence of appropriately scaled 
 critical Galton--Watson processes converges in distribution to a non-negative diffusion process without drift (for a detailed proof based on infinitesimal generators, see also Ethier and Kurtz \cite[Theorem 9.1.3]{EthKur}).
The extension to branching processes with immigration started
 with the pioneering paper of  Wei and Winnicki \cite{WEI_WINNICKI_1989},  where the limit process
 is a squared Bessel process that 
 can be characterized as the pathwise unique
 strong solution of a certain stochastic differential equation (SDE).
Focusing on controlled branching models, a further extension to the critical, single-type case was first carried out by Sriram et al.\ \cite{SRIRAM2007928}, and, more recently, by Gonz\'alez et al.\ \cite{GONZALEZ_MARTIN-CHAVEZ_DEL-PUERTO_2022}.
 Furthermore, for some critical CMBPs, under quite involved technical conditions, 
Gonz\'alez et al.\ \cite[Corollary 4.1]{gonzalez_martinez_mota_2006} proved a conditional weak limit theorem for the one-dimensional distributions 
  provided that the explosion set has a positive probability.
However, according to our knowledge, functional limit theorems for critical CMBPs are not available in the literature.
Our present paper fills this gap, it is a natural extension of the  
 result for critical single-type  controlled branching processes 
 (CBPs) in Gonz\'alez et al.\ \cite{GONZALEZ_MARTIN-CHAVEZ_DEL-PUERTO_2022} 
 to the multi-type case.

Assuming that the expectations of the control distributions satisfy a linear relationship with the population size additively perturbed by a function also depending on the population size (see \eqref{cond_epsilon_z}), we introduce a classification  for such CMBPs based on the spectral radius of a matrix related to the offspring mean matrix and to the control  distributions. Under some  additional hypotheses, we prove that a suitably scaled and normalized critical CMBP converges weakly  toward a squared Bessel process supported by a ray determined by an eigenvector of the aforementioned matrix, see Theorem \ref{Thm_main}.  As corollaries, we are able to  rediscover the known results on  Feller-type diffusion approximations  for critical, primitive MPBIs (see Corollaries \ref{cor-MBPI} and \ref{cor-MBPI-2}). 
 What is even more interesting is that we can apply our main theorem to get Feller-type diffusion approximations for some two-sex branching  processes. We emphasize that no such results are available in the literature. 
Very recently, Bansaye et al.\ \cite{bcms23} have also
proved a scaling limit theorem for a class of two-sex branching processes 
that combine classical asexual Galton–Watson processes and two-sex Galton–Watson branching processes introduced by Daley \cite{DALEY_1968}.
For a comparison of our results and theirs, see Remark \ref{Rem_Bansaye}. Finally, a result on the asymptotic behaviour of the relative frequencies  for critical CMBPs is also derived from Theorem \ref{Thm_main} (see Corollary \ref{Cor_rel_frequency}). This kind of 
 result has potential applications, for instance, in the field of cell kinetics, where it is more usual to measure  relative frequencies instead of the absolute cell counts.

The proof of our main result  (Theorem \ref{Thm_main})  follows 
 the proof scheme of Theorem 3.1 in Ispány and Pap \cite{ispany-pap-2014} for MBPIs, which is based on 
 a weak convergence result for random step processes 
 due to Isp\'any and Pap \cite[Corollary 2.2]{ispany-pap-2010} (see also Theorem \ref{thm-ispany-pap}).
This latter result has been applied in other papers to prove several scaling limit theorems, see, e.g., 
   Isp\'any and Pap \cite{ispany-pap-2014} and R\'ath \cite{Rat}.
The lack of the additive property for a general CMBP
 makes the proof of our Theorem \ref{Thm_main} more involved.
Next, we outline the course of the proof, and we also point out the new ingredients in it.
We start with determining the conditional moments of the branching process (see Proposition \ref{Pro_cond_moment}), which are essential to find out  the asymptotic behaviour of some moments of the branching process 
 (see Lemma \ref{lemma-moments}).
The heart of the proof of Theorem \ref{Thm_main} 
 is an application of Theorem \ref{thm-ispany-pap} for a sequence
 of martingale differences formed from the CMBP in question
 (see \eqref{eq-conv-M} in \nameref{step1}).
Then, in case of MBPIs, the  Feller-type diffusion approximation   follows straightforwardly from a continuous mapping theorem (see Theorem \ref{cont-map-thm}), as Ispány and Pap \cite{ispany-pap-2014} showed.
However, this is not the case for general CMBPs,  
 an extra additional work (see \nameref{step3}) is required.
We also emphasize that the additive perturbation of the 
 expectations of the control distributions mentioned above  (see also \eqref{cond_epsilon_z}) results a new difficulty in the proofs compared to MBPIs,
 and it is addressed in the proofs of Steps 1 and 3.

The paper is structured as follows. 
In Section \ref{Sec_Prob_model}, CMBPs are defined,  and, 
 under the linearity assumption \eqref{formula_cond_exp_hyp} 
 on the conditional expectations, 
we introduce their classification  by distinguishing 
 subcritical, critical and supercritical CMBPs.
By giving examples, we also point out that different types 
 of classical branching processes can be viewed as particular  cases of our model, thus illustrating its wide scope of applicability. 
 In Section \ref{Section_Results}, we collect all the hypotheses that  
 are assumed and we present all of our results obtained.
Section \ref{section-proof} is devoted to the proof of Theorem \ref{Thm_main}, which is structured in four steps
 for an easy reading. 
We close the paper with an Appendix which contains 
 some auxiliary results  such as the asymptotic behaviour of the first and second moments of CMBPs and the second and fourth moments of a corresponding martingale difference (see Lemma \ref{lemma-moments}).

\section{Controlled multi-type branching processes}\label{Sec_Prob_model}

Let $(\Omega,\mathcal{F},\mathrm{P})$ be a fixed probability space on which all the random variables will be defined, and let $\N$, $\Z_+$, $\R$, $\R_+$ and $\R_{++}$ be the set of positive integers, non-negative integers, real numbers, non-negative real numbers, and positive real numbers, respectively. 
For all $\boldsymbol{x},\boldsymbol{y}\in\R^p,$ let us denote by $\boldsymbol{x}
 \preceq \boldsymbol{y}$ if each coordinate of $\boldsymbol{x}$ is less than 
 or equal to the corresponding coordinate of $\boldsymbol{y}$.
For $\bz=(z_1,\ldots,z_p)^\top \in\R^p$, let 
  $|\boldsymbol{z}|\colonequals(\vert z_1\vert,\ldots,\vert z_p\vert)^\top\in\R_+^p$,
 and $\bz^+ \colonequals (z_1^+,\ldots,z_p^+)^\top \in\R_+^p$, 
 where  $x^+$ stands for the positive part of $x\in\R$ and  $^\top$ for the transpose. 
  The natural basis in $\R^p$ is denoted by $\boldsymbol{e}_1,\ldots,\boldsymbol{e}_p$.
The null vector in $\R^p$ is denoted by $\boldsymbol{0}_p$.
The Borel sigma-algebra on $\R^p$ is denoted by $\mathcal{B}(\R^p)$.
The trace of a matrix $\mathsf{A}\in\R^{p\times p}$ is denoted by $\tr{\mathsf{A}}$.
The $p\times p$ identity matrix is denoted by $\mathsf{I}_p$.  
For a matrix  $\mathsf{A}\in\R^{l\times p}$, let 
 $\operatorname{Null}(\mathsf{A}) \colonequals \{\boldsymbol{x}\in\R^p : \mathsf{A}\boldsymbol{x} =  \boldsymbol{0}_l\}$.
Along the paper, we will not distinguish between the notations of 
 the norm of a vector in $\R^p$ and that of a matrix in $\R^{p\times p}$. 
For $\bz\in \R^p$, let $\|\bz\|$ denote the Euclidean norm of $\bz$, 
 and for a matrix $\mathsf{A}\in \R^{p\times p}$, let $\|\mathsf{A}\| \colonequals \max\{\|\mathsf{A}\bz\|: \|\bz\|=1, \, \bz\in\R^p\}$.
For a positive semi-definite matrix $\mathsf{A}\in\R^{p\times p}$,  
 let $\sqrt{\mathsf{A}}$ denote the unique symmetric positive semi-definite square root of $\mathsf{A}$.
For a function $\boldsymbol{h}:\R^p\to\R^p$, by the notation $\boldsymbol{h}(\bz) = \OO(f(\bz))$ as $\Vert\bz\Vert\to\infty$, where $f:\R^p\to\R_+$,
 we mean that there exist $C>0$ and $R>0$ such that $\Vert \boldsymbol{h}(\bz)\Vert\leq C f(\bz)$ for 
 all $\bz\in\R^p$ with $\Vert \bz\Vert>R$. 
Further, if for all $\epsilon>0$ there exists $R>0$ such that $\Vert \boldsymbol{h}(\bz)\Vert\leq \epsilon f(\bz)$ for 
 all $\bz\in\R^p$ with $\Vert \bz\Vert>R$, then we write $\boldsymbol{h}(\bz) = \operatorname{o}(f(\bz))$ as $\Vert\bz\Vert\to\infty$.
Convergence in probability is denoted by $\stackrel{\mathrm{P}}{\longrightarrow}$.
Some further notations for weak convergence of stochastic processes with  càdlàg sample paths are recalled in Section \ref{Section_Results}.

For a fixed $p\in\N$  and a $\Z_+^p$--valued random vector $\boldsymbol{Z}_0$, let us consider a controlled $p$--type branching process $(\boldsymbol{Z}_{k})_{k\in\Z_+}$, defined recursively as
\begin{equation}\label{CMBP_def}
    \boldsymbol{Z}_{k+1}  \colonequals \sum_{i=1}^{p} \sum_{j=1}^{\phi_{k,i}(\boldsymbol{Z}_{k})} \boldsymbol{X}_{k,j,i}, \quad \quad k\in\Z_+,
\end{equation}
where $\boldsymbol{Z}_{k}$, 
 $\boldsymbol{\phi}_{k} (\boldsymbol{z})$, $\boldsymbol{z}\in\Z_+^p$,
 and $\boldsymbol{X}_{k,j,i}$ are $\Z_+^p$--valued random vectors:
\begin{equation*}
    \boldsymbol{Z}_{k}  \equalscolon 
    \begin{pmatrix}
        Z_{k,1} \\
        \vdots \\
        Z_{k,p}
    \end{pmatrix}, \qquad
    \boldsymbol{\phi}_{k} (\boldsymbol{z})  \equalscolon 
    \begin{pmatrix}
        \phi_{k,1} (\boldsymbol{z}) \\
        \vdots \\
        \phi_{k,p} (\boldsymbol{z})
    \end{pmatrix}, \qquad
    \boldsymbol{X}_{k,j,i}  \equalscolon 
    \begin{pmatrix}
        X_{k,j,i,1} \\
        \vdots \\
        X_{k,j,i,p}
    \end{pmatrix}.
\end{equation*}
The intuitive interpretation of the process  $(\boldsymbol{Z}_{k})_{k\in\Z_+}$ is as follows: 
\begin{itemize}
    \item $Z_{k,i}$ is the number of $i$--type individuals in the $k$--th generation,
    \item $\phi_{k,i}(\boldsymbol{Z}_{k})$ is the number of $i$--type progenitors in the $k$--th generation,
    \item $X_{k,j,i,l}$ is the number of $l$--type offsprings of the $j$--th $i$--type progenitor in the $k$--th generation.
\end{itemize}
 
Assume that $\{\boldsymbol{Z}_{0}, \boldsymbol{\phi}_{k} (\boldsymbol{z}),\boldsymbol{X}_{k,j,i}: k\in\Z_+, \, j\in\N,\, \boldsymbol{z}\in\Z_+^p,\,i\in\{1,\ldots,p\}\}$ are independent, $\{\boldsymbol{\phi}_{k}(\boldsymbol{z}) :k\in\Z_+\}$ are identically distributed for each $\boldsymbol{z}\in\Z_+^p$ and $\{\boldsymbol{X}_{k,j,i}: k\in\Z_+,\,j\in\N\}$ are also identically distributed for each $i\in\{1,\ldots,p\}$.
The distributions of $\boldsymbol{\phi}_0(\boldsymbol{z})$, $\bz\in\Z_+^p$, 
 are called the control distributions. 
 
Moreover, for clarity of the readers, let us write out \eqref{CMBP_def} for
 controlled two-type branching processes. 
The number of $i$--type individuals in the initial generation 
 is $Z_{0,i}$, $i=1,2$, and
 the recursive definition in \eqref{CMBP_def} takes the form
\begin{equation*}
    \begin{pmatrix}
        Z_{k+1,1} \\
        Z_{k+1,2}
    \end{pmatrix} = \sum_{j=1}^{\phi_{k,1}(Z_{k,1}, Z_{k,2})} \begin{pmatrix}
        X_{k,j,1,1} \\
        X_{k,j,1,2}
    \end{pmatrix} + \sum_{j=1}^{\phi_{k,2}(Z_{k,1}, Z_{k,2})} \begin{pmatrix}
        X_{k,j,2,1} \\
        X_{k,j,2,2}
    \end{pmatrix},
    \qquad k\in\Z_+.
\end{equation*} 
Part (iv) of Example \ref{Example1} serves as a good illustration 
 of a two-type case.

It is easy to check that the process defined in \eqref{CMBP_def} is a 
 $\Z_+^p$-valued Markov chain. This model is very general, several other popular branching processes can be seen as particular cases of the CMBPs, see Example \ref{Example1}. 

\begin{example}\label{Example1}
\hfill
\begin{itemize}
    \item[(i)] Multi-type branching processes.
      We get the subclass of $p$--type  branching processes 
      without immigration by defining the  deterministic control function as $\boldsymbol{\phi}_{k} (\boldsymbol{z})  \colonequals  \boldsymbol{z}$, $\bz\in\Z_+^p$, $k\in\Z_+$, 
      in \eqref{CMBP_def}.
    \item[(ii)] Multi-type branching processes with immigration. Let us consider such a MBPI ${(\boldsymbol{Y}_{k})_{k\in\Z_+}}$ given by
    \begin{equation}\label{def_MBPI}
        \boldsymbol{Y}_{k+1} \colonequals \sum_{i=1}^{p} \sum_{j=1}^{Y_{k,i}} \boldsymbol{\xi}_{k,j,i} + \boldsymbol{I}_{k+1}, \quad \quad k\in\Z_+,
    \end{equation}
    where $\{ \boldsymbol{Y}_0, \, \boldsymbol{\xi}_{k,j,i},  \boldsymbol{I}_k :  k\in\Z_+, \, j\in\N,\, i\in\{1,\ldots,p\}\}$ are independent $\Z_+^p$--valued random vectors, 
    $\{ \boldsymbol{\xi}_{k,j,i}: k\in\Z_+,\, j\in\N \}$ are identically distributed for each $i\in\{1,\ldots,p\}$ (offspring distributions) and $\{ \boldsymbol{I}_k:  k\in\Z_+\}$ are also identically distributed (immigration distribution).
    Note that $(\boldsymbol{Y}_k)_{k\in\Z_+}$
    can be written as a $(p+1)$--type branching process 
     with
    \begin{equation}\label{help_MBPI_2nd}
     \boldsymbol{Z}_{k} \colonequals
        \begin{pmatrix}
            \boldsymbol{Y}_{k} \\
            {1}
        \end{pmatrix}, 
        \qquad
        \boldsymbol{X}_{k,j,i} \colonequals
        \begin{pmatrix}
            \boldsymbol{\xi}_{k,j,i} \\
            0
        \end{pmatrix},  \qquad
        {\boldsymbol{X}_{k,1,p+1} } \colonequals
        \begin{pmatrix}
            \boldsymbol{I}_{k+1} \\
            1
        \end{pmatrix}
    \end{equation}
    for $k\in\Z_+$, $j\in \N$,
    $i\in\{1,\ldots,p\}$,  when $\boldsymbol{\phi}_{k} (\boldsymbol{z})  \colonequals  \boldsymbol{z}$, $\bz\in\Z_+^{p+1}$, $k\in\Z_+$ (see (i)).
    Moreover, $(\boldsymbol{Y}_k)_{k\in\Z_+}$ can also be written as a controlled $(p+1)$--type branching process with the choices given in \eqref {help_MBPI_2nd} 
    and the control functions
    \[ 
      \boldsymbol{\phi}_{k} (\boldsymbol{z}) \colonequals
        \begin{pmatrix}
            z_1 \\
            \vdots \\
            z_p \\
            1
        \end{pmatrix},\qquad \bz\in\Z_+^{p+1},\quad  k\in\Z_+.
    \]
    \item[(iii)] Multi-type branching processes with migration. Let us consider a stochastic process ${(\boldsymbol{Z}_{k})_{k\in\Z_+}}$ given by
    \begin{equation*}
        \boldsymbol{Z}_{k+1}  \colonequals \sum_{i=1}^{p} \sum_{j=1}^{Z_{k,i}+M_{k,i}(Z_{k,i})} \boldsymbol{X}_{k,j,i}, \quad \quad k\in\Z_+,
    \end{equation*}
    where $\{\boldsymbol{Z}_{0}, \boldsymbol{M}_{k}(\bz) ,\boldsymbol{X}_{k,j,i}: k\in\Z_+, \, j\in\N,\, \boldsymbol{z}\in\Z_+^p,\,i\in\{1,\ldots,p\}\}$ are independent $\Z^p$--valued random vectors 
    such that 
    $\boldsymbol{Z}_{0}$ and $\boldsymbol{X}_{k,j,i}$ have non-negative coordinates,
    $\{\boldsymbol{M}_{k}(\bz) \colonequals (M_{k,1}(z_1),\ldots,M_{k,p}(z_p))^\top : k\in\Z_+\}$ are identically distributed for each $\bz\in\Z_+^p$ with  range contained in $[-z_1,\infty) \times \cdots \times [-z_p,\infty)$, and $\{\boldsymbol{X}_{k,j,i}: k\in\Z_+,\,j\in\N\}$ are also identically distributed for each $i\in\{1,\ldots,p\}$. For each $i\in\{1,\ldots,p\}$, $M_{k,i}(z_i)$ can be interpreted as a migration component for the $i$--type individuals in the $k$--th generation. Depending on the sign of $M_{k,i}(z_i)$, there is emigration (negative value), immigration (positive  value) or no migration (zero value). 
    In fact, this is an equivalent way of writing a CMBP, because every $\boldsymbol{\phi}_{k} (\boldsymbol{z})$ can be written as $\boldsymbol{z} + \boldsymbol{M}_{k}(\bz)$, $\bz\in\Z_+^p$, $k\in\N$, with appropriate choices  of $\boldsymbol{M}_{k}(\bz)$.

    \item[(iv)] Two-sex Galton--Watson branching processes with immigration (2SBPIs). 
    Let us consider such a process $(F_k,M_k)_{k\in\Z_+}$
    with offspring and immigration distribution defined as follows. Let $(F_0,M_0)$ be an $\Z_+^2$--valued random variable 
    (random initial generation) and
    \begin{equation}\label{eq-definition}
    \begin{aligned}
    (F_{k+1},M_{k+1}) & \colonequals  \sum_{j=1}^{U_k} (f_{k,j},m_{k,j}) + (F_{k+1}^{I},M_{k+1}^{I}),\qquad  k\in\Z_+,\\
    U_k & \colonequals  L(F_k,M_k), \qquad k\in\Z_+ ,
    \end{aligned}
    \end{equation}
    where $\{(F_0,M_0),(f_{k,j},m_{k,j}), (F_k^I,M_k^I) : k\in\Z_+,\, j\in\N\}$ are independent $\Z_+^2$--valued random vectors, 
    $\{ (f_{k,j},m_{k,j}) : k\in\Z_+,\, j\in\N \}$ are identically distributed 
    (offspring distribution), and $\{ (F_k^I,M_k^I) : k\in\Z_+\}$ are also identically distributed (immigration distribution). 
    Further, $(U_k)_{k\in\Z_+}$ is a sequence of mating units corresponding to the mating function $L:\Z_+\times \Z_+ \to\Z_+$, assumed to be non-decreasing in each  argument. 
    In the terminology of Asmussen \cite{Asm}, $L$ can be called a marriage function as well.
    Then the 2SBPI $(F_k,M_k)_{k\in\Z_+}$ can be considered a 
    CMBP $(\bZ_k)_{k\in\Z_+}$ given by 
    \begin{align*}
     \boldsymbol{Z}_{k}  \colonequals 
        \begin{pmatrix}
            F_{k} \\
            M_{k}
        \end{pmatrix}, \qquad
        \boldsymbol{\phi}_{k} (\boldsymbol{z})  \colonequals 
        \begin{pmatrix}
            L(\boldsymbol{z}) \\
            1 
        \end{pmatrix}, \qquad
        \boldsymbol{X}_{k,j,1}  \colonequals 
        \begin{pmatrix}
            f_{k,j} \\
            m_{k,j}
        \end{pmatrix}, \qquad
        \boldsymbol{X}_{k,j,2}  \colonequals 
        \begin{pmatrix}
            F_{k+1}^{I} \\[1mm]
            M_{k+1}^{I}
        \end{pmatrix} 
    \end{align*}
    for $k\in\Z_+$, $j\in\N$, and $\bz\in\Z_+^2$.
  In this case, we emphasize that the coordinates of the control 
  can be interpreted in a different way compared to what is written after \eqref{CMBP_def}.
 Namely, the first coordinate of the control denotes the number of couples, while the second one denotes an extra couple, which brings in female and male immigrants.
    \proofendb
\end{itemize}
\end{example}

\begin{remark}\label{rem-interpretation}
    As happens for the 2SBPI, the intuitive interpretation of the branching process that appears 
     in  part (ii) of Example \ref{Example1} 
     is lost when we see it as a $(p+1)$--type branching process or as a controlled $(p+1)$--type branching process.  Indeed,  we add  the possibility that there is an extra type of individuals in the population,  but the number of them in all generations is $1$.
     The only one individual of extra type always gives birth one individual of extra type and possibly some individuals of other types, which correspond to the immigrants arriving in the population.  
    Despite the fact that both cases lack practical interpretability,  this mathematical description is perfectly valid and useful: 
    \begin{itemize}
        \item[(i)] It will enable us to recover the known result 
         on the asymptotic behaviour of critical, primitive MBPIs 
        due to Isp\'any and Pap \cite[Theorem 3.1]{ispany-pap-2014}
        (see Corollary \ref{cor-MBPI}). Furthermore, if we do not want to lose the intuitive interpretation, we could consider the controlled $p$--type branching process $(\boldsymbol{Z}_{k})_{k\in\Z_+}$ 
        with
        $\bZ_0 \colonequals \boldsymbol{Y}_0$,
        $\boldsymbol{\phi}_{k} (\boldsymbol{z})  \colonequals  \boldsymbol{z} + \boldsymbol{I}_{k+1}$, and $\boldsymbol{X}_{k,j,i} \colonequals  \boldsymbol{\xi}_{k,j,i}$ for $k\in\Z_+$, $j\in\N$, and $\bz\in\Z_+^p$, 
         where $\{ \boldsymbol{Y}_0, \, \boldsymbol{\xi}_{k,j,i},  \boldsymbol{I}_k :   k\in\Z_+, \, j\in\N,\, i\in\{1,\ldots,p\}\}$ are
         given in part (ii) of Example \ref{Example1}.
         This is the third representation of a MBPI (for the other two ones, see part (ii) of Example \ref{Example1}). 
       Note  also that $(\boldsymbol{Z}_{k})_{k\in\Z_+}$ is 
       a particular case of the multi-type branching process with migration  
        in part (iii) in Example \ref{Example1}.
        In the single-type critical case, Gonz\'alez et al.\ \cite[Remark 3.2.2]{GONZALEZ_MARTIN-CHAVEZ_DEL-PUERTO_2022} proved that (under some moment assumptions)  $(n^{-1} Z_{\lfloor nt\rfloor})_{t\in\R_+}$
        converges weakly as $n\to\infty$ toward a limit process, which coincides with the limit process for a critical (usual) single-type branching processes with immigration scaled and normalized in the same way.
        Corollary \ref{cor-MBPI-2} will imply that a version of this statement remains true in the $p$--type critical case.

        \item[(ii)] It is the first time that a 2SBPI is written as a CMBP 
        (see part (iv) of Example \ref{Example1}), such a rewriting has not been considered until now in the literature. In fact, since the control distributions are deterministic in case of a 2SBPI, it suggests a possible way to generalize the notion of a 2SBPI by allowing random mating functions. Earlier, only two-sex Galton--Watson branching processes without immigration, introduced by Daley \cite{DALEY_1968}, were viewed as special controlled branching processes (see Sevast'yanov and Zubkov \cite[model 3]{SEVASTYANOV_ZUBKOV_1974}).
        Furthermore, Theorem \ref{Thm_main} together with part (iv) of Example \ref{Example1} will allow us to obtain 
        scaling limit theorems for some critical 2SBPIs previously not considered in the literature (see Corollary \ref{cor-2SBPI}).
        \proofendb
    \end{itemize}  
\end{remark}

Let us introduce notations for some moments. 
In all what follows, we suppose that $\ev{\|\boldsymbol{X}_{0,1,i}\|^4}<\infty$ for $ i\in\{1,\ldots,p\}$ and $\ev{\|\boldsymbol{\phi}_{0}(\boldsymbol{z})\|^4}<\infty$ for $\boldsymbol{z}\in\Z_+^p$, and we denote
\begin{align}
    \boldsymbol{m}_i & \colonequals  \ev{\boldsymbol{X}_{0,1,i}}\in\R_+^p, & \boldsymbol{\varepsilon}(\boldsymbol{z}) & \colonequals   \ev{\boldsymbol{\phi}_{0}(\boldsymbol{z})}\in\R_+^p, \label{notats_moments_1} \\
    \mathsf{\Sigma}_i & \colonequals  \var{\boldsymbol{X}_{0,1,i}}\in\R^{p\times p}, & \mathsf{\Gamma}(\boldsymbol{z}) & \colonequals  \var{\boldsymbol{\phi}_{0}(\boldsymbol{z})}\in\R^{p\times p}, \label{notats_moments_2} \\\label{notats_moments_3}
     \zeta_{i,l} & \colonequals  \ev{(X_{0,1,i,l}-m_{i,l})^4}\in\R_+, & \kappa_i(\boldsymbol{z}) & \colonequals  \ev{(\phi_{0,i}(\boldsymbol{z}) - \varepsilon_i(\boldsymbol{z}))^4}\in\R_+,
\end{align}
where $i,l\in\{1,\ldots,p\}$ and $\bz\in\Z_+^p$.

Note that the moments defined in \eqref{notats_moments_1} and \eqref{notats_moments_2}
 are of course well-defined under weaker assumptions, namely, under the existence
  of first and second order moments  of the offspring and control distributions, respectively.
We also remark that  $\boldsymbol{m}_i$, $\mathsf{\Sigma}_i$  and $\zeta_{i,l}$, $i,l\in\{1,\ldots,p\}$, do not depend on the control distributions, while
 $\boldsymbol{\varepsilon}(\boldsymbol{z})$, $\mathsf{\Gamma}(\boldsymbol{z})$,
 and $\kappa_i(\boldsymbol{z})$, $\bz\in\Z_+^p$, $i\in\{1,\ldots,p\}$, do depend.

Let us consider the canonical filtration of the process $\mathcal{F}_k \colonequals \sigma(\boldsymbol{Z}_{0},\ldots,\boldsymbol{Z}_{k}),$ $k\in\Z_+,$ introduce the matrix
$\mathsf{m}  \colonequals  (\boldsymbol{m}_1,\ldots, \boldsymbol{m}_p)
\in\R_+^{p\times p},$ and the operator $\odot :\R^p \times (\R^{p\times p})^p  
 \to  \R^{p\times p}$, $\boldsymbol{z}\odot\boldsymbol{\mathsf{A}} 
     \colonequals  \sum_{i=1}^p z_i\mathsf{A}_i$ for 
 $\boldsymbol{z}=(z_1,\ldots,z_p)^\top\in\R^p$ and $\boldsymbol{\mathsf{A}}=(\mathsf{A}_1,\ldots,\mathsf{A}_p) \in (\R^{p\times p})^p$.

The following proposition is a multi-type counterpart of 
 Proposition 3.5 in Gonz\'alez et al.\ \cite{GonPueYan_2018}.
    
\begin{prop}\label{Pro_cond_moment}
    For each $k\in\N$, we have
    \begin{align}
        \evcond{\boldsymbol{Z}_{k}}{\mathcal{F}_{k-1}} &= \mathsf{m}\boldsymbol{\varepsilon}(\boldsymbol{Z}_{k-1}), \label{eq-conditional-expected-value} \\
        \varcond{\boldsymbol{Z}_{k}}{\mathcal{F}_{k-1}} &= \boldsymbol{\varepsilon}(\boldsymbol{Z}_{k-1})\odot\boldsymbol{\mathsf{\Sigma}} + \mathsf{m} \mathsf{\Gamma}(\boldsymbol{Z}_{k-1}) \mathsf{m}^{\top}, \label{eq-conditional-variance}
    \end{align}
   where $\boldsymbol{\mathsf{\Sigma}} \colonequals (\mathsf{\Sigma}_1, \ldots,\mathsf{\Sigma}_p)\in (\R^{p\times p})^p$.  
\end{prop}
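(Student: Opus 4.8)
The plan is to prove both identities by a two-stage conditioning (the tower property) that first freezes the progenitor counts and then averages over the control mechanism. Fix $k\in\N$ and write $\bZ_k = \sum_{i=1}^p \sum_{j=1}^{\phi_{k-1,i}(\bZ_{k-1})} \boldsymbol{X}_{k-1,j,i}$. The natural intermediate object is the enlarged $\sigma$-field $\mathcal{G}_{k-1} \colonequals \sigma\bigl(\mathcal{F}_{k-1} \cup \sigma(\boldsymbol{\phi}_{k-1}(\bZ_{k-1}))\bigr)$, which makes the number of progenitors of each type known. To handle the random composition $\boldsymbol{\phi}_{k-1}(\bZ_{k-1})$ rigorously I would argue eventwise on $\{\bZ_{k-1}=\bz\}$ for each fixed $\bz\in\Z_+^p$: there $\boldsymbol{\phi}_{k-1}(\bz)$ is an honest random vector, distributed as $\boldsymbol{\phi}_0(\bz)$ and independent of $\mathcal{F}_{k-1}$ and of all the $\boldsymbol{X}_{k-1,j,i}$, so that $\evcond{\boldsymbol{\phi}_{k-1}(\bZ_{k-1})}{\mathcal{F}_{k-1}} = \boldsymbol{\varepsilon}(\bZ_{k-1})$ and $\varcond{\boldsymbol{\phi}_{k-1}(\bZ_{k-1})}{\mathcal{F}_{k-1}} = \mathsf{\Gamma}(\bZ_{k-1})$, while conditionally on $\mathcal{G}_{k-1}$ the offspring vectors remain i.i.d.\ with mean $\boldsymbol{m}_i$ and covariance $\mathsf{\Sigma}_i$ for each fixed $i$ and independent across $i$.

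For \eqref{eq-conditional-expected-value}, a Wald-type identity for the random sum gives $\evcond{\bZ_k}{\mathcal{G}_{k-1}} = \sum_{i=1}^p \phi_{k-1,i}(\bZ_{k-1})\,\boldsymbol{m}_i = \mathsf{m}\,\boldsymbol{\phi}_{k-1}(\bZ_{k-1})$, since the columns of $\mathsf{m}$ are exactly the $\boldsymbol{m}_i$. Taking conditional expectation with respect to $\mathcal{F}_{k-1}$ and pulling the linear map $\mathsf{m}$ through then yields $\evcond{\bZ_k}{\mathcal{F}_{k-1}} = \mathsf{m}\,\boldsymbol{\varepsilon}(\bZ_{k-1})$.

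For \eqref{eq-conditional-variance}, I would apply the law of total variance with the intermediate $\sigma$-field $\mathcal{G}_{k-1}$, writing $\varcond{\bZ_k}{\mathcal{F}_{k-1}} = \evcond{\varcond{\bZ_k}{\mathcal{G}_{k-1}}}{\mathcal{F}_{k-1}} + \varcond{\evcond{\bZ_k}{\mathcal{G}_{k-1}}}{\mathcal{F}_{k-1}}$. The conditional independence of the offspring across both $j$ and $i$ turns the first inner variance into a sum, $\varcond{\bZ_k}{\mathcal{G}_{k-1}} = \sum_{i=1}^p \phi_{k-1,i}(\bZ_{k-1})\,\mathsf{\Sigma}_i = \boldsymbol{\phi}_{k-1}(\bZ_{k-1})\odot\boldsymbol{\mathsf{\Sigma}}$, and averaging over the control produces the first summand $\boldsymbol{\varepsilon}(\bZ_{k-1})\odot\boldsymbol{\mathsf{\Sigma}}$. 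For the second summand I reuse $\evcond{\bZ_k}{\mathcal{G}_{k-1}} = \mathsf{m}\,\boldsymbol{\phi}_{k-1}(\bZ_{k-1})$ together with the covariance transformation rule $\var{\mathsf{A}\boldsymbol{X}} = \mathsf{A}\var{\boldsymbol{X}}\mathsf{A}^\top$ to obtain $\mathsf{m}\,\mathsf{\Gamma}(\bZ_{k-1})\,\mathsf{m}^\top$; adding the two contributions gives the claim.

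The only real obstacle is the bookkeeping of the conditioning, namely justifying that evaluating the control family $\boldsymbol{\phi}_{k-1}(\cdot)$ at the random argument $\bZ_{k-1}$ preserves the independence of the offspring vectors from the progenitor counts. This is precisely the feature that distinguishes CMBPs from ordinary MBPIs, and where the lack of the additive property would otherwise cause trouble; working eventwise on $\{\bZ_{k-1}=\bz\}$, where everything reduces to compound sums with a fixed deterministic argument, dissolves this difficulty, after which all remaining steps are routine moment computations for random sums.
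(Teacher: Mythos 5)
Your proposal is correct and follows essentially the same route as the paper's proof: both perform a two-stage conditioning that first freezes the control (the paper by reducing to $\{\bZ_{k-1}=\bz\}$ via the Markov property and then conditioning on $\boldsymbol{\phi}_{k-1}(\bz)$, you via the enlarged $\sigma$-field $\mathcal{G}_{k-1}$), and then apply the Wald-type identity, the law of total variance, and the rule $\var{\mathsf{m}\boldsymbol{\phi}} = \mathsf{m}\,\mathsf{\Gamma}\,\mathsf{m}^\top$. The only difference is bookkeeping: the paper computes unconditional moments at fixed $\bz$, whereas you keep conditional expectations given $\mathcal{F}_{k-1}$ throughout and invoke the fixed-$\bz$ events only to justify the freezing steps, which is legitimate since $\bZ_{k-1}$ takes countably many values.
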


\begin{proof}
Let $k\in\N$ be fixed arbitrarily.
 By the Markov property, we get
 $\ev{\bZ_k \mid \mathcal{F}_{k-1}} = \ev{\bZ_k \mid \bZ_{k-1}}$.
Using that $\boldsymbol{\phi}_{k-1}(\boldsymbol{z})$, $\bz\in\Z_+^p$,
 and $\mathbf{X}_{k-1,j,i}$, $j\in\N$, $i\in\{1,\ldots,p\}$, are independent
 of each other and of $\bZ_{k-1}$, we have
    \begin{align*}
      \ev{\bZ_k \mid \bZ_{k-1} = \bz}
       & = \ev{\sum_{i=1}^{p} \sum_{j=1}^{\phi_{k-1,i}(\boldsymbol{z})} \boldsymbol{X}_{k-1,j,i}} = \sum_{i=1}^{p} \ev{\evcond{\sum_{j=1}^{\phi_{k-1,i}(\boldsymbol{z})} \boldsymbol{X}_{k-1,j,i}}{\boldsymbol{\phi}_{k-1}(\boldsymbol{z})}} \\
       &= \sum_{i=1}^{p} \ev{\phi_{k-1,i}(\boldsymbol{z})    
                                \ev{\boldsymbol{X}_{k-1,j,i} } } 
        = \sum_{i=1}^{p} \ev{\phi_{0,i}(\boldsymbol{z})}  \boldsymbol{m}_i
        = \mathsf{m}\boldsymbol{\varepsilon}(\boldsymbol{z}), \qquad \bz\in\Z_+^p,
    \end{align*}
    which implies \eqref{eq-conditional-expected-value}.
    
Now we turn to prove \eqref{eq-conditional-variance}.
Using again the Markov property and \eqref{eq-conditional-expected-value}, 
we have that $ \varcond{\bZ_k}{\mathcal{F}_{k-1}} = \varcond{\bZ_k}{\bZ_{k-1}}$,
and, using also the law of total variance, similarly as before, 
we get
 \begin{align*}
     \varcond{\bZ_k}{\bZ_{k-1}=\bz} = \var{\sum_{i=1}^{p} \sum_{j=1}^{\phi_{k-1,i}(\boldsymbol{z})} \boldsymbol{X}_{k-1,j,i}} &= \ev{\varcond{\sum_{i=1}^{p} \sum_{j=1}^{\phi_{k-1,i}(\boldsymbol{z})} \boldsymbol{X}_{k-1,j,i}}{\boldsymbol{\phi}_{k-1}(\boldsymbol{z})}} \\
        &\quad + \var{\evcond{\sum_{i=1}^{p} \sum_{j=1}^{\phi_{k-1,i}(\boldsymbol{z})} \boldsymbol{X}_{k-1,j,i}}{\boldsymbol{\phi}_{k-1}(\boldsymbol{z})}}
 \end{align*}
    for $\bz\in\Z_+^p$, where 
    \begin{align*}
        \ev{\varcond{\sum_{i=1}^{p} \sum_{j=1}^{\phi_{k-1,i}(\boldsymbol{z})} \boldsymbol{X}_{k-1,j,i}}{\boldsymbol{\phi}_{k-1}(\boldsymbol{z})}} 
       & = \ev{\sum_{i=1}^{p} \sum_{j=1}^{\phi_{k-1,i}(\boldsymbol{z})} 
        \var{\boldsymbol{X}_{k-1,j,i}} }  \\
       & = \sum_{i=1}^{p} \ev{\phi_{k-1,i}(\boldsymbol{z}) \mathsf{\Sigma}_i} = \boldsymbol{\varepsilon}(\boldsymbol{z})\odot\boldsymbol{\mathsf{\Sigma}},
    \end{align*}
     and
    \begin{align*}
        \var{\evcond{\sum_{i=1}^{p} \sum_{j=1}^{\phi_{k-1,i}(\boldsymbol{z})} \boldsymbol{X}_{k-1,j,i}}{\boldsymbol{\phi}_{k-1}(\boldsymbol{z})}} 
        &= \var{\sum_{i=1}^{p} \phi_{k-1,i}(\boldsymbol{z})\boldsymbol{m}_{i}} =  \var{\mathsf{m}\boldsymbol{\phi}_{k-1}(\boldsymbol{z})}\\
        &= \mathsf{m}\var{\boldsymbol{\phi}_{k-1}(\boldsymbol{z})}\mathsf{m}^\top = \mathsf{m}\mathsf{\Gamma}(\boldsymbol{z})\mathsf{m}^\top.
    \end{align*}
   This yields \eqref{eq-conditional-variance}.   
\end{proof}

From now on, we assume that there exist a matrix
 $\mathsf{\Lambda}\in\R^{p\times p}$ and a function $\boldsymbol{h}: \Z_+^p
  \to\R^p$ with $\|\boldsymbol{h}(\boldsymbol{z})\| =  \oo(\|\bz\|)$ as $\|\boldsymbol{z}\|\to\infty$ such that
\begin{align}\label{cond_epsilon_z}
    \boldsymbol{\varepsilon}(\boldsymbol{z}) &= \mathsf{\Lambda}\boldsymbol{z} + \boldsymbol{h}(\boldsymbol{z}), \qquad \boldsymbol{z}\in\Z_+^p.
\end{align}
Roughly speaking, the assumption \eqref{cond_epsilon_z} means that the average quantity of ancestors can be expressed as a linear map of the number of individuals affected by a minor perturbation, which becomes insignificant (negligible) in comparison with the population size when this latter is large enough. It is worth noting that migration of parents, both emigration and immigration, is permitted under (\ref{cond_epsilon_z})
(see, for instance, part (iii) of Example \ref{Example1} assuming  that 
  $\|\ev{\boldsymbol{M}_0(\bz)}\| = \oo(\|\bz\|)$ as $\|\bz\|\to\infty$).
In the single-type case, i.e., in case of $p=1$, 
a  corresponding condition has already been assumed (see, e.g.,  
 the condition (i) in Theorems 1 and 2 in
Gonz\'alez et al.\ \cite{GONZALEZ_MOLINA_DEL-PUERTO_2005}).
In the multi-type case, the assumption \eqref{cond_epsilon_z} was also considered in  Gonz\'alez and del Puerto 
 \cite[condition (11.2)]{GONZALEZ_DEL-PUERTO_2010}
 and in Gonz\'alez et al.\ \cite[condition (4.2)]{gonzalez_martinez_mota_2006}, but only with a diagonal matrix $\mathsf{\Lambda}$ having 
 non-negative entries.
 
Using \eqref{eq-conditional-expected-value} and \eqref{cond_epsilon_z},  we get
 \begin{align}\label{formula_cond_exp_hyp}
    \evcond{\boldsymbol{Z}_{k}}{\mathcal{F}_{k-1}} = \mathsf{m}\mathsf{\Lambda}\boldsymbol{Z}_{k-1} + \mathsf{m}\boldsymbol{h}(\boldsymbol{Z}_{k-1}), \qquad k\in\N.
 \end{align}

We can introduce a classification for a subclass of CMBPs  satisfying  \eqref{formula_cond_exp_hyp} based on the  asymptotic behaviour of 
$\ev{\boldsymbol{Z}_k}$ as $k\to\infty$.
Taking expectations of both sides of \eqref{formula_cond_exp_hyp}, by 
 a recursive argument, one can derive a formula for $\ev{\boldsymbol{Z}_k}$,
 $k\in\N$ (see \eqref{help_Exp_Z_k_rec} in Appendix \ref{appendix}). 
 From this, one can see that in the description of the asymptotic behaviour of 
 $\ev{\boldsymbol{Z}_k}$ as $k\to\infty$, the powers of the matrix 
 $\tilde{\mathsf{m}}  \colonequals  \mathsf{m}\mathsf{\Lambda}$
 play a crucial role. 
We assume that $\Tilde{\mathsf{m}}\in\R_+^{p\times p}$ and it has only one eigenvalue  of maximum modulus $\tilde{\rho}$ with algebraic and geometric multiplicities equal one. 
 By the spectral theory of matrices, 
 the asymptotic behaviour of $(\mathsf{m}\mathsf{\Lambda})^k$ as $k\to\infty$ is determined by $\tilde{\rho}$, the other eigenvalues do not come into play.
Motivated by this,  by definition, we say that  such a CMBP is subcritical if $\tilde{\rho}<1$, critical if $\tilde{\rho}=1$, and supercritical if $\tilde{\rho}>1$.  It is important to highlight that whenever the matrix $\tilde{\mathsf{m}}$ is primitive 
(i.e., there exists $n\in\N$ such that 
all the entries of $\tilde{\mathsf{m}}^n$ are positive), 
the Perron--Frobenius theorem (see also Lemma \ref{lemma-primitive}) guarantees the unique existence of $\tilde{\rho}$. Moreover, the usual classification of multi-type branching processes without and with immigration 
coincides with the one proposed here (see, e.g., Athreya and Ney \cite[Chapter V, page 186]{Athreya-Ney} and Kaplan \cite[page 948]{kaplan}).
  
\section{Results}\label{Section_Results}

Let $(\bZ_k)_{k\in\Z_+}$ be a controlled $p$--type branching process  given in \eqref{CMBP_def}.
Let us introduce the following hypotheses.

\begin{hyp}\label{hyp-second-moment-Z0}
    $\ev{\|\bZ_0\|^2}$, $\ev{\|\boldsymbol{X}_{0,1,i}\|^4}$ and $\ev{\|\boldsymbol{\phi}_{0}(\boldsymbol{z})\|^4}$ are finite for each $i\in\{1,\ldots,p\}$ and $\bz\in\Z_+^p$.
\end{hyp}

\begin{hyp}\label{hyp-g}
    There exist $\mathsf{\Lambda}\in\R^{p\times p}$, $\boldsymbol{\alpha}\in\R^p$ and a function $\boldsymbol{g}:\Z_+^p\to\R^p$
     with $\|\boldsymbol{g}(\boldsymbol{z})\| = \oo(1)$ as $\|\boldsymbol{z}\|\to\infty$
     such that
    \begin{equation*}
        \boldsymbol{\varepsilon}(\boldsymbol{z}) = \mathsf{\Lambda}\boldsymbol{z} + \boldsymbol{\alpha} + \boldsymbol{g}(\boldsymbol{z}), \qquad \boldsymbol{z}\in\Z_+^p.
    \end{equation*}
\end{hyp}

\begin{hyp}\label{hyp-variance-control-o}
    $\|\mathsf{\Gamma}(\boldsymbol{z})\|=\oo(\|\boldsymbol{z}\|)$ as $\|\boldsymbol{z}\|\to\infty .$
\end{hyp}

\begin{hyp}\label{hyp-moment-4}
    $\kappa_i(\boldsymbol{z}) =\OO(\|\boldsymbol{z}\|^2)$ as $\|\boldsymbol{z}\|\to\infty $ for $i=1,\ldots,p.$
\end{hyp}

\begin{hyp}\label{hyp-primitive-relaxed}
    The matrix $\tilde{\mathsf{m}} := \mathsf{m}\mathsf{\Lambda}$  belongs to $\R_+^{p\times p}$ and
        \begin{enumerate}[label=(\alph*)]
            \item $\tilde{\rho}\colonequals 1$ is an eigenvalue of $\tilde{\mathsf{m}}$  having algebraic and geometric multiplicities $1$, and the absolute values of the other eigenvalues of $\tilde{\mathsf{m}}$ are less than $1$, 
            \item there exist a unique right eigenvector $\Tilde{\boldsymbol{u}}\in\R_{+}^p$ and a unique left eigenvector $\Tilde{\boldsymbol{v}}\in\R_{+}^p$ corresponding to $\tilde{\rho} = 1$ such that $\Tilde{u}_1 + \ldots + \Tilde{u}_p = 1$, $\mathsf{\Lambda}\Tilde{\boldsymbol{u}}\in\R_{+}^p$ and $\Tilde{\boldsymbol{v}}^\top \Tilde{\boldsymbol{u}} = 1$, \label{hyp-eigenvectors}
            \item $\lim_{k\to\infty} \tilde{\mathsf{m}}^k = \Tilde{\mathsf{\Pi}} $ and there exist $\Tilde{c}\in\R_{++}$ and $\Tilde{r}\in(0,1)$ such that $\|\tilde{\mathsf{m}}^k - \Tilde{\mathsf{\Pi}}\|\leq \Tilde{c}\Tilde{r}^k$ for each $k\in\N$, where $\Tilde{\mathsf{\Pi}} \colonequals \Tilde{\boldsymbol{u}}\Tilde{\boldsymbol{v}}^\top $. \label{hyp-pi-tilde}
        \end{enumerate}
\end{hyp}

\begin{hyp}\label{hyp-explosion}
For all $\epsilon>0$ and $B>0$, there exists $k_0(\epsilon,B)\in\N$ such that 
$\prob{\Vert \bZ_k \Vert\leq B }\leq \epsilon$ for each $k\geq k_0(\epsilon,B)$, $k\in\N$. 
\end{hyp}

\begin{remark}\label{remark-hyp}
\hfill
\begin{enumerate}[label=(\roman*)]
    \item In Hypotheses \ref{hyp-second-moment-Z0} and \ref{hyp-moment-4}, 
     the fourth order moment assumptions for the offspring and control distributions are used in the proofs only for checking the conditional
      Lindeberg condition, namely,  condition (iii) of Theorem \ref{thm-ispany-pap}, in order to prove convergence of some random step processes toward a diffusion process.
      For critical  single-type CBPs, assuming that the explosion set has probability one, a detailed exposition of a proof of the conditional Lindeberg condition in question under second order moment assumptions can be found in Gonz\'alez et al.\ \cite{GONZALEZ_MARTIN-CHAVEZ_DEL-PUERTO_2022}.

    \item If Hypothesis \ref{hyp-g} holds for $\boldsymbol{\phi}_0(\boldsymbol{z})$, $\bz\in\Z_+^p$, then it also holds for any  of its linear transformations  $\mathsf{B}\boldsymbol{\phi}_0(\boldsymbol{z}) + \boldsymbol{\beta}$, 
    $\bz\in\bZ_+^p$, with $\mathsf{B}\in\R^{p\times p}$ and $\boldsymbol{\beta}\in\R^p$, by replacing  $\mathsf{\Lambda}$, $\boldsymbol{\alpha}$ and $\boldsymbol{g}$ with $\mathsf{B\Lambda}$, $\mathsf{B}\boldsymbol{\alpha} + \boldsymbol{\beta}$ and $\mathsf{B}\boldsymbol{g}$, respectively.
    Note that, in Hypothesis \ref{hyp-g}, $\boldsymbol{\alpha}\in\R^p$ can have negative coordinates as well, for an example, see part (i) of Remark \ref{rem-abs-state}.
    
    \item \label{rem-primitive} Taking into account Lemma \ref{lemma-primitive}, we have that Hypothesis \ref{hyp-primitive-relaxed} holds if the matrix $\tilde{\mathsf{m}}$ is primitive, its Perron--Frobenius eigenvalue $\Tilde{\rho}$  equals $1$, and $\mathsf{\Lambda} \Tilde{\boldsymbol{u}}\in\R_{+}^p$, where $\Tilde{\boldsymbol{u}}$ is the right Perron--Frobenius eigenvector of $\tilde{\mathsf{m}}$ corresponding to $\Tilde{\rho}$. Moreover, notice that if $\mathsf{m}$ is primitive, then a set of sufficient conditions for $\Tilde{\mathsf{m}}$ to be primitive is that $\mathsf{\Lambda}\in\R_{++}^{p\times p}$ and that $\mathsf{m}$ and $\mathsf{\Lambda}$ commute.  Note also that if $\mathsf{\Lambda}\in\R_+^{p\times p}$,
    then $\Tilde{\mathsf{m}}\in\R_+^{p\times p}$ and $\mathsf{\Lambda}\boldsymbol{x}\in\R_+^p$ for all $\boldsymbol{x}\in\R_+^p$.

    \item     A sufficient condition for Hypothesis \ref{hyp-explosion} is the almost sure explosion of the process, i.e. $\prob{\|\boldsymbol{Z}_{k}\| \to\infty  \text{ as } k\to\infty} = 1$.
    Indeed,
    for all $B>0$, we have
    \begin{equation*}
        \{ \|\boldsymbol{Z}_{k}\| 
        \to\infty  \text{ as } k\to\infty \} \subset  \bigcup_{k=0}^\infty \bigcap_{n = k}^{\infty} \{\|\boldsymbol{Z}_{n}\| > B \}.
    \end{equation*}
    Therefore, by the continuity of probability, we have for all $B>0$,
    \begin{equation*}
        1 = \prob{ \bigcup_{k=0}^\infty \bigcap_{n = k}^{\infty} \{\|\boldsymbol{Z}_{n}\| > B \}} = \lim_{k\to\infty} \prob{\bigcap_{n = k}^{\infty} \{\|\boldsymbol{Z}_{n}\| > B \}}
        \leq \liminf_{k\to\infty} \prob{\|\boldsymbol{Z}_k\| > B},
    \end{equation*} 
    yielding that $\lim_{k\to\infty} \prob{\|\boldsymbol{Z}_k\| > B} =1$ for all $B>0$, or equivalently 
    $\lim_{k\to\infty} \prob{\|\boldsymbol{Z}_k\| \leq B} =0$ for all $B>0$.
    Consequently, Hypothesis \ref{hyp-explosion} 
 holds.
  It is also interesting to notice that Hypothesis \ref{hyp-explosion} can not be derived from Hypotheses \ref{hyp-second-moment-Z0}--\ref{hyp-primitive-relaxed}, see 
  part (ii) of Remark \ref{rem-abs-state}  for a nontrivial example, where Hypotheses \ref{hyp-second-moment-Z0}--\ref{hyp-primitive-relaxed} hold, but Hypothesis \ref{hyp-explosion} does not.
    
    \item In case of $\boldsymbol{g}\equiv  \boldsymbol{0}_p$, 
     Hypothesis \ref{hyp-explosion} is not needed  for our forthcoming main Theorem \ref{Thm_main}.
    In fact, in the proof of Theorem \ref{Thm_main} it will be seen that Hypothesis \ref{hyp-explosion} is only used  for deriving \eqref{hyp-explosion-use-1} and \eqref{hyp-explosion-use-2},  which are trivially satisfied for 
    $\boldsymbol{g}\equiv  \boldsymbol{0}_p$.
    \label{rem-explosion}
    \proofendb
\end{enumerate}
\end{remark}

Next, we give an example for a controlled $p$--type branching process for which the Hypotheses \ref{hyp-second-moment-Z0}--\ref{hyp-primitive-relaxed} hold.

\begin{example}
    Let us consider a controlled $p$--type branching process with $\ev{\|\bZ_0\|^2}<\infty$, $\ev{\|\boldsymbol{X}_{0,1,i}\|^4}<\infty$, $i\in\{1,\ldots,p\}$, and  
 $$ \phi_{k,i}(\bz) =   z_i +  U_{k,i}\1_{\{z_i > 0\}},
       \qquad k\in\Z_+,\quad \bz\in\Z_+^p,\quad i\in\{1,\ldots,p\},
 $$
where $\{U_{k,i} : k\in\Z_+, i\in\{1,\ldots,p\}\}$ 
 are independent and identically distributed $\Z_+^p$--valued random variables
 such that their common distribution is the uniform distribution on the set $\{-1,0,1\}$, and $\1_A$ denotes the indicator function of a set $A$.
The interpretation is as follows. Whenever there are $i$--type individuals, 
 there is a migratory component $U_{k,i}$ that allows the emigration/immigration 
 of one $i$--type progenitor or there could be no migration
 with the same probabilities $\frac{1}{3}$. 
 This is a particular case of model (iii) in Example \ref{Example1}. 
Hypothesis \ref{hyp-second-moment-Z0} is trivially fulfilled and Hypotheses 
 \ref{hyp-g}, \ref{hyp-variance-control-o} and \ref{hyp-moment-4} hold with $\mathsf{\Lambda}=\mathsf{I}_p,$ $\boldsymbol{\alpha}= \boldsymbol{0}_p$ and $\boldsymbol{g}\equiv \boldsymbol{0}_p$,  since, for each $\boldsymbol{z}\in\Z_+^p$, we have
 \[
     \mathsf{\Gamma}(\boldsymbol{z})
       = \Big( \cov{U_{0,i}}{U_{0,j}} \1_{\{z_i > 0,\, z_j >0\}}\Big)_{i,j=1}^p
       = \Big( \frac{2}{3} \delta_{i,j} \1_{\{z_i > 0,\, z_j >0\}}\Big)_{i,j=1}^p,
 \]
 and 
 \[
 \kappa_i(\boldsymbol{z}) = \ev{U_{0,i}^4 \1_{\{z_i > 0\} } }
                          = \frac{2}{3}\1_{\{z_i > 0\} },
 \]
 yielding that $\Vert\mathsf{\Gamma}(\boldsymbol{z})\Vert\leq \frac{2}{3}$ and 
 $\kappa_i(\boldsymbol{z})\leq \frac{2}{3}$ for $\bz\in\Z_+^p$.
Moreover, taking into account parts \ref{rem-primitive} and \ref{rem-explosion} of Remark \ref{remark-hyp}, if $\mathsf{m}$ is primitive with Perron--Frobenius  eigenvalue 1, then Hypothesis \ref{hyp-primitive-relaxed} is also satisfied, and  Hypothesis \ref{hyp-explosion} is not needed.
\proofendb
\end{example}

Let $\mathbf{C}(\R_+,\R^p)$ be the space of $\R^p$--valued continuous functions on $\R_+$ and let us denote by $\mathbf{D}(\R_+,\R^p)$ the space of $\R^p$--valued càdlàg functions on $\R_+$ and by $\mathcal{D}_\infty (\R_+,\R^p)$ its Borel $\sigma$--algebra corresponding to the metric defined in equation (1.26) {in Jacod and Shiryaev} \cite[Chapter VI]{JACOD_SHIRYAEV_2003}. For $\R^p$--valued stochastic processes with càdlàg paths $(\boldsymbol{\mathcal{Y}}_t^{(n)})_{t\in\R_+}$, $n\in\N$, and $(\boldsymbol{\mathcal{Y}}_t)_{t\in\R_+}$, if the distribution of $(\boldsymbol{\mathcal{Y}}_t^{(n)})_{t\in\R_+}$ on the space $\left(\mathbf{D}(\R_+,\R^p),\mathcal{D}_\infty (\R_+,\R^p)\right)$ converges weakly as $n\to\infty$ to the distribution of  $(\boldsymbol{\mathcal{Y}}_t)_{t\in\R_+}$ on the same space, then we use the notation $(\boldsymbol{\mathcal{Y}}_t^{(n)})_{t\in\R_+} \stackrel{\mathcal{L}}{\longrightarrow} (\boldsymbol{\mathcal{Y}}_t)_{t\in\R_+}$ as $n\to\infty$.

Let us define a sequence of random step processes $(\boldsymbol{\mathcal{Z}}_t^{(n)})_{t\in\R_+}$, $n\in\N,$ using the 
 controlled $p$--type branching process $(\bZ_k)_{k\in\Z_+}$, as
\begin{equation*}
    \boldsymbol{\mathcal{Z}}_t^{(n)}  \colonequals  n^{-1} \boldsymbol{Z}_{\lfloor nt \rfloor}, \qquad t\in\R_+,\quad n\in\N.
\end{equation*}

Recall that, under Hypothesis \ref{hyp-primitive-relaxed}, 
 $\tilde{\boldsymbol{u}}, \tilde{\boldsymbol{v}}\in\R_+^p$ are
 the right and left eigenvectors of $\tilde{\mathsf{m}}$ corresponding to  
  eigenvalue $\tilde{\rho}=1$, respectively.

\begin{theorem}\label{Thm_main}
    Suppose that Hypotheses \ref{hyp-second-moment-Z0}--\ref{hyp-explosion} hold for the CMBP $(\boldsymbol{Z}_{k})_{k\in\Z_+}$ given in \eqref{CMBP_def}. 
    Then
    \begin{equation}\label{eq-convergence-main-thm}
        (\boldsymbol{\mathcal{Z}}_t^{(n)})_{t\in\R_+} 
            \stackrel{\mathcal{L}}{\longrightarrow} 
          (\mathcal{Z}_t\tilde{\boldsymbol{u}})_{t\in\R_+}
          \qquad \text{as } n\to\infty,
    \end{equation}
    where $(\mathcal{Z}_t)_{t\in\R_+}$ is the pathwise unique strong solution of the SDE
  \begin{equation}\label{eq-SDE-main-thm-1-dim}
    \dif\mathcal{Z}_t = \tilde{\boldsymbol{v}}^\top \mathsf{m}\boldsymbol{\alpha}\dif t + \sqrt{\tilde{\boldsymbol{v}}^\top ((\mathsf{\Lambda}\tilde{\boldsymbol{u}})\odot\boldsymbol{\mathsf{\Sigma}})\tilde{\boldsymbol{v}}\mathcal{Z}_t^+}\dif\mathcal{W}_t, \qquad t\in\R_+,
  \end{equation}
  with initial value 0, where $(\mathcal{W}_t)_{t\in\R_+}$ is a standard Wiener process.
\end{theorem}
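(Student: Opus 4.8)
The plan is to decompose the process into a predictable drift, governed by the conditional moments of Proposition~\ref{Pro_cond_moment}, and a martingale part, and then to run the functional martingale limit theorem (Theorem~\ref{thm-ispany-pap}) after showing that the $n^{-1}$-scaled dynamics collapse onto the ray $\R_+\tilde{\boldsymbol{u}}$. Concretely, I would set $\boldsymbol{M}_k \colonequals \boldsymbol{Z}_k - \evcond{\boldsymbol{Z}_k}{\mathcal{F}_{k-1}}$, so that \eqref{eq-conditional-expected-value} together with Hypothesis~\ref{hyp-g} gives the recursion
\begin{equation*}
  \boldsymbol{Z}_k = \tilde{\mathsf{m}}\boldsymbol{Z}_{k-1} + \mathsf{m}\boldsymbol{\alpha} + \mathsf{m}\boldsymbol{g}(\boldsymbol{Z}_{k-1}) + \boldsymbol{M}_k, \qquad k\in\N,
\end{equation*}
where $(\boldsymbol{M}_k)_{k\in\N}$ are $\mathcal{F}_k$-martingale differences whose conditional covariance is \eqref{eq-conditional-variance}. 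The preparatory task is to record from Lemma~\ref{lemma-moments} that $\ev{\|\boldsymbol{Z}_k\|} = \OO(k)$ and that the second and fourth conditional moments of $\boldsymbol{M}_k$ are $\OO(\|\boldsymbol{Z}_{k-1}\|)$ and $\OO(\|\boldsymbol{Z}_{k-1}\|^2)$; these bounds drive every subsequent estimate.

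The main obstacle, and the genuinely new ingredient relative to the MBPI case, is the collapse onto the ray. I would iterate the recursion and project onto the complementary invariant subspace via $\mathsf{I}_p - \tilde{\mathsf{\Pi}}$, with $\tilde{\mathsf{\Pi}} = \tilde{\boldsymbol{u}}\tilde{\boldsymbol{v}}^\top$. Since $\tilde{\mathsf{\Pi}}\tilde{\mathsf{m}} = \tilde{\mathsf{m}}\tilde{\mathsf{\Pi}} = \tilde{\mathsf{\Pi}}$, the transversal component $(\mathsf{I}_p - \tilde{\mathsf{\Pi}})\boldsymbol{Z}_k$ becomes a geometrically weighted sum of the contributions of $\boldsymbol{Z}_0$, $\mathsf{m}\boldsymbol{\alpha}$, $\mathsf{m}\boldsymbol{g}(\boldsymbol{Z}_{j-1})$ and $\boldsymbol{M}_j$, with weights controlled by the exponential spectral-gap bound $\|\tilde{\mathsf{m}}^{k} - \tilde{\mathsf{\Pi}}\| \le \tilde{c}\tilde{r}^{k}$ from part~\ref{hyp-pi-tilde} of Hypothesis~\ref{hyp-primitive-relaxed}. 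The bounded contributions are $\OO(1)$, while the martingale contribution is $\OO(\sqrt{k})$ in $L^2$, because orthogonality of the $\boldsymbol{M}_j$ and the geometric weights localize the variance near $j = k$, where $\ev{\|\boldsymbol{M}_j\|^2} = \OO(j)$. Dividing by $n$ and setting $k = \lfloor nt\rfloor$, and upgrading to local uniformity in $t$ through a Doob maximal inequality, I obtain that $\boldsymbol{\mathcal{Z}}_t^{(n)} - (\tilde{\boldsymbol{v}}^\top\boldsymbol{\mathcal{Z}}_t^{(n)})\tilde{\boldsymbol{u}} \stackrel{\mathrm{P}}{\longrightarrow} \boldsymbol{0}_p$ locally uniformly. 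This is precisely what permits replacing $\boldsymbol{Z}_{k-1}$ by $(\tilde{\boldsymbol{v}}^\top\boldsymbol{Z}_{k-1})\tilde{\boldsymbol{u}}$ inside the conditional covariance. I stress there is no circularity: the collapse uses only the $L^2$-bounds of Lemma~\ref{lemma-moments}, not the martingale central limit theorem itself.

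With the collapse available, I would analyse the scalar critical coordinate $\mathcal{Z}_t^{(n)} \colonequals n^{-1}\tilde{\boldsymbol{v}}^\top\boldsymbol{Z}_{\lfloor nt\rfloor}$. Left-multiplying the recursion by $\tilde{\boldsymbol{v}}^\top$ and using $\tilde{\boldsymbol{v}}^\top\tilde{\mathsf{m}} = \tilde{\boldsymbol{v}}^\top$ telescopes the deterministic part into the linear drift $t\,\tilde{\boldsymbol{v}}^\top\mathsf{m}\boldsymbol{\alpha}$, whereas the additive-perturbation term $n^{-1}\sum_{k=1}^{\lfloor nt\rfloor}\tilde{\boldsymbol{v}}^\top\mathsf{m}\boldsymbol{g}(\boldsymbol{Z}_{k-1})$ has to be shown to vanish; this is where the perturbation forces extra work, handled by combining $\|\boldsymbol{g}(\boldsymbol{z})\| = \oo(1)$ with Hypothesis~\ref{hyp-explosion} (so $\|\boldsymbol{Z}_{k-1}\|$ is large with high probability for large $k$) through a Cesàro-type estimate. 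For the martingale part I would apply Theorem~\ref{thm-ispany-pap} to the step process built from $\tilde{\boldsymbol{v}}^\top\boldsymbol{M}_k$: the conditional Lindeberg condition follows from the fourth-moment bound in Lemma~\ref{lemma-moments} together with Hypotheses~\ref{hyp-second-moment-Z0} and \ref{hyp-moment-4} via a Chebyshev estimate, and, using the collapse together with Hypothesis~\ref{hyp-variance-control-o} (which makes the $\mathsf{m}\mathsf{\Gamma}(\boldsymbol{Z}_{k-1})\mathsf{m}^\top$ term negligible after scaling), the sum of conditional variances converges to $\tilde{\boldsymbol{v}}^\top((\mathsf{\Lambda}\tilde{\boldsymbol{u}})\odot\boldsymbol{\mathsf{\Sigma}})\tilde{\boldsymbol{v}}\int_0^t\mathcal{Z}_s^{+}\dif s$.

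These ingredients identify the weak limit of $\mathcal{Z}^{(n)}$ as the pathwise unique strong solution of \eqref{eq-SDE-main-thm-1-dim}. Finally, writing $\boldsymbol{\mathcal{Z}}^{(n)}$ as the collapse approximation $\mathcal{Z}^{(n)}\tilde{\boldsymbol{u}}$ plus a term vanishing in probability and invoking the continuous mapping theorem (Theorem~\ref{cont-map-thm}) upgrades the scalar convergence to the asserted $(\boldsymbol{\mathcal{Z}}_t^{(n)})_{t\in\R_+} \stackrel{\mathcal{L}}{\longrightarrow} (\mathcal{Z}_t\tilde{\boldsymbol{u}})_{t\in\R_+}$. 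I expect the collapse onto the ray to be the delicate point, since the lack of the additive property means the transversal fluctuations must be tamed purely through the spectral gap and the martingale $L^2$-bounds rather than through any exact branching identity, and the interaction of this collapse with the $\boldsymbol{g}$-perturbation in both the drift and the conditional-variance limit is where the argument departs most sharply from the MBPI treatment of Ispány and Pap.
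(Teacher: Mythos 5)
Your proposal is structurally sound in most places, but it takes a genuinely different route from the paper, and one step of that route is not justified as written.

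On the comparison: the paper applies Theorem \ref{thm-ispany-pap} to the full $p$--dimensional array $n^{-1}\boldsymbol{M}_k$, proving \eqref{eq-conv-M}, i.e.\ weak convergence of $\boldsymbol{\mathcal{M}}^{(n)}$ to the $p$--dimensional diffusion \eqref{eq-SDE-M}, and then recovers $\boldsymbol{\mathcal{Z}}^{(n)} = \boldsymbol{\Psi}_n(\boldsymbol{\mathcal{M}}^{(n)}) + \boldsymbol{\mathcal{V}}^{(n)}$ via the continuous-mapping result Theorem \ref{cont-map-thm} applied to the deterministic maps \eqref{help_psi_n} and \eqref{help_psi}. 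In that scheme the concentration on the ray $\R_+\tilde{\boldsymbol{u}}$ is automatic, because the limit map $\boldsymbol{\Psi}$ has the projector $\tilde{\mathsf{\Pi}}=\tilde{\boldsymbol{u}}\tilde{\boldsymbol{v}}^\top$ built in; the only control of the transversal component $(\mathsf{I}_p-\tilde{\mathsf{\Pi}})\bZ_k$ the paper ever needs is the time-averaged estimate \eqref{eq-cond-aux-1}, which follows from pointwise moment bounds. You invert this order: you prove a pathwise collapse $\sup_{t\le T}\|n^{-1}(\mathsf{I}_p-\tilde{\mathsf{\Pi}})\bZ_{\lfloor nt\rfloor}\|\stackrel{\mathrm{P}}{\longrightarrow}0$ first, reduce to the scalar coordinate $\tilde{\boldsymbol{v}}^\top\bZ_k$ (where $\tilde{\boldsymbol{v}}^\top\tilde{\mathsf{m}}=\tilde{\boldsymbol{v}}^\top$ makes the telescoping exact and Theorem \ref{thm-ispany-pap} is applied one-dimensionally), and reassemble at the end. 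This is a legitimate plan: it avoids the measurability/localization work for $\boldsymbol{\Psi}_n$ in Step 2 and the $p$--dimensional uniqueness discussion for \eqref{eq-SDE-M}; your treatment of the drift, of the $\boldsymbol{g}$--term via Hypothesis \ref{hyp-explosion} (matching \eqref{hyp-explosion-use-2}), of the $\mathsf{\Gamma}$--term via Hypothesis \ref{hyp-variance-control-o}, and of the Lindeberg condition all parallel the paper's Steps 1/C, 1/D and 3 correctly. Note also that for the conditional-variance condition (ii) you only need the collapse in the summed sense $n^{-2}\sum_{k\le nT}\|(\mathsf{I}_p-\tilde{\mathsf{\Pi}})\bZ_k\|\stackrel{\mathrm{P}}{\longrightarrow}0$, which your pointwise $L^2$ bound of order $\sqrt{k}$ already delivers.

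The genuine gap is the \emph{uniform} collapse, which your final reassembly step cannot do without. By \eqref{help_Z_n_k}, $(\mathsf{I}_p-\tilde{\mathsf{\Pi}})\bZ_k = (\tilde{\mathsf{m}}^k-\tilde{\mathsf{\Pi}})\bZ_0 + \sum_{j=1}^k(\tilde{\mathsf{m}}^{k-j}-\tilde{\mathsf{\Pi}})\big(\boldsymbol{M}_j+\mathsf{m}\boldsymbol{\alpha}+\mathsf{m}\boldsymbol{g}(\bZ_{j-1})\big)$, and the weighted sums $T_k:=\sum_{j=1}^k(\tilde{\mathsf{m}}^{k-j}-\tilde{\mathsf{\Pi}})\boldsymbol{M}_j$ are \emph{not} a martingale in $k$ (the weights are re-indexed at every step), so Doob's maximal inequality does not apply to them. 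The two obvious repairs also fail: summation by parts gives $T_k=(\mathsf{I}_p-\tilde{\mathsf{\Pi}})S_k+\sum_{j=1}^{k-1}(\tilde{\mathsf{m}}^{k-j}-\tilde{\mathsf{m}}^{k-j-1})S_j$ with $S_m=\sum_{j\le m}\boldsymbol{M}_j$, hence $\max_{k\le nT}\|T_k\|\le C\max_{m\le nT}\|S_m\|$, but by \eqref{eq-O-mart-diff-1} we have $\ev{\|S_m\|^2}=\OO(m^2)$ and this order is sharp, so Doob applied to $S$ only yields a bound of order $n$, useless after dividing by $n$; and Chebyshev with your pointwise bound $\ev{\|T_k\|^2}=\OO(k)$ plus a union bound over $k\le nT$ gives $\sum_{k\le nT}\OO(k)/(\delta n)^2=\OO(T^2/\delta^2)$, bounded but not small. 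The statement is nevertheless true and can be proved with the fourth moments already available: for fixed $k$ the process $m\mapsto\sum_{j\le m}(\tilde{\mathsf{m}}^{k-j}-\tilde{\mathsf{\Pi}})\boldsymbol{M}_j$ \emph{is} a martingale, and a Rosenthal-type inequality combined with $\evcond{\|\boldsymbol{M}_j\|^2}{\mathcal{F}_{j-1}}=\OO(1+\|\bZ_{j-1}\|)$ (from \eqref{eq-conditional-variance} and Hypotheses \ref{hyp-g}, \ref{hyp-variance-control-o}), with \eqref{eq-big-O-second-moment} and with \eqref{eq-O-mart-diff-4}, yields $\ev{\|T_k\|^4}=\OO(k^2)$; then $\prob{\max_{k\le nT}\|T_k\|>\delta n}\le\sum_{k\le nT}\OO(k^2)/(\delta n)^4=\OO(1/n)\to0$. (Alternatively, split $T_k$ at a fixed lag $L$, bounding the old part by $\OO(\tilde{r}^L)\max_m\|S_m\|$ and the recent part by $L\max_{j\le nT}\|\boldsymbol{M}_j\|$, the latter again requiring \eqref{eq-O-mart-diff-4}.) Without some such argument, your last step — passing from the scalar convergence to \eqref{eq-convergence-main-thm} — is unsupported; with it, your proof goes through.
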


\begin{remark}\label{rem-unique}
 The coefficient functions $\R \ni x\mapsto \tilde{\boldsymbol{v}}^\top \mathsf{m}\boldsymbol{\alpha} \in \R$ and $\R \ni x\mapsto \sqrt{\tilde{\boldsymbol{v}}^\top ((\mathsf{\Lambda}\tilde{\boldsymbol{u}})\odot\boldsymbol{\mathsf{\Sigma}})\tilde{\boldsymbol{v}}x^+} \in \R$ satisfy the conditions of Theorem 1 in Yamada and Watanabe \cite{Yamada_Watanabe_1971}, so pathwise uniqueness is guaranteed for SDE \eqref{eq-SDE-main-thm-1-dim}. Indeed, $\int_0^\epsilon u^{-1} \dif u = \infty$ for all $\epsilon>0$ and square root is a strictly increasing non-negative function vanishing at zero such that $|\sqrt{x}-\sqrt{y}|\leq \sqrt{|x-y|}$ for all $x,y\in\R_+$. Moreover, if $\tilde{\boldsymbol{v}}^\top \mathsf{m}\boldsymbol{\alpha}\in\R_+$ (respectively, $\tilde{\boldsymbol{v}}^\top \mathsf{m}\boldsymbol{\alpha}>0$), then $\mathcal{Z}_t$ is non-negative (respectively, positive) for all $t>0$ by the comparison theorem (see, e.g., Revuz and Yor \cite[Chapter IX, Theorem 3.7]{Revuz_Yor_1998}) and, consequently, $\mathcal{Z}_t^+$ in \eqref{eq-SDE-main-thm-1-dim} may be replaced by $\mathcal{Z}_t$.
Indeed, in case of $\tilde{\boldsymbol{v}}^\top ((\mathsf{\Lambda}\tilde{\boldsymbol{u}})\odot\boldsymbol{\mathsf{\Sigma}})\tilde{\boldsymbol{v}}=0$, we have $\mathcal{Z}_t=\tilde{\boldsymbol{v}}^\top \mathsf{m}\boldsymbol{\alpha}\,t$, $t\in\R_+$, and in case of
$\tilde{\boldsymbol{v}}^\top ((\mathsf{\Lambda}\tilde{\boldsymbol{u}})\odot\boldsymbol{\mathsf{\Sigma}})\tilde{\boldsymbol{v}}>0$ and $t>0$,
 we have $\mathcal{Z}_t$ has a Gamma distribution with parameters
 $2\tilde{\boldsymbol{v}}^\top \mathsf{m}\boldsymbol{\alpha}/(\tilde{\boldsymbol{v}}^\top ((\mathsf{\Lambda}\tilde{\boldsymbol{u}})\odot\boldsymbol{\mathsf{\Sigma}})\tilde{\boldsymbol{v}})$ and
 $2/(\tilde{\boldsymbol{v}}^\top ((\mathsf{\Lambda}\tilde{\boldsymbol{u}})\odot\boldsymbol{\mathsf{\Sigma}})\tilde{\boldsymbol{v}} \, t)$
 (following, e.g., from Ikeda and Watanabe \cite[Chapter IV, Example 8.2]{IkeWat}).
\proofendb
\end{remark}

\begin{remark}\label{rem-limit-process-multi}
 \hfill
\begin{enumerate}[label=(\roman*)]
  \item It turns out that the limit process in (\ref{eq-convergence-main-thm}) can also be characterized as the pathwise unique strong solution of the SDE
    \begin{equation}\label{SDE_limit_new}
        \dif \boldsymbol{\mathcal{Z}}_t = \tilde{\mathsf{\Pi}}\mathsf{m}\boldsymbol{\alpha}\dif t + \tilde{\mathsf{\Pi}}\sqrt{(\mathsf{\Lambda}\boldsymbol{\mathcal{Z}}_t)^+ \odot \boldsymbol{\mathsf{\Sigma}}} \dif \boldsymbol{\mathcal{W}}_t, \qquad t\in\R_+,
    \end{equation}
    with initial value $ \boldsymbol{0}_p$,
    where $(\boldsymbol{\mathcal{W}}_t)_{t\in\R_+}$ is a standard $p$--dimensional Wiener process (for details, see \nameref{step4} in the proof of Theorem \ref{Thm_main}).
  \item  If all the coordinates of the vector $\boldsymbol{\alpha}\in\R^p$ in Hypothesis \ref{hyp-g} are negative, then $\mathcal{Z}_t=0$, $t\in\R_+$ (where $(\mathcal{Z}_t)_{t\in\R_+}$ is the pathwise unique strong solution of the SDE \eqref{eq-SDE-main-thm-1-dim}).
  Indeed, by Remark \ref{rem-unique}, we have 
  $\mathrm{E}[{\mathcal{Z}_t}] = \tilde{\boldsymbol{v}}^\top \mathsf{m}\boldsymbol{\alpha} t \in (-\infty, 0]$ for all $t\in\R_+$.
  Since $\boldsymbol{\mathcal{Z}}_t^{(n)}\in\R_+^p$, $n\in\N$, $t\in\R_+$,
  the limit process $(\mathcal{Z}_t\tilde{\boldsymbol{u}})_{t\in\R_+}$
  in \eqref{eq-convergence-main-thm} is $\R_+^p$-valued, and, using that 
  $\tilde{\boldsymbol{u}}\in\R_+^p$, it implies that $\mathcal{Z}_t\in\R_+$, $t\in\R_+$. Consequently, if all the coordinates of the vector $\boldsymbol{\alpha}\in\R^p$ in Hypothesis \ref{hyp-g} are negative, then 
  $\tilde{\boldsymbol{v}}^\top \mathsf{m}\boldsymbol{\alpha} = 0$, 
  implying that $\mathcal{Z}_t=0$, $t\in\R_+$, as desired.
  \proofendb
 \end{enumerate} 
\end{remark}

In the  next remark, we point out that 
 Hypothesis \ref{hyp-explosion} basically excludes that
 $\prob{\boldsymbol{\phi}_0(\boldsymbol{0}_p)=\boldsymbol{0}_p}=1$, this condition appears
 for the conditional weak limit theorems 
 for one-dimensional distributions of critical CMBPs 
 in Gonz\'alez et al.\ \cite[Section 4]{gonzalez_martinez_mota_2006}. 
 Further, we also demonstrate that Hypothesis \ref{hyp-explosion} 
 is independent of Hypotheses \ref{hyp-second-moment-Z0}--\ref{hyp-primitive-relaxed}.

\begin{remark}\label{rem-abs-state}
\hfill
\begin{enumerate}[label=(\roman*)]
 \item If $\prob{\boldsymbol{\phi}_0(\boldsymbol{0}_p)=\boldsymbol{0}_p}=1$, then $\boldsymbol{\alpha}=-\boldsymbol{g}(\boldsymbol{0}_p)$ in Hypothesis \ref{hyp-g}.    
 Moreover, if $\prob{\boldsymbol{\phi}_0(\boldsymbol{0}_p)=\boldsymbol{0}_p}=1$ and $\boldsymbol{0}_p$ 
 is a state of $(\bZ_k)_{k\in\Z_+}$, i.e., there exists an $n_0\in\Z_+$ such that $\prob{\bZ_{n_0}=\boldsymbol{0}_p}>0$,  then Hypothesis \ref{hyp-explosion} does not hold.
Indeed, in this case, we have $\{\bZ_{n_0}=\boldsymbol{0}_p
 \}\subseteq \{ \bZ_k=\boldsymbol{0}_p\}$ for each 
 $k\geq n_0$, $k\in\Z_+$, yielding that $\{\bZ_{n_0}=\boldsymbol{0}_p\}\subseteq \{\Vert \bZ_k \Vert \leq B\}$
 for all $B>0$ and $k\geq n_0$, $k\in\Z_+$.
Consequently, we get $0<\prob{\bZ_{n_0}=\boldsymbol{0}_p} 
 \leq\prob{\Vert \bZ_k \Vert \leq B}$ 
 for all $B>0$ and $k\geq n_0$, $k\in\Z_+$.
By choosing $\epsilon:=\frac{1}{2}\prob{\bZ_{n_0}=
 \boldsymbol{0}_p}>0$, one can see that
 Hypothesis \ref{hyp-explosion} does not hold.
If $\prob{\boldsymbol{\phi}_0(\boldsymbol{0}_p)=
 \boldsymbol{0}_p}=1$ and $\boldsymbol{0}_p$ 
 is not a state of $(\bZ_k)_{k\in\Z_+}$, then, in principle, Hypothesis \ref{hyp-explosion} can hold.
For example, let $p\colonequals 2$, 
    \begin{align*}
   & \boldsymbol{\phi}_{k}(\boldsymbol{z})  
    \equalscolon
    \begin{cases}
       \boldsymbol{0}_2 & \text{if $\bz=\boldsymbol{0}_2$,}\\
       \begin{pmatrix}
        z_1+1  \\
        z_1+z_2-1
    \end{pmatrix} & \text{if $\bz\ne \boldsymbol{0}_2$, $\bz\in\Z_+^2$,}
    \end{cases} \qquad  k\in\Z_+,\\[1mm]
&\boldsymbol{X}_{k,j,1}  \equalscolon
    \begin{pmatrix}
        1 \\
        0 
    \end{pmatrix},
    \quad 
    \boldsymbol{X}_{k,j,2}  
    \equalscolon 
    \begin{pmatrix}
        0 \\
        0
    \end{pmatrix},
    \quad 
    \bZ_0  \equalscolon \begin{pmatrix}
        1 \\
        0
    \end{pmatrix},\qquad  k\in\Z_+,\,j\in\N.
 \end{align*}
Then $\prob{\boldsymbol{\phi}_0(\boldsymbol{0}_2)=\boldsymbol{0}_2}=1$ and
 \[
   \bZ_k=(k+1)
       \begin{pmatrix}
           1 \\
           0
        \end{pmatrix},\qquad k\in\N.
  \] 
 Hence one can easily see that Hypotheses \ref{hyp-second-moment-Z0}--\ref{hyp-explosion} hold with 
  \[
     \mathsf{\Lambda}\colonequals \begin{pmatrix}
                         1 & 0 \\
                          1 & 1 \\
                       \end{pmatrix} ,
    \qquad 
    \boldsymbol{\alpha}\colonequals \begin{pmatrix}
                             1 \\
                              -1
                          \end{pmatrix},
   \qquad 
     \boldsymbol{g}(\boldsymbol{z})\colonequals                             
         \begin{cases}
             \begin{pmatrix}
              -1 \\
               1
              \end{pmatrix}  & \text{if $\bz = \boldsymbol{0}_2$,}\\
            \boldsymbol{0}_2   & \text{if $\bz \ne \boldsymbol{0}_2$, $\bz\in\Z_+^2$,}    
         \end{cases}
  \]
  and 
  \[
  \mathsf{m} = \tilde{\mathsf{m}} \colonequals \begin{pmatrix}
                         1 & 0 \\
                         0 & 0 \\
                       \end{pmatrix} ,
   \qquad   
   \Tilde{\boldsymbol{u}}\colonequals\begin{pmatrix}
                             1 \\
                             0 \\
                           \end{pmatrix},\qquad 
     \Tilde{\boldsymbol{v}}\colonequals\begin{pmatrix}
                             1 \\
                             0 \\
                           \end{pmatrix}.                           
  \] 
  In this case, without the application of Theorem \ref{Thm_main}, we readily have that
  \[
   (\boldsymbol{\mathcal{Z}}_t^{(n)})_{t\in\R_+}
     = \left(\frac{1}{n}(\lfloor nt\rfloor +1)
                          \begin{pmatrix}
                             1 \\
                             0 \\
                           \end{pmatrix} \right)_{t\in\R_+}
  \stackrel{\mathcal{L}}{\longrightarrow}
    \left(t \begin{pmatrix}
                             1 \\
                             0 \\
            \end{pmatrix}
      \right)_{t\in\R_+} 
      \qquad \text{as } n\to\infty.
  \]
Of course, Theorem \ref{Thm_main} also gives the same result, since in this case the SDE \eqref{eq-SDE-main-thm-1-dim} takes
 the form $\dif\mathcal{Z}_t = \dif t$, $t\in\R_+$, with initial value 0, yielding that $\mathcal{Z}_t = t$, $t\in\R_+$. 

\item   We give a nontrivial example, where Hypotheses \ref{hyp-second-moment-Z0}--\ref{hyp-primitive-relaxed} hold, but  
 Hypothesis \ref{hyp-explosion} does not.
Let $p\colonequals 2$, 
    \begin{align*}
   & \boldsymbol{\phi}_{k}(\boldsymbol{z})  
    \equalscolon
    \begin{cases}
       \boldsymbol{0}_2 & \text{if $\bz=\boldsymbol{0}_2$,}\\
       \begin{pmatrix}
        z_1+1  \\
        z_2+1
    \end{pmatrix} & \text{if $\bz\ne \boldsymbol{0}_2$, $\bz\in\Z_+^2$,}
    \end{cases} \qquad  k\in\Z_+,\\[1mm]
&\boldsymbol{X}_{k,j,i}  \equalscolon
   \begin{cases}
    \begin{pmatrix}
        1 \\
        1
    \end{pmatrix} & \text{with probability $\frac{1}{2}$,}\\
     \boldsymbol{0}_2 & \text{with probability $\frac{1}{2}$},
 \end{cases}
    \qquad\quad 
    \bZ_0  \equalscolon \begin{pmatrix}
        1 \\
        1
    \end{pmatrix},\qquad k,j\in\N, \quad i\in\{1,2\}.
 \end{align*}
Then 
 \[
  \boldsymbol{m}_i=\ev{\boldsymbol{X}_{0,1,i}} 
    = \frac{1}{2} \begin{pmatrix}
        1 \\
        1 
    \end{pmatrix}, \qquad i\in\{1,2\},
    \qquad \text{yielding that}\qquad
  \mathsf{m} = \frac{1}{2} \begin{pmatrix}
        1 & 1 \\
        1 & 1
    \end{pmatrix}.
 \]
Further, let 
 \[
     \mathsf{\Lambda}\colonequals \begin{pmatrix}
                         1 & 0 \\
                         0 & 1 \\
                       \end{pmatrix} ,
    \qquad 
    \boldsymbol{\alpha}\colonequals \begin{pmatrix}
                             1 \\
                             1
                          \end{pmatrix},
   \qquad 
     \boldsymbol{g}(\boldsymbol{z})\colonequals                            
         \begin{cases}
             \begin{pmatrix}
              -1 \\
              -1
              \end{pmatrix}  & \text{if $\bz = \boldsymbol{0}_2$,}\\
            \boldsymbol{0}_2   & \text{if $\bz \ne \boldsymbol{0}_2$, $\bz\in\Z_+^2$.}    
         \end{cases}
  \]
Hence 
  \[
   \tilde{\mathsf{m}} 
    = \mathsf{m} \mathsf{\Lambda}
    =  \frac{1}{2}
                       \begin{pmatrix}
                         1 & 1 \\
                         1 & 1 \\
                       \end{pmatrix} ,
   \]
  which is a primitive matrix with Perron--Frobenius eigenvalue 
  $\tilde\rho = \max(0,1)=1$.
Consequently, Hypotheses \ref{hyp-second-moment-Z0}--\ref{hyp-primitive-relaxed} hold with the above introduced  $\mathsf{\Lambda}$, $\boldsymbol{\alpha}$, $\boldsymbol{g}$, $\tilde{\mathsf{m}}$, and
\[
   \Tilde{\boldsymbol{u}}\colonequals
                \frac{1}{\sqrt{2}} \begin{pmatrix}
                             1 \\
                             1 \\
                           \end{pmatrix},\qquad 
     \Tilde{\boldsymbol{v}}\colonequals
                 \frac{1}{\sqrt{2}} \begin{pmatrix}
                             1 \\
                             1 \\
                           \end{pmatrix}.                           
  \] 
Further, since $\boldsymbol{0}_2$ is a state of $(\bZ_k)_{k\in\Z_+}$
 and $\prob{\boldsymbol{\phi}_0(\boldsymbol{0}_2)=\boldsymbol{0}_2}=1$,
 part (i) of this remark implies that Hypothesis \ref{hyp-explosion} does not hold.
\proofendb
\end{enumerate}
\end{remark}

 In the next remark, we point out that Theorem  \ref{Thm_main} 
 with $p=1$ gives back the result of Gonz\'alez et al.\ \cite{GONZALEZ_MARTIN-CHAVEZ_DEL-PUERTO_2022}  for critical single-type CBPs.

\begin{remark}\label{Rem_1CBP_spec}
Theorem \ref{Thm_main} with $p=1$, i.e. for critical single-type CBPs, yields that $(\mathcal{Z}_t^{(n)})_{t\in\R_+}$ converges weakly toward $(\mathcal{Z}_t)_{t\in\R_+}$ as $n\to\infty$, where
 $(\mathcal{Z}_t)_{t\in\R_+}$ is the pathwise unique strong solution of the SDE 
 $$
  \dif \mathcal{Z}_t = m \alpha \dif t+\sqrt{m^{-1} \Sigma
   \mathcal{Z}_t^+}\dif \mathcal{W}_t,
   \qquad t\in\R_+,
 $$
 with initial value $0$, where $(\mathcal{W}_t)_{t\in\R_+}$ is a standard Wiener process, and $m>0$ and $\Sigma$ 
 are the offspring mean and variance, respectively 
 ($\Lambda$ must be $m^{-1}$ in order that 
 $(Z_k)_{k\in\Z_+}$ be critical). 
 It is worth mentioning that Gonz\'alez et al.\ \cite[Theorem 3.1]{GONZALEZ_MARTIN-CHAVEZ_DEL-PUERTO_2022} obtained 
  the same limit process  $(\mathcal{Z}_t)_{t\in\R_+}$
  assuming the following three hypotheses, namely,  
  (C0): the explosion set has probability one,
  (C1): $\varepsilon(z)=m^{-1}z+m^{-1}\alpha'+\oo(1)$ as $z\to\infty$ with some $\alpha'>0$, and 
  (C2): $\Gamma(z)=\oo(z)$  as $z\to\infty$. 
Notice that  if (C0), (C1) and (C2) hold, then
 our Hypotheses 2, 3, 5 and 6  are satisfied.
 Indeed, Hypotheses 2, 3 and 5  follow trivially from 
 (C1) and (C2),  and (C0) implies Hypothesis 6 as it was pointed out in part (iv) in Remark \ref{remark-hyp}.
 Concerning Hypotheses 1 and 4,  in part (i) of Remark   \ref{remark-hyp}, we already mentioned the reason why  the fourth moments on the offspring and control distributions are 
   assumed in our paper. 
\proofendb
\end{remark}

As a consequence of Theorem \ref{Thm_main} one can deduce the asymptotic behaviour of a critical primitive MBPI previously proved by Isp\'any and Pap \cite[Theorem 3.1]{ispany-pap-2014}.  
Let $(\boldsymbol{Y}_k)_{k\in\Z_+}$ be a MBPI defined in  
 \eqref{def_MBPI}  such that the offspring mean matrix $\mathsf{m}_{\boldsymbol{\xi}} \colonequals 
  (\ev{\boldsymbol{\xi}_{0,1,1}}, \ldots, \ev{\boldsymbol{\xi}_{0,1,p}} )
\in\R_+^{p\times p}$ is primitive with Perron--Frobenius eigenvalue 1, and let $\boldsymbol{u}$ 
 and $\boldsymbol{v}$  be its right and left  Perron--Frobenius eigenvectors, respectively (see Lemma \ref{lemma-primitive}). 
 Then $(\boldsymbol{Y}_k)_{k\in\Z_+}$ is a critical primitive MBPI. 
Let us denote $\boldsymbol{m}_{\boldsymbol{I}}  \colonequals \ev{\boldsymbol{I}_1}\in\R_+^p$ and $\boldsymbol{\mathsf{V}}  \colonequals (\var{\boldsymbol{\xi}_{0,1,1}},\ldots,\var{\boldsymbol{\xi}_{0,1,p}})\in (\R^{p\times p})^p$.

\begin{corollary}[Ispány and Pap {\cite[Theorem 3.1]{ispany-pap-2014}}]\label{cor-MBPI}
    Let $(\boldsymbol{Y}_k)_{k\in\Z_+}$ be  a critical primitive $p$--type branching process with immigration
    such that $\ev{\|\boldsymbol{Y}_0\|^2}<\infty$, $\ev{\|\boldsymbol{\xi}_{0,1,i}\|^4}<\infty$, $i\in\{1,\ldots,p\}$, and $\ev{\|\boldsymbol{I}_{1}\|^4}<\infty$. Then
    \begin{equation*}
        (n^{-1}\boldsymbol{Y}_{\lfloor nt \rfloor})_{t\in\R_+} 
        \stackrel{\mathcal{L}}{\longrightarrow}
        (\mathcal{Y}_t \boldsymbol{u})_{t\in\R_+}
        \qquad \text{as } n\to\infty ,
    \end{equation*}
    where $(\mathcal{Y}_t)_{t\in\R_+}$ is the pathwise unique strong solution of the SDE
    \begin{equation*}
        \dif\mathcal{Y}_t = \boldsymbol{v}^\top \boldsymbol{m}_{\boldsymbol{I}} \dif t + \sqrt{\boldsymbol{v}^\top (\boldsymbol{u}\odot\boldsymbol{\mathsf{V}})\boldsymbol{v}\mathcal{Y}_t^+}\dif\mathcal{W}_t, \qquad t\in\R_+,
    \end{equation*}
    with initial value $\mathcal{Y}_0 = 0$, where $(\mathcal{W}_t)_{t\in\R_+}$ is a standard Wiener  process.
\end{corollary}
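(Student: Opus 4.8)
The plan is to realize $(\boldsymbol{Y}_k)_{k\in\Z_+}$ as the controlled $(p+1)$--type branching process $(\boldsymbol{Z}_k)_{k\in\Z_+}$ of part (ii) of Example \ref{Example1}, apply Theorem \ref{Thm_main} to it, and then recover the claim by projecting onto the first $p$ coordinates. Recall that in this representation $\boldsymbol{Z}_k=(\boldsymbol{Y}_k^\top,1)^\top$, the offspring vectors are $\boldsymbol{X}_{k,j,i}=(\boldsymbol{\xi}_{k,j,i}^\top,0)^\top$ for $i\in\{1,\ldots,p\}$ and $\boldsymbol{X}_{k,1,p+1}=(\boldsymbol{I}_{k+1}^\top,1)^\top$, and the control is the deterministic map $\boldsymbol{\phi}_k(\boldsymbol{z})=(z_1,\ldots,z_p,1)^\top$. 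The whole proof is then a matter of (a) verifying Hypotheses \ref{hyp-second-moment-Z0}--\ref{hyp-explosion} for this $(p+1)$--type CMBP and (b) checking that the limiting SDE produced by Theorem \ref{Thm_main} coincides with the one in the statement.

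First I would record the relevant moment quantities. The offspring mean matrix and the matrices in the control decomposition \eqref{cond_epsilon_z} become
\[
  \mathsf{m}=\begin{pmatrix}\mathsf{m}_{\boldsymbol{\xi}} & \boldsymbol{m}_{\boldsymbol{I}}\\ \boldsymbol{0}_p^\top & 1\end{pmatrix},\qquad
  \mathsf{\Lambda}=\begin{pmatrix}\mathsf{I}_p & \boldsymbol{0}_p\\ \boldsymbol{0}_p^\top & 0\end{pmatrix},\qquad
  \boldsymbol{\alpha}=\begin{pmatrix}\boldsymbol{0}_p\\ 1\end{pmatrix},\qquad \boldsymbol{g}\equiv\boldsymbol{0}_{p+1},
\]
so that $\tilde{\mathsf{m}}=\mathsf{m}\mathsf{\Lambda}=\left(\begin{smallmatrix}\mathsf{m}_{\boldsymbol{\xi}} & \boldsymbol{0}_p\\ \boldsymbol{0}_p^\top & 0\end{smallmatrix}\right)$ is block diagonal. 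Hypotheses \ref{hyp-second-moment-Z0}, \ref{hyp-g}, \ref{hyp-variance-control-o} and \ref{hyp-moment-4} are then immediate: the moment assumptions on $\boldsymbol{Y}_0$, $\boldsymbol{\xi}_{0,1,i}$ and $\boldsymbol{I}_1$ transfer verbatim to $\boldsymbol{Z}_0$ and the $\boldsymbol{X}_{0,1,i}$, while $\mathsf{\Gamma}(\boldsymbol{z})=\boldsymbol{0}$ and $\kappa_i(\boldsymbol{z})=0$ because the control is deterministic; and since $\boldsymbol{g}\equiv\boldsymbol{0}_{p+1}$, Hypothesis \ref{hyp-explosion} is not needed, by part \ref{rem-explosion} of Remark \ref{remark-hyp}.

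The one genuinely delicate point, and the step I expect to be the main obstacle, is Hypothesis \ref{hyp-primitive-relaxed}: the augmented matrix $\tilde{\mathsf{m}}$ is \emph{not} primitive (its zero block makes it reducible), so the relaxed conditions (a)--(c) must be checked by hand rather than read off from the Perron--Frobenius theorem applied to $\tilde{\mathsf{m}}$ itself. Here the primitivity of $\mathsf{m}_{\boldsymbol{\xi}}$ carries the argument: the spectrum of $\tilde{\mathsf{m}}$ is that of $\mathsf{m}_{\boldsymbol{\xi}}$ together with the extra eigenvalue $0$, so $\tilde{\rho}=1$ remains the unique simple eigenvalue of maximal modulus. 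Writing $\boldsymbol{u},\boldsymbol{v}$ for the Perron--Frobenius eigenvectors of $\mathsf{m}_{\boldsymbol{\xi}}$ normalized as in Lemma \ref{lemma-primitive}, the eigenvectors of $\tilde{\mathsf{m}}$ for $\tilde{\rho}=1$ are $\tilde{\boldsymbol{u}}=(\boldsymbol{u}^\top,0)^\top$ and $\tilde{\boldsymbol{v}}=(\boldsymbol{v}^\top,0)^\top$, which satisfy $\mathsf{\Lambda}\tilde{\boldsymbol{u}}=\tilde{\boldsymbol{u}}\in\R_+^{p+1}$, $\tilde{u}_1+\cdots+\tilde{u}_{p+1}=1$ and $\tilde{\boldsymbol{v}}^\top\tilde{\boldsymbol{u}}=\boldsymbol{v}^\top\boldsymbol{u}=1$; finally $\tilde{\mathsf{m}}^k=\left(\begin{smallmatrix}\mathsf{m}_{\boldsymbol{\xi}}^k & \boldsymbol{0}_p\\ \boldsymbol{0}_p^\top & 0\end{smallmatrix}\right)\to\tilde{\boldsymbol{u}}\tilde{\boldsymbol{v}}^\top$ geometrically, by the corresponding convergence of $\mathsf{m}_{\boldsymbol{\xi}}^k$ guaranteed by Lemma \ref{lemma-primitive}. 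This establishes Hypothesis \ref{hyp-primitive-relaxed}.

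With all hypotheses in force, Theorem \ref{Thm_main} gives $(n^{-1}\boldsymbol{Z}_{\lfloor nt\rfloor})_{t\in\R_+}\stackrel{\mathcal{L}}{\longrightarrow}(\mathcal{Z}_t\tilde{\boldsymbol{u}})_{t\in\R_+}$. Applying the continuous mapping theorem (Theorem \ref{cont-map-thm}) to the projection onto the first $p$ coordinates, which is continuous for the Skorokhod topology, and noting that these coordinates of $n^{-1}\boldsymbol{Z}_{\lfloor nt\rfloor}$ are exactly $n^{-1}\boldsymbol{Y}_{\lfloor nt\rfloor}$ while those of $\mathcal{Z}_t\tilde{\boldsymbol{u}}$ equal $\mathcal{Z}_t\boldsymbol{u}$, yields the asserted convergence with $\mathcal{Y}_t:=\mathcal{Z}_t$. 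It then remains to match coefficients: a direct computation gives $\tilde{\boldsymbol{v}}^\top\mathsf{m}\boldsymbol{\alpha}=\boldsymbol{v}^\top\boldsymbol{m}_{\boldsymbol{I}}$, and, using $\mathsf{\Lambda}\tilde{\boldsymbol{u}}=\tilde{\boldsymbol{u}}$ (whose last coordinate vanishes) together with the block structure $\mathsf{\Sigma}_i=\left(\begin{smallmatrix}\mathsf{V}_i & \boldsymbol{0}_p\\ \boldsymbol{0}_p^\top & 0\end{smallmatrix}\right)$ for $i\le p$, where $\mathsf{V}_i$ is the $i$--th entry of $\boldsymbol{\mathsf{V}}$, one finds $\tilde{\boldsymbol{v}}^\top((\mathsf{\Lambda}\tilde{\boldsymbol{u}})\odot\boldsymbol{\mathsf{\Sigma}})\tilde{\boldsymbol{v}}=\boldsymbol{v}^\top(\boldsymbol{u}\odot\boldsymbol{\mathsf{V}})\boldsymbol{v}$. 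Hence SDE \eqref{eq-SDE-main-thm-1-dim} for $\mathcal{Z}_t$ coincides with the SDE for $\mathcal{Y}_t$ in the statement, and pathwise uniqueness (Remark \ref{rem-unique}) identifies the two limits, completing the proof.
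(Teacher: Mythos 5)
Your proposal is correct and follows essentially the same route as the paper's own proof: the same rewriting of the MBPI as a $(p+1)$--type CMBP with $\mathsf{\Lambda}$, $\boldsymbol{\alpha}$, $\boldsymbol{g}\equiv\boldsymbol{0}_{p+1}$ as above, the same verification of Hypotheses \ref{hyp-second-moment-Z0}--\ref{hyp-primitive-relaxed} via the block matrices $\tilde{\mathsf{m}}$, $\tilde{\boldsymbol{u}}=(\boldsymbol{u}^\top,0)^\top$, $\tilde{\boldsymbol{v}}=(\boldsymbol{v}^\top,0)^\top$, and the same appeal to Theorem \ref{Thm_main}. The only difference is that you spell out the final projection onto the first $p$ coordinates and the coefficient identities $\tilde{\boldsymbol{v}}^\top\mathsf{m}\boldsymbol{\alpha}=\boldsymbol{v}^\top\boldsymbol{m}_{\boldsymbol{I}}$ and $\tilde{\boldsymbol{v}}^\top((\mathsf{\Lambda}\tilde{\boldsymbol{u}})\odot\boldsymbol{\mathsf{\Sigma}})\tilde{\boldsymbol{v}}=\boldsymbol{v}^\top(\boldsymbol{u}\odot\boldsymbol{\mathsf{V}})\boldsymbol{v}$, which the paper leaves implicit.
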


\begin{proof}
  To apply Theorem \ref{Thm_main},  let us rewrite $(\boldsymbol{Y}_k)_{k\in\Z_+}$ as a CMBP  (see part (ii) of Example \ref{Example1}). 
  For $k\in\Z_+$, $j\in \N$ and $i\in\{1,\ldots,p\}$, we have
  \begin{equation*}
     \boldsymbol{Z}_{k} =
        \begin{pmatrix}
            \boldsymbol{Y}_{k} \\
            {1}
        \end{pmatrix},
        \qquad
        \boldsymbol{X}_{k,j,i} =
        \begin{pmatrix}
            \boldsymbol{\xi}_{k,j,i} \\
            0
        \end{pmatrix},  \qquad
        {\boldsymbol{X}_{k,1,p+1} } =
        \begin{pmatrix}
            \boldsymbol{I}_{k+1} \\
            1
        \end{pmatrix},\qquad
         \boldsymbol{\phi}_{k} (\boldsymbol{z}) =
        \begin{pmatrix}
            z_1 \\
            \vdots \\
            z_p \\
            1
        \end{pmatrix}
    \end{equation*}
    for each $\bz\in\Z_+^{p+1}$.  Further, the deterministic control functions  can be written in the form  $\boldsymbol{\phi}_{k} (\boldsymbol{z}) = \mathsf{\Lambda}\boldsymbol{z} + \boldsymbol{\alpha} + \boldsymbol{g} (\boldsymbol{z})$, 
     $\bz\in\Z_+^{p+1}$, $k\in\Z_+$, with
    \begin{equation*}
        \mathsf{\Lambda} = \begin{pmatrix}
            \mathsf{I}_p & \boldsymbol{0}_{p} \\
            \boldsymbol{0}_{p}^\top & 0
        \end{pmatrix}, \qquad \boldsymbol{\alpha} = \begin{pmatrix}
            \boldsymbol{0}_{p} \\
            1
        \end{pmatrix}
        \qquad \mbox{ and }\qquad\boldsymbol{g}\equiv\boldsymbol{0}_{p+1}.
    \end{equation*}
  Then we have
    \begin{align*}
        \ev{\|\bZ_0\|^2} &= \ev{ 1+\|\boldsymbol{Y}_0\|^2} < \infty, \\
        \ev{\|\boldsymbol{X}_{0,1,i}\|^4} &= \ev{\|\boldsymbol{\xi}_{0,1,i}\|^4} < \infty, \qquad i\in\{1,\ldots,p\} ,\\
        \ev{\|\boldsymbol{X}_{0,1,p+1}\|^4} &= \ev{(1+\|\boldsymbol{I}_{1}\|^2)^2} < \infty,\\
       \ev{\|\boldsymbol{\phi}_0(\bz)\|^4} &  = \|\boldsymbol{\phi}_0(\bz)\|^4 < \infty, \qquad \bz\in\Z_+^{p+1}.  
    \end{align*}
    Thus  $\mathsf{\Gamma}(\boldsymbol{z})=\mathsf{0} 
    \in\R^{(p+1)\times (p+1)}$
     and $\kappa_i(\bz)=0$, $\bz\in \Z_+^{p+1}$, 
     $i\in\{1,\ldots,p\}$, 
     and hence Hypotheses \ref{hyp-second-moment-Z0}, \ref{hyp-g}, \ref{hyp-variance-control-o} and \ref{hyp-moment-4} hold. 
    Since $\boldsymbol{g} \equiv \boldsymbol{0}_{p+1}$, Hypothesis \ref{hyp-explosion} is not needed, see part \ref{rem-explosion} of Remark \ref{remark-hyp}.
    Finally, Hypothesis \ref{hyp-primitive-relaxed} is  satisfied with
    \begin{equation*}
       \mathsf{m} = \begin{pmatrix}
            \mathsf{m}_{\boldsymbol{\xi}} & \boldsymbol{m}_{\boldsymbol{I}} \\
            \boldsymbol{0}_{p}^\top &  1
        \end{pmatrix}, \qquad
        \tilde{\mathsf{m}} = \begin{pmatrix}
            \mathsf{m}_{\boldsymbol{\xi}} & \boldsymbol{0}_{p} \\
            \boldsymbol{0}_{p}^\top & 0
        \end{pmatrix}, \qquad 
        \Tilde{\boldsymbol{u}} = \begin{pmatrix}
            \boldsymbol{u} \\
            0
        \end{pmatrix}, \qquad \Tilde{\boldsymbol{v}} = \begin{pmatrix}
            \boldsymbol{v} \\
            0
        \end{pmatrix},
        \qquad 
        \tilde{\mathsf{\Pi}} 
         =  \begin{pmatrix}
             \mathsf{\Pi}_{\boldsymbol{\xi}}   & \boldsymbol{0}_{p} \\
            \boldsymbol{0}_{p}^\top & 0
        \end{pmatrix},
    \end{equation*}
      where $ \mathsf{\Pi}_{\boldsymbol{\xi}} 
         \colonequals\lim_{k\to\infty}\mathsf{m}_{\boldsymbol{\xi}}^k$, 
    and $\tilde{c}$ and $\tilde{r}$ in part (c) of Hypothesis \ref{hyp-primitive-relaxed}
    can be chosen as for $ \mathsf{\Pi}_{\boldsymbol{\xi}}$ 
    (since $\mathsf{m}_{\boldsymbol{\xi}}$ is primitive with 
    Frobenius--Perron eigenvalue $1$, and see part (iii) of Lemma \ref{lemma-primitive}).
 Further, we get
    \begin{align*}
        \mathsf{\Sigma}_i &= \begin{pmatrix}
            \var{\boldsymbol{\xi}_{0,1,i}} & \boldsymbol{0}_{p} \\
            \boldsymbol{0}_{p}^\top & 0
        \end{pmatrix}, \qquad i\in\{1,\ldots,p\}, &
        \mathsf{\Sigma}_{p+1} &= \begin{pmatrix}
            \var{\boldsymbol{I}_{1}} & \boldsymbol{0}_{p} \\
            \boldsymbol{0}_{p}^\top & 0
        \end{pmatrix},
    \end{align*}
    and, consequently, Theorem \ref{Thm_main}  yields the statement.
\end{proof}

 Motivated by part (i) of Remark \ref{rem-interpretation}, we derive another corollary of Theorem \ref{Thm_main}.

\begin{corollary}\label{cor-MBPI-2}
 Let $(\boldsymbol{Z}_k)_{k\in\Z_+}$ be a  critical controlled $p$--type branching 
  process given in part (i) of Remark \ref{rem-interpretation}
  such that $\ev{\|\boldsymbol{Y}_0\|^2}<\infty$, 
    $\ev{\|\boldsymbol{\xi}_{0,1,i}\|^4}<\infty$, $i\in\{1,\ldots,p\}$, and $\ev{\|\boldsymbol{I}_{1}\|^4}<\infty$.
  Then
    \begin{equation*}
        (n^{-1}\boldsymbol{Z}_{\lfloor nt \rfloor})_{t\in\R_+} 
        \stackrel{\mathcal{L}}{\longrightarrow}
        (\mathcal{Z}_t \boldsymbol{u})_{t\in\R_+}
        \qquad \text{as } n\to\infty ,
    \end{equation*}
    where $(\mathcal{Z}_t)_{t\in\R_+}$ is the pathwise unique strong solution of the SDE
    \begin{equation*}
        \dif\mathcal{Z}_t =  \boldsymbol{v}^\top  \boldsymbol{m}_{\boldsymbol{I}}
    \dif t + \sqrt{\boldsymbol{v}^\top (\boldsymbol{u}\odot\boldsymbol{\mathsf{V}})\boldsymbol{v}\mathcal{Z}_t^+}\dif\mathcal{W}_t, \qquad t\in\R_+,
    \end{equation*}
    with initial value $\mathcal{Z}_0 = 0$, where $(\mathcal{W}_t)_{t\in\R_+}$ is a standard Wiener process.
\end{corollary}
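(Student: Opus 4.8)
The plan is to verify that the process $(\boldsymbol{Z}_k)_{k\in\Z_+}$ from part (i) of Remark \ref{rem-interpretation} satisfies all the hypotheses of Theorem \ref{Thm_main} and then to read off and simplify the coefficients of the limiting SDE \eqref{eq-SDE-main-thm-1-dim}. Recall that this representation uses $\bZ_0 = \boldsymbol{Y}_0$, $\boldsymbol{X}_{k,j,i} = \boldsymbol{\xi}_{k,j,i}$, and the genuinely random control functions $\boldsymbol{\phi}_k(\boldsymbol{z}) = \boldsymbol{z} + \boldsymbol{I}_{k+1}$. First I would compute the basic moments: since $\boldsymbol{X}_{0,1,i} = \boldsymbol{\xi}_{0,1,i}$, we get $\mathsf{m} = \mathsf{m}_{\boldsymbol{\xi}}$ and $\boldsymbol{\mathsf{\Sigma}} = \boldsymbol{\mathsf{V}}$, while from $\boldsymbol{\varepsilon}(\boldsymbol{z}) = \ev{\boldsymbol{z}+\boldsymbol{I}_1} = \boldsymbol{z} + \boldsymbol{m}_{\boldsymbol{I}}$ one identifies the objects of Hypothesis \ref{hyp-g} as $\mathsf{\Lambda} = \mathsf{I}_p$, $\boldsymbol{\alpha} = \boldsymbol{m}_{\boldsymbol{I}}$ and $\boldsymbol{g}\equiv\boldsymbol{0}_p$. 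Consequently $\tilde{\mathsf{m}} = \mathsf{m}\mathsf{\Lambda} = \mathsf{m}_{\boldsymbol{\xi}}$.

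The verification of the hypotheses is then immediate. Hypothesis \ref{hyp-second-moment-Z0} follows from the assumed moment conditions on $\boldsymbol{Y}_0$, $\boldsymbol{\xi}_{0,1,i}$ and $\boldsymbol{I}_1$, noting that $\ev{\|\boldsymbol{\phi}_0(\boldsymbol{z})\|^4} = \ev{\|\boldsymbol{z}+\boldsymbol{I}_1\|^4} < \infty$. The crucial point, in contrast with Corollary \ref{cor-MBPI}, is that although the control here is random, its second and fourth centred moments are constant in $\boldsymbol{z}$: one computes $\mathsf{\Gamma}(\boldsymbol{z}) = \var{\boldsymbol{I}_1}$ and $\kappa_i(\boldsymbol{z}) = \ev{(I_{1,i}-m_{I,i})^4}$, both independent of $\boldsymbol{z}$. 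Hence $\|\mathsf{\Gamma}(\boldsymbol{z})\| = \OO(1) = \oo(\|\boldsymbol{z}\|)$ and $\kappa_i(\boldsymbol{z}) = \OO(1) = \OO(\|\boldsymbol{z}\|^2)$, giving Hypotheses \ref{hyp-variance-control-o} and \ref{hyp-moment-4}. Since $\mathsf{m}_{\boldsymbol{\xi}}$ is primitive with Perron--Frobenius eigenvalue $1$ and $\mathsf{\Lambda} = \mathsf{I}_p$, the matrix $\tilde{\mathsf{m}} = \mathsf{m}_{\boldsymbol{\xi}}$ is primitive with eigenvalue $1$, its Perron--Frobenius eigenvectors are $\tilde{\boldsymbol{u}} = \boldsymbol{u}$ and $\tilde{\boldsymbol{v}} = \boldsymbol{v}$, and $\mathsf{\Lambda}\tilde{\boldsymbol{u}} = \boldsymbol{u}\in\R_+^p$; thus Hypothesis \ref{hyp-primitive-relaxed} holds by part \ref{rem-primitive} of Remark \ref{remark-hyp}. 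Finally, because $\boldsymbol{g}\equiv\boldsymbol{0}_p$, Hypothesis \ref{hyp-explosion} is not required, by part \ref{rem-explosion} of Remark \ref{remark-hyp}.

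With all hypotheses in force, Theorem \ref{Thm_main} applies and yields the claimed weak convergence toward $(\mathcal{Z}_t\boldsymbol{u})_{t\in\R_+}$, so it remains only to simplify the coefficients of SDE \eqref{eq-SDE-main-thm-1-dim}. For the diffusion coefficient, substituting $\mathsf{\Lambda}\tilde{\boldsymbol{u}} = \boldsymbol{u}$ and $\boldsymbol{\mathsf{\Sigma}} = \boldsymbol{\mathsf{V}}$ gives directly $\tilde{\boldsymbol{v}}^\top((\mathsf{\Lambda}\tilde{\boldsymbol{u}})\odot\boldsymbol{\mathsf{\Sigma}})\tilde{\boldsymbol{v}} = \boldsymbol{v}^\top(\boldsymbol{u}\odot\boldsymbol{\mathsf{V}})\boldsymbol{v}$. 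For the drift, the left-eigenvector identity $\boldsymbol{v}^\top\mathsf{m}_{\boldsymbol{\xi}} = \boldsymbol{v}^\top$ (valid since $\boldsymbol{v}$ corresponds to eigenvalue $1$) collapses $\tilde{\boldsymbol{v}}^\top\mathsf{m}\boldsymbol{\alpha} = \boldsymbol{v}^\top\mathsf{m}_{\boldsymbol{\xi}}\boldsymbol{m}_{\boldsymbol{I}}$ to $\boldsymbol{v}^\top\boldsymbol{m}_{\boldsymbol{I}}$, matching the stated SDE. The only genuinely non-mechanical step is this last simplification via the eigenvector property; everything else is bookkeeping, so I expect no real obstacle in this corollary.
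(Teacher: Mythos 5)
Your proposal is correct and follows essentially the same route as the paper's proof: the same CMBP representation with $\boldsymbol{\phi}_k(\boldsymbol{z})=\boldsymbol{z}+\boldsymbol{I}_{k+1}$, the same identification $\mathsf{\Lambda}=\mathsf{I}_p$, $\boldsymbol{\alpha}=\boldsymbol{m}_{\boldsymbol{I}}$, $\boldsymbol{g}\equiv\boldsymbol{0}_p$, the same verification that $\mathsf{\Gamma}(\boldsymbol{z})$ and $\kappa_i(\boldsymbol{z})$ are constant in $\boldsymbol{z}$, dropping Hypothesis \ref{hyp-explosion} via part \ref{rem-explosion} of Remark \ref{remark-hyp}, and the same final drift simplification $\boldsymbol{v}^\top\mathsf{m}_{\boldsymbol{\xi}}\boldsymbol{m}_{\boldsymbol{I}}=\boldsymbol{v}^\top\boldsymbol{m}_{\boldsymbol{I}}$ from the left eigenvector property.
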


\begin{proof}
  We follow the same steps as in the proof of
   Corollary \ref{cor-MBPI}. We have
    \begin{align*}
        \ev{\|\bZ_0\|^2} &= \ev{\|\boldsymbol{Y}_0\|^2} < \infty, \\
        \ev{\|\boldsymbol{X}_{0,1,i}\|^4} &= \ev{\|\boldsymbol{\xi}_{0,1,i}\|^4} < \infty, \qquad i\in\{1,\ldots,p\} \\
        \ev{\|\boldsymbol{\phi}_0(\bz)\|^4} &\leq 8 \ev{\|\bz\|^4 + \|\boldsymbol{I}_{1}\|^4} < \infty, 
    \end{align*}
    where, for the last inequality, we used
    the power mean inequality (see \eqref{mean_power_ineq}). 
    Therefore, Hypothesis \ref{hyp-second-moment-Z0} is satisfied. Hypothesis \ref{hyp-g} also holds because $\boldsymbol{\varepsilon}(\boldsymbol{z}) = \mathsf{\Lambda}\boldsymbol{z} + \boldsymbol{\alpha} + \boldsymbol{g}(\boldsymbol{z})$, $\bz\in\Z_+^p$, with $\mathsf{\Lambda} = \mathsf{I}_p$, $\boldsymbol{\alpha} = \boldsymbol{m}_{\boldsymbol{I}}$ and $\boldsymbol{g}\equiv\boldsymbol{0}_{p}$. 
    Due to $\ev{\|\boldsymbol{I}_{1}\|^4} <\infty$, it is clear that $\mathsf{\Gamma}(\boldsymbol{z}) = \var{\boldsymbol{I}_1}  
    \in\R^{p\times p}$ and $\kappa_i (\bz) = \ev{(I_{1,i}-m_{\boldsymbol{I},i})^4} < \infty$, $\bz\in\Z_+^p$, $i\in\{1,\ldots,p\}$, 
    so Hypotheses \ref{hyp-variance-control-o} and \ref{hyp-moment-4} 
    are satisfied.
    Furthermore, Hypothesis \ref{hyp-primitive-relaxed} holds with
    \begin{equation*}
       \mathsf{m} = \mathsf{m}_{\boldsymbol{\xi}}, \qquad
        \tilde{\mathsf{m}} =             \mathsf{m}_{\boldsymbol{\xi}}, \qquad 
        \Tilde{\boldsymbol{u}} =\boldsymbol{u}, \qquad \Tilde{\boldsymbol{v}} =             \boldsymbol{v},
        \qquad 
        \tilde{\mathsf{\Pi}} 
         =   \mathsf{\Pi}_{\boldsymbol{\xi},}
    \end{equation*}
     where $\mathsf{\Pi}_{\boldsymbol{\xi}} 
         := \lim_{k\to\infty}\mathsf{m}_{\boldsymbol{\xi}}^k$, 
    and $\tilde{c}$ and $\tilde{r}$ in part (c) of Hypothesis \ref{hyp-primitive-relaxed} can be chosen as for $\mathsf{\Pi}_{\boldsymbol{\xi}}$ 
    (see part (iii) of Lemma \ref{lemma-primitive}).    
    Indeed, by our assumption,  $\mathsf{m}_{\boldsymbol{\xi}}$ is primitive with Perron--Frobenius eigenvalue equals 1. Finally, Hypothesis \ref{hyp-explosion} is not needed, since $\boldsymbol{g}\equiv\boldsymbol{0}_p$ (see part \ref{rem-explosion} 
     of Remark \ref{remark-hyp}). 
    Hence the result follows from Theorem \ref{Thm_main}  using also that $\boldsymbol{v}^\top  \mathsf{m}_{\boldsymbol{\xi}}  \boldsymbol{m}_{\boldsymbol{I}}
    = \boldsymbol{v}^\top \boldsymbol{m}_{\boldsymbol{I}}$, since $\boldsymbol{v}$
    is the left Perron--Frobenius eigenvector for 
    $\mathsf{m}_{\boldsymbol{\xi}}$.
\end{proof}

Next, we establish a scaling limit theorem for a 2SBPI with the promiscuous mating function $L(x,y) \colonequals x\min\{1,y\}$, $x,y\in\Z_+$. This type of mating function was first considered by Daley \cite{DALEY_1968}, and it was deeply studied by Alsmeyer and R\"osler \cite{ALSMEYER_1996, ALSMETER_2002}.
 Up to our knowledge, no such result as the following one is available in the literature  (for more details, see Remark \ref{Rem_Bansaye}).

\begin{corollary}\label{cor-2SBPI}
 Let $(F_k,M_k)_{k\in\Z_+}$ be a 2SBPI defined in \eqref{eq-definition}  with the promiscuous mating function. 
 Assume that $(F_0,M_0)$ is a $\N^2$--valued random vector, 
 $\ev{\|(F_0,M_0)\|^2}<\infty$,  $\ev{\|(f_{0,1},m_{0,1})\|^4}<\infty$, and 
  $\ev{\|(F_1^I,M_1^I)\|^4}<\infty$.
   If $\prob{M_1^I=0}=0$   
   and $\ev{f_{0,1}}=1$, then 
    \begin{equation*}
        (n^{-1} (F_{\lfloor nt \rfloor}, M_{\lfloor nt \rfloor}))_{t \in\R_+}
        \stackrel{\mathcal{L}}{\longrightarrow} 
        (\mathcal{X}_t (1,\ev{m_{0,1}}))_{t \in\R_+} \qquad \text{as } n\to\infty, 
    \end{equation*}
    where $(\mathcal{X}_t)_{t\in\R_+}$ is the pathwise unique strong solution of the SDE 
    \begin{equation*}
        \dif\mathcal{X}_t = \ev{F_1^I}\dif t 
             + \sqrt{ \var{f_{0,1}} \mathcal{X}_t^+} \dif\mathcal{W}_t, 
             \qquad t \in\R_+,
    \end{equation*}
    with initial value $\mathcal{X}_0 = 0$, where $(\mathcal{W}_t)_{t\in\R_+}$ is a standard Wiener process.
\end{corollary}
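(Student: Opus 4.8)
The plan is to realize the 2SBPI $(F_k,M_k)_{k\in\Z_+}$ as a controlled two-type branching process through part (iv) of Example \ref{Example1} and then invoke Theorem \ref{Thm_main}. With $p\colonequals 2$ this gives $\bZ_k=(F_k,M_k)^\top$, offspring vectors $\boldsymbol{X}_{k,j,1}=(f_{k,j},m_{k,j})^\top$ and $\boldsymbol{X}_{k,j,2}=(F_{k+1}^I,M_{k+1}^I)^\top$, and the deterministic control $\boldsymbol{\phi}_k(\boldsymbol{z})=(L(\boldsymbol{z}),1)^\top=(z_1\min\{1,z_2\},1)^\top$. Using $\ev{f_{0,1}}=1$ one gets $\boldsymbol{m}_1=(1,\ev{m_{0,1}})^\top$ and $\boldsymbol{m}_2=(\ev{F_1^I},\ev{M_1^I})^\top$, so that
\[
\mathsf{m}=\begin{pmatrix}1 & \ev{F_1^I}\\ \ev{m_{0,1}} & \ev{M_1^I}\end{pmatrix}.
\]

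First I would confront the one genuine obstacle: the mating function $L(z_1,z_2)=z_1\min\{1,z_2\}$ is nonlinear precisely on $\{z_2=0\}$, where $\varepsilon_1(\boldsymbol{z})=z_1\1_{\{z_2\geq 1\}}$ collapses to $0$. Hence no decomposition $\boldsymbol{\varepsilon}(\boldsymbol{z})=\mathsf{\Lambda}\boldsymbol{z}+\boldsymbol{\alpha}+\boldsymbol{g}(\boldsymbol{z})$ with $\|\boldsymbol{g}(\boldsymbol{z})\|=\oo(1)$ can hold verbatim, since along $\boldsymbol{z}=(n,0)$ the natural remainder $-z_1\1_{\{z_2=0\}}$ blows up. The observation that rescues Hypothesis \ref{hyp-g} is that the process never visits $\{z_2=0\}$: indeed $M_0\geq 1$ because $(F_0,M_0)$ is $\N^2$-valued, and $M_{k+1}\geq M_{k+1}^I\geq 1$ almost surely because $\prob{M_1^I=0}=0$, so $M_k\geq 1$ for all $k$. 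Therefore $L(F_k,M_k)=F_k$ along every trajectory, and I may redefine $\boldsymbol{\phi}_k(\boldsymbol{z})$ on the null region $\{z_2=0\}$ (say to $(z_1,1)^\top$) without altering the distribution of $(\bZ_k)_{k\in\Z_+}$. For the modified process $\boldsymbol{\varepsilon}(\boldsymbol{z})=(z_1,1)^\top$ for all $\boldsymbol{z}$, so Hypothesis \ref{hyp-g} holds with
\[
\mathsf{\Lambda}=\begin{pmatrix}1 & 0\\ 0 & 0\end{pmatrix},\qquad \boldsymbol{\alpha}=\begin{pmatrix}0\\ 1\end{pmatrix},\qquad \boldsymbol{g}\equiv\boldsymbol{0}_2.
\]

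The remaining hypotheses are then routine. Hypothesis \ref{hyp-second-moment-Z0} follows from the assumed second moment of $(F_0,M_0)$, the fourth moments of the offspring and immigration laws, and the determinism of $\boldsymbol{\phi}_0$; because the control is deterministic, $\mathsf{\Gamma}(\boldsymbol{z})=\mathsf{0}$ and $\kappa_i(\boldsymbol{z})=0$, giving Hypotheses \ref{hyp-variance-control-o} and \ref{hyp-moment-4}, while Hypothesis \ref{hyp-explosion} is not needed since $\boldsymbol{g}\equiv\boldsymbol{0}_2$ (part \ref{rem-explosion} of Remark \ref{remark-hyp}). For Hypothesis \ref{hyp-primitive-relaxed} I compute $\tilde{\mathsf{m}}=\mathsf{m}\mathsf{\Lambda}=\left(\begin{smallmatrix}1 & 0\\ \ev{m_{0,1}} & 0\end{smallmatrix}\right)$, lower triangular with eigenvalues $1$ and $0$, so $\tilde\rho=1$ is simple. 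Here I would stress that $\tilde{\mathsf{m}}$ is \emph{not} primitive (its second column vanishes), so the shortcut of part \ref{rem-primitive} of Remark \ref{remark-hyp} does not apply; instead one checks directly that $\tilde{\mathsf{m}}$ is idempotent, $\tilde{\mathsf{m}}^k=\tilde{\mathsf{m}}$ for $k\in\N$, whence part (c) holds with $\tilde{\mathsf{\Pi}}=\tilde{\mathsf{m}}$ and any $\tilde r\in(0,1)$. Solving the eigenvector equations under the normalizations of part (b) yields $\tilde{\boldsymbol{u}}=(1+\ev{m_{0,1}})^{-1}(1,\ev{m_{0,1}})^\top$ and $\tilde{\boldsymbol{v}}=(1+\ev{m_{0,1}},0)^\top$, and one verifies $\tilde{\boldsymbol{u}}\tilde{\boldsymbol{v}}^\top=\tilde{\mathsf{m}}$ together with $\mathsf{\Lambda}\tilde{\boldsymbol{u}}=(1+\ev{m_{0,1}})^{-1}(1,0)^\top\in\R_+^2$.

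Finally I would apply Theorem \ref{Thm_main} and identify the limit. Writing $c\colonequals 1+\ev{m_{0,1}}$, the drift is $\tilde{\boldsymbol{v}}^\top\mathsf{m}\boldsymbol{\alpha}=\tilde{\boldsymbol{v}}^\top\boldsymbol{m}_2=c\,\ev{F_1^I}$, and since $\mathsf{\Lambda}\tilde{\boldsymbol{u}}=c^{-1}\boldsymbol{e}_1$ the operator $\odot$ selects only $\mathsf{\Sigma}_1=\var{(f_{0,1},m_{0,1})}$, so the diffusion coefficient is $\tilde{\boldsymbol{v}}^\top((\mathsf{\Lambda}\tilde{\boldsymbol{u}})\odot\boldsymbol{\mathsf{\Sigma}})\tilde{\boldsymbol{v}}=c^{-1}\tilde{\boldsymbol{v}}^\top\mathsf{\Sigma}_1\tilde{\boldsymbol{v}}=c\,\var{f_{0,1}}$. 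Theorem \ref{Thm_main} then gives $(n^{-1}(F_{\lfloor nt\rfloor},M_{\lfloor nt\rfloor}))\stackrel{\mathcal{L}}{\longrightarrow}(\mathcal{Z}_t\tilde{\boldsymbol{u}})$, where $\dif\mathcal{Z}_t=c\,\ev{F_1^I}\dif t+\sqrt{c\,\var{f_{0,1}}\,\mathcal{Z}_t^+}\,\dif\mathcal{W}_t$. Setting $\mathcal{X}_t\colonequals c^{-1}\mathcal{Z}_t$ turns $\mathcal{Z}_t\tilde{\boldsymbol{u}}$ into $\mathcal{X}_t(1,\ev{m_{0,1}})^\top$, and using $\mathcal{Z}_t^+=c\,\mathcal{X}_t^+$ the SDE reduces to $\dif\mathcal{X}_t=\ev{F_1^I}\dif t+\sqrt{\var{f_{0,1}}\,\mathcal{X}_t^+}\,\dif\mathcal{W}_t$, exactly the claimed limit. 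The main difficulty is thus the very first step: recognizing the non-visited null region and exploiting $\prob{M_1^I=0}=0$ to justify the harmless redefinition of the control that makes Hypothesis \ref{hyp-g} applicable.
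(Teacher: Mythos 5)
Your proposal is correct and takes essentially the same route as the paper: rewrite the 2SBPI as a CMBP with deterministic control $(z_1,1)^\top$ (legitimate because $M_k\geq 1$ almost surely for every $k$, so $L(F_k,M_k)=F_k$), verify Hypotheses \ref{hyp-second-moment-Z0}--\ref{hyp-primitive-relaxed} with $\mathsf{\Lambda}=\bigl(\begin{smallmatrix}1&0\\0&0\end{smallmatrix}\bigr)$, $\boldsymbol{\alpha}=(0,1)^\top$, $\boldsymbol{g}\equiv\boldsymbol{0}_2$ (so Hypothesis \ref{hyp-explosion} is not needed), apply Theorem \ref{Thm_main}, and rescale by $1+\ev{m_{0,1}}$, with all eigenvector and coefficient computations matching the paper's. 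The only minor deviation is your justification that the process avoids $\{z_2=0\}$: the pathwise bound $M_{k+1}\geq M_{k+1}^I\geq 1$ a.s.\ is in fact a more direct argument than the paper's conditioning computation of $\prob{M_{n+1}=0}$ given $U_n$, but it establishes the same fact and does not change the structure of the proof.
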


\begin{proof}
Recall that $L(x,y) = x\min\{1,y\}$, $x,y\in\Z_+$.

First, using that $\prob{M_1^I=0}=0$ and $(F_0,M_0)$ is $\N^2$--valued, 
   we check that $L(F_n,M_n) = F_n$ for each $n\in\Z_+$ almost surely.
For this, since $M_n$ is $\Z_+$--valued, taking into account the form of $L$, it is enough to verify that 
 $\prob{M_n=0}=0$, $n\in\N$. Using that $\prob{M_1^I=0}=0$, 
  by a conditioning argument with respect to $U_n$, 
we can get for each $n\in\Z_+$, 
         \begin{align*}
       \prob{M_{n+1}=0}
        & = \ev{\1_{\{M_{n+1}=0\}}}
         = \ev{ \evcond{\1_{\{M_{n+1}=0\}}}{U_n}}
         = \ev{ \evcond{\1_{\{ \sum_{j=1}^{U_n}m_{n,j} + M_{n+1}^I=0\}}}{U_n}} \\
        & = \ev{ \evcond{
          \1_{\{ M_{n+1}^I=0\}}
          \prod_{j=1}^{U_n}\1_{\{ m_{n,j} =0\}}}{U_n}} = \ev{ \prob{M_1^I=0} (\prob{m_{0,1} =0})^{U_n} } \\
         &= \prob{M_1^I=0} \ev{(\prob{m_{0,1} =0})^{U_n}}
         = 0,
        \end{align*} 
   as desired.
  
 Consequently, $(F_k,M_k)_{k\in\Z_+}$ coincides almost surely with the  
  2-type controlled branching process $(\bZ_k)_{k\in\Z_+}$ given by
 \begin{align*}
     \boldsymbol{Z}_{k}  \colonequals 
        \begin{pmatrix}
            F_{k} \\
            M_{k}
        \end{pmatrix}, \qquad
        \boldsymbol{\phi}_{k} (\boldsymbol{z})  \colonequals 
        \begin{pmatrix}
            z_1 \\
            1 
        \end{pmatrix}, \qquad
        \boldsymbol{X}_{k,j,1}  \colonequals 
        \begin{pmatrix}
            f_{k,j} \\
            m_{k,j}
        \end{pmatrix}, \qquad
        \boldsymbol{X}_{k,j,2}  \colonequals 
        \begin{pmatrix}
            F_{k+1}^{I} \\[1mm]
            M_{k+1}^{I}
        \end{pmatrix} 
    \end{align*}
    for $k\in\Z_+$, $j\in\N$, and $\bz=(z_1,z_2)\in\Z_+^2$. 
    
We want to apply Theorem \ref{Thm_main} for $(\bZ_k)_{k\in\Z_+}$.
The control distributions are deterministic, so Hypothesis \ref{hyp-variance-control-o} and \ref{hyp-moment-4} are trivially satisfied. In addition,  we get
    \begin{align*}
        \ev{\|\bZ_0\|^2} &= \ev{\|(F_0,M_0)\|^2} < \infty, \\
        \ev{\|\boldsymbol{X}_{0,1,1}\|^4} &= \ev{\|(f_{0,1},m_{0,1})\|^4} < \infty, \\
        \ev{\|\boldsymbol{X}_{0,1,2}\|^4} &= \ev{\|(F_1^I,M_1^I)\|^4} < \infty, \\
        \ev{\|\boldsymbol{\phi}_0(\bz)\|^4} &=  (z_1^2+1)^2 < \infty, \qquad \bz\in\Z_+^2 ,
    \end{align*}
     yielding that Hypothesis \ref{hyp-second-moment-Z0} holds.
For each $\bz\in\Z_+^2$, we have
    \begin{equation*}
             \boldsymbol{\phi}_0 (\boldsymbol{z}) 
             = \begin{pmatrix}
                z_1 \\
                 1 
             \end{pmatrix} 
            = \mathsf{\Lambda}\boldsymbol{z} + \boldsymbol{\alpha} \qquad \text{ with }\quad 
            \mathsf{\Lambda} = \begin{pmatrix}
                1 & 0 \\
                0 & 0
            \end{pmatrix} \quad\mbox{ and }\quad  \boldsymbol{\alpha} =
            \begin{pmatrix}
                0 \\
                1
            \end{pmatrix},
        \end{equation*}       
 and hence Hypothesis \ref{hyp-g} holds with 
        the given $\mathsf{\Lambda}$, $\boldsymbol{\alpha}$, and
        $\boldsymbol{g}\equiv  \boldsymbol{0}_2$. 
    Hypothesis \ref{hyp-explosion} is not needed, see part \ref{rem-explosion} of Remark \ref{remark-hyp}.
          Further, we get
        \begin{align*}
            \mathsf{m} &= \begin{pmatrix}
                \ev{f_{0,1}} & \ev{F_1^I} \\[1mm] 
                \ev{m_{0,1}} & \ev{M_1^I}
            \end{pmatrix}, & \mathsf{\Sigma}_1 &= \begin{pmatrix}
                \var{f_{0,1}} & \cov{f_{0,1}}{m_{0,1}} \\
                \cov{f_{0,1}}{m_{0,1}} & \var{m_{0,1}}
            \end{pmatrix}, \\
            \tilde{\mathsf{m}} &= \begin{pmatrix}
                \ev{f_{0,1}} & 0 \\
                \ev{m_{0,1}} & 0
            \end{pmatrix}, & \mathsf{\Sigma}_2 &= \begin{pmatrix}
                \var{F_1^I} & \cov{F_1^I}{M_1^I} \\[1mm]
                \cov{F_1^I}{M_1^I} & \var{M_1^I}
            \end{pmatrix}.
        \end{align*}
         Since $\ev{f_{0,1}}=1$  and $\tilde{\mathsf{m}}^k = \tilde{\mathsf{m}}$, $k\in\N$, the conditions of Hypothesis \ref{hyp-primitive-relaxed} are satisfied with
        \begin{align*}
        &\tilde{\boldsymbol{u}} = \frac{1}{1+\ev{m_{0,1}}} \begin{pmatrix}
                1 \\
                \ev{m_{0,1}}
            \end{pmatrix}, \qquad \tilde{\boldsymbol{v}} = \begin{pmatrix}
                1+\ev{m_{0,1}} \\
                0
            \end{pmatrix}, \qquad
            \mathsf{\Lambda}\tilde{\boldsymbol{u}} = \frac{1}{1+\ev{m_{0,1}}} \begin{pmatrix}
                1 \\
                0
            \end{pmatrix},\\[1mm]
         &\tilde{\mathsf{\Pi}} =  \lim_{k\to\infty} \tilde{\mathsf{m}}^k  
             = \tilde{\mathsf{m}}.
        \end{align*}
        Therefore, Theorem \ref{Thm_main}  implies that
        \begin{equation*}
            \left(\frac{1}{n} \begin{pmatrix}
                F_{\lfloor nt \rfloor} \\
                M_{\lfloor nt \rfloor}
            \end{pmatrix}\right)_{t\in\R_+}
            \stackrel{\mathcal{L}}{\longrightarrow} 
          \left(\frac{\mathcal{Z}_t}{1+\ev{m_{0,1}}} \begin{pmatrix}
                1 \\
                \ev{m_{0,1}}
            \end{pmatrix}\right)_{t\in\R_+} \qquad \text{as } n\to \infty,
        \end{equation*}
        where $(\mathcal{Z}_t)_{t\in\R_+}$ is the pathwise unique strong solution of the SDE
        \begin{equation*}
            \dif\mathcal{Z}_t = (1+\ev{m_{0,1}})\ev{F_1^I} \dif t + \sqrt{(1+\ev{m_{0,1}}) \var{f_{0,1}}\mathcal{Z}_t^+}\dif\mathcal{W}_t, \qquad t\in\R_+,
        \end{equation*}
        with initial value $\mathcal{Z}_0 = 0$, where $(\mathcal{W}_t)_{t\in\R_+}$ is a standard Wiener process. With the process $(\mathcal{X}_t)_{t\in\R_+}$ given by $\mathcal{X}_{t} = (1+\ev{m_{0,1}})^{-1} \mathcal{Z}_{t}$,  $t\in\R_+$, 
        we obtain the desired result.
\end{proof}

Next, we apply Theorem \ref{Thm_main} for a 2SBPI defined in \eqref{eq-definition} with
 a so-called self-fertilization mating function defined by $L(\bz):=z_1+z_2$, $\bz=(z_1,z_2)\in\R_+^2$,
 which allows both females and males have partenogenesis (asexual reproduction).
In this model, every individual (regardless whether it is a female or male) corresponds to 
 a mating unit and can have female and male offsprings independently of the other individuals.
There is a kind of phenomenon in nature as well, e.g., for aphids and for some reptiles. 
The following result can be considered as another new 
 contribution in the field of  Feller-type diffusion approximations of 2SBPIs.

\begin{corollary}\label{cor-2SBPI_2}
 Let $(F_k,M_k)_{k\in\Z_+}$ be a 2SBPI defined in \eqref{eq-definition}  with the self-fertilization mating function.
 Assume that $\ev{\|(F_0,M_0)\|^2}<\infty$,  $\ev{\|(f_{0,1},m_{0,1})\|^4}<\infty$,
  $\ev{\|(F_1^I,M_1^I)\|^4}<\infty$, and $\ev{f_{0,1}},\ev{m_{0,1}}\in(0,1)$ are such that $\ev{f_{0,1}} + \ev{m_{0,1}} = 1$.
Then
 \begin{equation*}
        (n^{-1} (F_{\lfloor nt \rfloor}, M_{\lfloor nt \rfloor}))_{t \in\R_+}
        \stackrel{\mathcal{L}}{\longrightarrow}
        (\mathcal{X}_t (\ev{f_{0,1}},\ev{m_{0,1}}))_{t \in\R_+} \qquad \text{as } n\to\infty,
 \end{equation*}
 where $(\mathcal{X}_t)_{t\in\R_+}$ is the pathwise unique strong solution of the SDE
 \begin{equation*}
        \dif\mathcal{X}_t = (\ev{F_1^I} + \ev{M_1^I})\dif t
             + \sqrt{ \var{f_{0,1} + m_{0,1}} \mathcal{X}_t^+} \dif\mathcal{W}_t,
             \qquad t \in\R_+,
 \end{equation*}
 with initial value $\mathcal{X}_0 = 0$, where $(\mathcal{W}_t)_{t\in\R_+}$ is a standard Wiener process.
\end{corollary}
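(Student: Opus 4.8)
The plan is to realize $(F_k,M_k)_{k\in\Z_+}$ as a two-type CMBP through part (iv) of Example \ref{Example1} and then invoke Theorem \ref{Thm_main}, following the scheme of the proof of Corollary \ref{cor-2SBPI}. A convenient simplification here is that, unlike the promiscuous mating function, the self-fertilization function $L(\boldsymbol{z}) = z_1 + z_2$ is already an affine-linear function of $\boldsymbol{z}$, so no preliminary almost sure reduction of the control (such as the step $L(F_n,M_n)=F_n$ in Corollary \ref{cor-2SBPI}) is needed. Setting
\[
\boldsymbol{Z}_k = \begin{pmatrix} F_k \\ M_k \end{pmatrix}, \qquad \boldsymbol{\phi}_k(\boldsymbol{z}) = \begin{pmatrix} z_1+z_2 \\ 1 \end{pmatrix}, \qquad \boldsymbol{X}_{k,j,1} = \begin{pmatrix} f_{k,j} \\ m_{k,j} \end{pmatrix}, \qquad \boldsymbol{X}_{k,j,2} = \begin{pmatrix} F_{k+1}^I \\ M_{k+1}^I \end{pmatrix},
\]
the process is directly a CMBP of the form \eqref{CMBP_def}.

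Because the control is deterministic, $\mathsf{\Gamma}(\boldsymbol{z}) = \mathsf{0}$ and $\kappa_i(\boldsymbol{z}) = 0$, so Hypotheses \ref{hyp-variance-control-o} and \ref{hyp-moment-4} hold trivially, while the fourth-moment assumptions on $(f_{0,1},m_{0,1})$ and $(F_1^I,M_1^I)$ together with $\ev{\|(F_0,M_0)\|^2}<\infty$ yield Hypothesis \ref{hyp-second-moment-Z0}. The deterministic control can be written as $\boldsymbol{\varepsilon}(\boldsymbol{z}) = \mathsf{\Lambda}\boldsymbol{z} + \boldsymbol{\alpha} + \boldsymbol{g}(\boldsymbol{z})$ with
\[
\mathsf{\Lambda} = \begin{pmatrix} 1 & 1 \\ 0 & 0 \end{pmatrix}, \qquad \boldsymbol{\alpha} = \begin{pmatrix} 0 \\ 1 \end{pmatrix}, \qquad \boldsymbol{g}\equiv\boldsymbol{0}_2,
\]
giving Hypothesis \ref{hyp-g}; since $\boldsymbol{g}\equiv\boldsymbol{0}_2$, Hypothesis \ref{hyp-explosion} is not needed by part \ref{rem-explosion} of Remark \ref{remark-hyp}.

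The crux is Hypothesis \ref{hyp-primitive-relaxed} together with the identification of the limiting coefficients. With $\mathsf{m} = (\boldsymbol{m}_1,\boldsymbol{m}_2)$ whose columns are the offspring and immigration means, a direct multiplication gives
\[
\tilde{\mathsf{m}} = \mathsf{m}\mathsf{\Lambda} = \begin{pmatrix} \ev{f_{0,1}} & \ev{f_{0,1}} \\ \ev{m_{0,1}} & \ev{m_{0,1}} \end{pmatrix},
\]
a strictly positive (hence primitive) matrix with $\det\tilde{\mathsf{m}}=0$ and $\tr{\tilde{\mathsf{m}}} = \ev{f_{0,1}} + \ev{m_{0,1}}$, so its eigenvalues are $0$ and $\ev{f_{0,1}} + \ev{m_{0,1}}$; this is exactly where the criticality hypothesis $\ev{f_{0,1}}+\ev{m_{0,1}}=1$ forces the Perron--Frobenius eigenvalue to equal $1$. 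I would then read off the eigen-data $\tilde{\boldsymbol{u}} = (\ev{f_{0,1}},\ev{m_{0,1}})^\top$ (already normalized, its entries summing to $1$) and $\tilde{\boldsymbol{v}} = (1,1)^\top$, check $\tilde{\boldsymbol{v}}^\top\tilde{\boldsymbol{u}}=1$ and $\mathsf{\Lambda}\tilde{\boldsymbol{u}} = (1,0)^\top\in\R_+^2$, and note that $\tilde{\mathsf{m}}^k = \tilde{\mathsf{m}} = \tilde{\boldsymbol{u}}\tilde{\boldsymbol{v}}^\top = \tilde{\mathsf{\Pi}}$ for all $k\geq 1$, so part (c) holds with any $\tilde{r}\in(0,1)$; alternatively this follows at once from part \ref{rem-primitive} of Remark \ref{remark-hyp}.

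Finally I would substitute into Theorem \ref{Thm_main}. The drift coefficient is $\tilde{\boldsymbol{v}}^\top\mathsf{m}\boldsymbol{\alpha} = \ev{F_1^I}+\ev{M_1^I}$, and since $\mathsf{\Lambda}\tilde{\boldsymbol{u}} = (1,0)^\top$ the operator $\odot$ selects only $\mathsf{\Sigma}_1 = \var{(f_{0,1},m_{0,1})^\top}$, so the diffusion coefficient $\tilde{\boldsymbol{v}}^\top((\mathsf{\Lambda}\tilde{\boldsymbol{u}})\odot\boldsymbol{\mathsf{\Sigma}})\tilde{\boldsymbol{v}}$ collapses to the sum of all entries of $\mathsf{\Sigma}_1$, namely $\var{f_{0,1}} + 2\cov{f_{0,1}}{m_{0,1}} + \var{m_{0,1}} = \var{f_{0,1}+m_{0,1}}$. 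Because $\tilde{\boldsymbol{v}}^\top\tilde{\boldsymbol{u}}=1$, no post hoc rescaling of the limit (as was needed in Corollary \ref{cor-2SBPI}) is required, and the limit direction is precisely $\tilde{\boldsymbol{u}} = (\ev{f_{0,1}},\ev{m_{0,1}})^\top$, so with $\mathcal{X}_t := \mathcal{Z}_t$ the conclusion follows. I expect the only genuinely instructive steps — rather than true obstacles — to be these two: recognizing that $\ev{f_{0,1}}+\ev{m_{0,1}}=1$ is precisely the criticality condition $\tilde{\rho}=1$, and verifying that the quadratic form $\tilde{\boldsymbol{v}}^\top\mathsf{\Sigma}_1\tilde{\boldsymbol{v}}$ with $\tilde{\boldsymbol{v}}=(1,1)^\top$ condenses the full offspring covariance matrix into the scalar variance of the total offspring $f_{0,1}+m_{0,1}$ of a single individual.
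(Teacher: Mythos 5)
Your proposal is correct and follows essentially the same route as the paper's own proof: rewriting the 2SBPI as a two-type CMBP with $\mathsf{\Lambda}=\begin{pmatrix}1&1\\0&0\end{pmatrix}$, $\boldsymbol{\alpha}=(0,1)^\top$, $\boldsymbol{g}\equiv\boldsymbol{0}_2$, verifying Hypotheses \ref{hyp-second-moment-Z0}--\ref{hyp-primitive-relaxed} with the same eigen-data $\tilde{\boldsymbol{u}}=(\ev{f_{0,1}},\ev{m_{0,1}})^\top$, $\tilde{\boldsymbol{v}}=(1,1)^\top$, and reading off the drift $\ev{F_1^I}+\ev{M_1^I}$ and diffusion coefficient $\tilde{\boldsymbol{v}}^\top\mathsf{\Sigma}_1\tilde{\boldsymbol{v}}=\var{f_{0,1}+m_{0,1}}$ from Theorem \ref{Thm_main}. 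Your added observations (no analogue of the almost-sure reduction $L(F_n,M_n)=F_n$, and no post hoc rescaling since $\tilde{\boldsymbol{v}}^\top\tilde{\boldsymbol{u}}=1$) match the structure of the paper's argument exactly.
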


\begin{proof}
Let us rewrite $(F_k,M_k)_{k\in\Z_+}$ as a CMBP $(\bZ_k)_{k\in\Z_+}$ (see  part (ii) of Example \ref{Example1}).
We want to apply Theorem \ref{Thm_main} for $(\bZ_k)_{k\in\Z_+}$.
The control distributions are deterministic, so Hypothesis \ref{hyp-variance-control-o} and \ref{hyp-moment-4} are trivially satisfied. In addition,  we get
    \begin{align*}
        \ev{\|\bZ_0\|^2} &= \ev{\|(F_0,M_0)\|^2} < \infty, \\
        \ev{\|\boldsymbol{X}_{0,1,1}\|^4} &= \ev{\|(f_{0,1},m_{0,1})\|^4} < \infty, \\
        \ev{\|\boldsymbol{X}_{0,1,2}\|^4} &= \ev{\|(F_1^I,M_1^I)\|^4} < \infty, \\
        \ev{\|\boldsymbol{\phi}_0(\bz)\|^4} &=  ( (z_1+z_2)^2+1)^2 < \infty, \qquad \bz\in\Z_+^2 ,
    \end{align*}
     yielding that Hypothesis \ref{hyp-second-moment-Z0} holds.
For each $\bz\in\Z_+^2$, we have
    \begin{equation*}
             \boldsymbol{\phi}_0 (\boldsymbol{z})
             = \begin{pmatrix}
                z_1+z_2 \\
                 1
             \end{pmatrix}
            = \mathsf{\Lambda}\boldsymbol{z} + \boldsymbol{\alpha} \qquad \text{ with }\quad
            \mathsf{\Lambda} = \begin{pmatrix}
                1 & 1 \\
                0 & 0
            \end{pmatrix} \quad\mbox{ and }\quad  \boldsymbol{\alpha} =
            \begin{pmatrix}
                0 \\
                1
            \end{pmatrix},
        \end{equation*}
 and hence Hypothesis \ref{hyp-g} holds with the given $\mathsf{\Lambda}$, $\boldsymbol{\alpha}$, and
  $\boldsymbol{g}\equiv  \boldsymbol{0}_2$.
Hypothesis \ref{hyp-explosion} is not needed, see part \ref{rem-explosion} of Remark \ref{remark-hyp}.
Further, we get
        \begin{align*}
            \mathsf{m} &= \begin{pmatrix}
                \ev{f_{0,1}} & \ev{F_1^I} \\[1mm]
                \ev{m_{0,1}} & \ev{M_1^I}
            \end{pmatrix}, & \mathsf{\Sigma}_1 &= \begin{pmatrix}
                \var{f_{0,1}} & \cov{f_{0,1}}{m_{0,1}} \\
                \cov{f_{0,1}}{m_{0,1}} & \var{m_{0,1}}
            \end{pmatrix}, \\
            \tilde{\mathsf{m}} &= \begin{pmatrix}
                \ev{f_{0,1}} & \ev{f_{0,1}} \\
                \ev{m_{0,1}} & \ev{m_{0,1}}
            \end{pmatrix}, & \mathsf{\Sigma}_2 &= \begin{pmatrix}
                \var{F_1^I} & \cov{F_1^I}{M_1^I} \\[1mm]
                \cov{F_1^I}{M_1^I} & \var{M_1^I}
            \end{pmatrix}.
        \end{align*}
The two eigenvalues of $\tilde{\mathsf{m}}$ are $0$ and $\ev{f_{0,1}}+\ev{m_{0,1}}=1$, and consequently the spectral radius of $\tilde{\mathsf{m}}$ is $1$. 
Using also that $\ev{f_{0,1}},\ev{m_{0,1}}\in(0,1)$, we have that
 $\tilde{\mathsf{m}}$ is a primitive matrix with Frobenius--Perron eigenvalue $1$.
Note also that $\tilde{\mathsf{m}}^k = \tilde{\mathsf{m}}$, $k\in\N$, since $\ev{f_{0,1}}+\ev{m_{0,1}}=1$. 
Taking into these considerations, one can check that the conditions of Hypothesis \ref{hyp-primitive-relaxed} are satisfied with
 \begin{align*}
   \tilde{\boldsymbol{u}} 
          = \begin{pmatrix}
                \ev{f_{0,1}} \\
                \ev{m_{0,1}}
            \end{pmatrix}, \qquad 
    \tilde{\boldsymbol{v}} 
         = \begin{pmatrix}
                1 \\
                1
            \end{pmatrix}, \qquad
            \mathsf{\Lambda}\tilde{\boldsymbol{u}} 
            = \begin{pmatrix}
                1 \\
                0
            \end{pmatrix},\qquad 
         \tilde{\mathsf{\Pi}} =  \lim_{k\to\infty} \tilde{\mathsf{m}}^k
                               = \tilde{\mathsf{m}},
 \end{align*}
 and the constants $\tilde{c}\in\R_{++}$ and $\tilde{r}\in(0,1)$ in part (c) of Hypothesis \ref{hyp-primitive-relaxed} can be chosen arbitrarily.
Therefore, Theorem \ref{Thm_main}  implies that
        \begin{equation*}
            \left(\frac{1}{n} \begin{pmatrix}
                F_{\lfloor nt \rfloor} \\
                M_{\lfloor nt \rfloor}
            \end{pmatrix}\right)_{t\in\R_+}
            \stackrel{\mathcal{L}}{\longrightarrow}
          \left(\mathcal{X}_t 
             \begin{pmatrix}
                \ev{f_{0,1}} \\
                \ev{m_{0,1}}
            \end{pmatrix}\right)_{t\in\R_+} \qquad \text{as } n\to \infty,
        \end{equation*}
 where $(\mathcal{X}_t)_{t\in\R_+}$ is the pathwise unique strong solution of the SDE
  \begin{align*}
   \dif\mathcal{X}_t 
   & =   \begin{pmatrix}
          1 & 1 \\
        \end{pmatrix}
        \mathsf{m}
        \begin{pmatrix}
          0 \\
          1 \\
        \end{pmatrix}
        \dif t    
       + \sqrt{\begin{pmatrix}
                1 & 1 \\
              \end{pmatrix} 
               \big( 1\cdot \mathsf{\Sigma}_1 + 0 \cdot \mathsf{\Sigma}_2 \big)
        \begin{pmatrix}
          1 \\
          1 \\
        \end{pmatrix} \mathcal{X}^+_t
        }  \dif\mathcal{W}_t \\       
     &= (\ev{F_1^I} + \ev{M_1^I})\dif t
        + \sqrt{(\var{f_{0,1}} + 2\cov{f_{0,1}}{m_{0,1}} + \var{m_{0,1}})\mathcal{X}^+_t}         
          \dif\mathcal{W}_t, \qquad t\in\R_+,
  \end{align*}
  with initial value $\mathcal{X}_0 = 0$, where $(\mathcal{W}_t)_{t\in\R_+}$ is a standard Wiener process.
Since $\var{f_{0,1}} + 2\cov{f_{0,1}}{m_{0,1}} + \var{m_{0,1}} = \var{f_{0,1}+m_{0,1}}$, this yields the assertion.  
\end{proof}

\begin{remark}\label{Rem_Bansaye}
    Scaling limits for a class of processes that combine classical asexual Galton--Watson processes and two-sex Galton--Watson branching processes without immigration (introduced by Daley \cite{DALEY_1968}) have been recently studied in Bansaye et al.\ \cite{bcms23}. However, this new family of processes does not include the class of two-sex Galton--Watson branching processes with immigration as a particular case, so the results in Bansaye et al.\ \cite{bcms23} do not imply our Corollaries \ref{cor-2SBPI} and \ref{cor-2SBPI_2}. 
    Even if we recover two-sex Galton--Watson branching processes without immigration  as particular cases of the model (1)-(2) in Bansaye et al.\ \cite{bcms23}  (using their notations, with the special choices $\mathcal{E}_{n,p}^{f,N} := -1$ and $\mathcal{E}_{n,p}^{m,N}:=-1$, $n,p\in\N$,  which intuitively mean that the individuals (females or males) do not survive in the next generation), then their  Theorem 2.1 cannot be applied, because their assumption (A1)  does not hold in this case. 
    Furthermore, note that 
    for two-sex Galton--Watson branching processes without immigration, our Corollary \ref{cor-2SBPI} cannot be applied as well,
    since the assumption $\prob{M_1^I=0}=0$ does not hold in this case, however Corollary  \ref{cor-2SBPI_2}
    can be applied and the limit process of the scaled two-sex Galton--Watson branching processes without immigration in question is the identically zero process.
    \proofendb
\end{remark}

Next, we formulate another corollary of Theorem \ref{Thm_main}, namely, we derive a limit 
 distribution for the relative frequencies of distinct types of individuals.
For different models, one can find similar results 
 in Jagers \cite[Corollary 1]{JAGERS_1969}, 
 in Yakovlev and Yanev  \cite[Proposition 1]{YAKOVLEV_YANEV_2009},   
 \cite[Theorem 2]{YAKOVLEV_YANEV_2010}, and in Barczy and Pap \cite[Corollary 4.1]{BARCZY_PAP_2016}.

Recall that if $\tilde{\mathsf{m}}$ is primitive with  
 Perron--Frobenius eigenvalue $1$, then $\Tilde{\boldsymbol{u}}$
 and $\Tilde{\boldsymbol{v}}$ denote the right and left Perron--Frobenius
 eigenvectors corresponding to $1$, respectively.

\begin{corollary}\label{Cor_rel_frequency}
Suppose that Hypotheses \ref{hyp-second-moment-Z0}--\ref{hyp-moment-4} and \ref{hyp-explosion} hold for the CMBP 
 $(\boldsymbol{Z}_{k})_{k\in\Z_+}$ given in \eqref{CMBP_def}. 
Assume also that the matrix $\tilde{\mathsf{m}}$ is primitive with 
 Perron--Frobenius eigenvalue $1$, and $\mathsf{\Lambda} \Tilde{\boldsymbol{u}}\in\R_{+}^p$.
If, in addition, $\tilde{\boldsymbol{v}}^\top \mathsf{m}\boldsymbol{\alpha}
 > 0$, then for all $t>0$ and $i,j\in\{1,\ldots,p\}$, we get 
 \[
 \1_{\{ \boldsymbol{e}_j^\top \bZ_{\lfloor nt\rfloor}\neq 0\}}
     \frac{\boldsymbol{e}_i^\top \bZ_{\lfloor nt\rfloor}}
          {\boldsymbol{e}_j^\top \bZ_{\lfloor nt\rfloor}}
          \stackrel{\mathrm{P}}{\longrightarrow}
          \frac{\boldsymbol{e}_i^\top \tilde{\boldsymbol{u}}}
                      {\boldsymbol{e}_j^\top \tilde{\boldsymbol{u}}}
  \qquad \text{and}\qquad 
  \1_{\{ \bZ_{\lfloor nt\rfloor}\neq  \boldsymbol{0}_p\} }
     \frac{\boldsymbol{e}_i^\top \bZ_{\lfloor nt\rfloor}}
          {\sum_{k=1}^p \boldsymbol{e}_k^\top \bZ_{\lfloor nt\rfloor}}
        \stackrel{\mathrm{P}}{\longrightarrow}
        \boldsymbol{e}_i^\top \tilde{\boldsymbol{u}}
        \qquad \text{as $n\to\infty$.}
 \]
\end{corollary}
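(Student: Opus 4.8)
The plan is to deduce both statements from the functional limit theorem (Theorem~\ref{Thm_main}) by a continuous mapping argument at a single fixed time, after first observing that the normalizing factor $n^{-1}$ disappears from the ratios under consideration.

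First I would note that, since $\boldsymbol{\mathcal{Z}}_t^{(n)} = n^{-1}\bZ_{\lfloor nt\rfloor}$ and multiplication by the positive scalar $n^{-1}$ changes neither whether a coordinate vanishes nor the value of a ratio of two coordinates, we have for every $n\in\N$ and $t>0$
\[
 \1_{\{\boldsymbol{e}_j^\top \bZ_{\lfloor nt\rfloor}\neq 0\}}
     \frac{\boldsymbol{e}_i^\top \bZ_{\lfloor nt\rfloor}}
          {\boldsymbol{e}_j^\top \bZ_{\lfloor nt\rfloor}}
  = \1_{\{\boldsymbol{e}_j^\top \boldsymbol{\mathcal{Z}}_t^{(n)}\neq 0\}}
     \frac{\boldsymbol{e}_i^\top \boldsymbol{\mathcal{Z}}_t^{(n)}}
          {\boldsymbol{e}_j^\top \boldsymbol{\mathcal{Z}}_t^{(n)}},
\]
and analogously for the second ratio. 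By part~\ref{rem-primitive} of Remark~\ref{remark-hyp}, the primitivity of $\tilde{\mathsf{m}}$ together with $\tilde{\rho}=1$ and $\mathsf{\Lambda}\tilde{\boldsymbol{u}}\in\R_+^p$ ensures Hypothesis~\ref{hyp-primitive-relaxed}, so Theorem~\ref{Thm_main} applies and gives $(\boldsymbol{\mathcal{Z}}_t^{(n)})_{t\in\R_+}\stackrel{\mathcal{L}}{\longrightarrow}(\mathcal{Z}_t\tilde{\boldsymbol{u}})_{t\in\R_+}$. Since the limit process has continuous sample paths, every fixed $t$ is almost surely a continuity point, so the coordinate projection at time $t$ is continuous off a null set of the limit law, and the continuous mapping theorem (Theorem~\ref{cont-map-thm}) yields the fixed-time convergence $\boldsymbol{\mathcal{Z}}_t^{(n)}\stackrel{\mathcal{L}}{\longrightarrow}\mathcal{Z}_t\tilde{\boldsymbol{u}}$ in $\R^p$.

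The second step is to locate the limit away from the discontinuity sets of the two ratio maps. Because $\tilde{\boldsymbol{v}}^\top\mathsf{m}\boldsymbol{\alpha}>0$, Remark~\ref{rem-unique} shows that $\mathcal{Z}_t$ is Gamma distributed; in particular $\mathcal{Z}_t>0$ almost surely for every $t>0$. By the Perron--Frobenius theorem the right eigenvector $\tilde{\boldsymbol{u}}$ of the primitive matrix $\tilde{\mathsf{m}}$ has strictly positive coordinates, so $\boldsymbol{e}_j^\top\tilde{\boldsymbol{u}}=\tilde{u}_j>0$ and $\sum_{k=1}^p\boldsymbol{e}_k^\top\tilde{\boldsymbol{u}}=1$. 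Consequently $\boldsymbol{e}_j^\top(\mathcal{Z}_t\tilde{\boldsymbol{u}})=\mathcal{Z}_t\tilde{u}_j>0$ and $\sum_{k=1}^p\boldsymbol{e}_k^\top(\mathcal{Z}_t\tilde{\boldsymbol{u}})=\mathcal{Z}_t>0$ almost surely. Hence, on $\R_+^p$, where all the vectors $\boldsymbol{\mathcal{Z}}_t^{(n)}$ and the limit take values, the limit vector almost surely lies in the region where the maps $\boldsymbol{x}\mapsto\1_{\{x_j\neq 0\}}x_i/x_j$ and $\boldsymbol{x}\mapsto\1_{\{\boldsymbol{x}\neq\boldsymbol{0}_p\}}x_i/\sum_{k=1}^p x_k$ are continuous, their only discontinuities occurring on $\{x_j=0\}$ and $\{\boldsymbol{x}=\boldsymbol{0}_p\}$, respectively.

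Finally I would apply the continuous mapping theorem once more to these two maps, whose discontinuity sets carry no mass under the limit law by the previous step; hence
\[
 \1_{\{\boldsymbol{e}_j^\top \boldsymbol{\mathcal{Z}}_t^{(n)}\neq 0\}}
     \frac{\boldsymbol{e}_i^\top \boldsymbol{\mathcal{Z}}_t^{(n)}}
          {\boldsymbol{e}_j^\top \boldsymbol{\mathcal{Z}}_t^{(n)}}
 \stackrel{\mathcal{L}}{\longrightarrow}
 \frac{\mathcal{Z}_t\tilde{u}_i}{\mathcal{Z}_t\tilde{u}_j}
 = \frac{\boldsymbol{e}_i^\top\tilde{\boldsymbol{u}}}{\boldsymbol{e}_j^\top\tilde{\boldsymbol{u}}},
\]
and similarly the second ratio converges in law to $\mathcal{Z}_t\tilde{u}_i/\mathcal{Z}_t=\boldsymbol{e}_i^\top\tilde{\boldsymbol{u}}$, where $\mathcal{Z}_t$ cancels and the normalization $\sum_{k=1}^p\tilde{u}_k=1$ is used. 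Both limits are deterministic constants, and convergence in distribution to a constant is equivalent to convergence in probability, which gives the two claimed statements. The main subtlety is the second step: one must guarantee that the limiting denominators are almost surely nonzero, which is precisely where the hypothesis $\tilde{\boldsymbol{v}}^\top\mathsf{m}\boldsymbol{\alpha}>0$ (via the Gamma law of $\mathcal{Z}_t$) and the strict positivity of the Perron--Frobenius eigenvector enter; without the former the limit could vanish and the ratios would be genuinely indeterminate.
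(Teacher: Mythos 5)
Your proposal is correct and follows essentially the same route as the paper's proof: fixed-time convergence from Theorem \ref{Thm_main}, almost sure positivity of the limit via Remark \ref{rem-unique} and the strict positivity of the Perron--Frobenius eigenvector $\tilde{\boldsymbol{u}}$, the continuous mapping theorem applied to the ratio maps off their null discontinuity sets, and finally the equivalence of convergence in law and in probability for constant limits. The only cosmetic differences are that the paper projects onto the pair $(\boldsymbol{e}_i^\top \bZ_{\lfloor nt\rfloor},\boldsymbol{e}_j^\top \bZ_{\lfloor nt\rfloor})$ and works with an explicit function on $\R^2$ rather than on $\R_+^p$, and that in the degenerate case $\tilde{\boldsymbol{v}}^\top((\mathsf{\Lambda}\tilde{\boldsymbol{u}})\odot\boldsymbol{\mathsf{\Sigma}})\tilde{\boldsymbol{v}}=0$ the law of $\mathcal{Z}_t$ is a point mass at $\tilde{\boldsymbol{v}}^\top\mathsf{m}\boldsymbol{\alpha}\,t$ rather than a Gamma distribution, though positivity (which is all you use) holds in both cases.
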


\begin{remark}
The indicator functions $\1_{\{ \boldsymbol{e}_j^\top \bZ_{\lfloor nt\rfloor}\ne 0\}}$ and $\1_{\{ \bZ_{\lfloor nt\rfloor}\ne  \boldsymbol{0}_p\} }$ are needed in Corollary \ref{Cor_rel_frequency},
 since it can happen that $\prob{\bZ_{\lfloor nt\rfloor} =  \boldsymbol{0}_p}>0$ for some $t>0$.
\proofendb
\end{remark}

\noindent{\textit{Proof of Corollary \ref{Cor_rel_frequency}}.}
For all $t>0$ and $i,j\in\{1,\ldots,p\}$, Theorem \ref{Thm_main} yields that
 \[
  \frac{1}{n}\Big(\boldsymbol{e}_i^\top \bZ_{\lfloor nt\rfloor}, 
                   \boldsymbol{e}_j^\top \bZ_{\lfloor nt\rfloor} \Big)
    \stackrel{\mathcal{L}}{\longrightarrow} 
     \Big(\boldsymbol{e}_i^\top \tilde{\boldsymbol{u}}\,\mathcal{Z}_t,
          \boldsymbol{e}_j^\top \tilde{\boldsymbol{u}}\,\mathcal{Z}_t \Big)    
          \qquad \text{as $n\to\infty$.}
 \]
The function $g:\R^ 2\to \R$ defined by 
 \[
   g(x,y)\colonequals \begin{cases}
               \frac{x}{y}  & \text{if $x\in\R$ and $y\ne 0$,}\\
               0 & \text{if $x\in\R$ and $y=0$,}
           \end{cases}
 \]
 is continuous on the set $\R\times (\R\setminus\{0\})$, and the distribution of 
 $ \Big(\boldsymbol{e}_i^\top \tilde{\boldsymbol{u}}\,\mathcal{Z}_t,
          \boldsymbol{e}_j^\top \tilde{\boldsymbol{u}}\,\mathcal{Z}_t \Big)$
 is concentrated on this set, since, by Remark \ref{rem-unique}, $\prob{\mathcal{Z}_t >0}=1$ for all $t>0$ and $\boldsymbol{e}_j^\top \tilde{\boldsymbol{u}}>0$.
 Hence the continuous mapping theorem yields that 
  \begin{align*}
    \1_{\{\boldsymbol{e}_j^\top \bZ_{\lfloor nt\rfloor}\ne 0\}}
     \frac{\boldsymbol{e}_i^\top \bZ_{\lfloor nt\rfloor}}
          {\boldsymbol{e}_j^\top \bZ_{\lfloor nt\rfloor}}
    &= g(n^{-1}\boldsymbol{e}_i^\top \bZ_{\lfloor nt\rfloor}, 
     n^{-1}\boldsymbol{e}_j^\top \bZ_{\lfloor nt\rfloor}) \\   
    &\stackrel{\mathcal{L}}{\longrightarrow} 
     g(\boldsymbol{e}_i^\top \tilde{\boldsymbol{u}}\,\mathcal{Z}_t, \boldsymbol{e}_j^\top \tilde{\boldsymbol{u}}
         \,\mathcal{Z}_t)   
   = \1_{\{ \boldsymbol{e}_j^\top \tilde{\boldsymbol{u}} \,\mathcal{Z}_t\ne 0\}}
         \frac{\boldsymbol{e}_i^\top \tilde{\boldsymbol{u}}\,\mathcal{Z}_t}
           {\boldsymbol{e}_j^\top \tilde{\boldsymbol{u}}\,\mathcal{Z}_t}        
   = \frac{\boldsymbol{e}_i^\top \tilde{\boldsymbol{u}} }
           {\boldsymbol{e}_j^\top \tilde{\boldsymbol{u}}}
   \qquad \text{as $n\to\infty$,}       
   \end{align*}
 thus we obtain the first convergence (using also that the limit is not random, so
 convergence in distribution yields convergence in probability in this case).

Similarly, for each $i\in\{1,\ldots,p\}$, the function $h:\R^ p\to\R$ defined by
 \[
  h(x_1,\ldots,x_p)\colonequals \begin{cases}
               \frac{x_i}{\sum_{i=1}^p x_k}  & \text{if $(x_1,\ldots,x_p)\in\R^p$ 
                                              and $\sum_{k=1}^p x_k\ne 0$,}\\
               0 & \text{if $(x_1,\ldots,x_p)\in\R^p$ 
                                              and $\sum_{k=1}^p x_k = 0$,}
           \end{cases}
 \]
 is continuous on the set $\R^ p \setminus\{(x_1,\ldots,x_p)\in\R^p : \sum_{k=1}^p x_k =0\}$,
 and the distribution of $\tilde{\boldsymbol{u}}\mathcal{Z}_t$ is concentrated on this set,
 since $\prob{\mathcal{Z}_t >0}=1$, $t>0$, and $\tilde{\boldsymbol{u}}\in\R_{++}^p$
 imply that $\prob{\sum_{k=1}^p \boldsymbol{e}_k^\top \tilde{\boldsymbol{u}} \mathcal{Z}_t =0 } =0$.
Using that $\1_{\{ \bZ_{\lfloor nt\rfloor} \ne  \boldsymbol{0}_p\} } 
 = \1_{\{ \sum_{k=1}^p \boldsymbol{e}_k^\top \bZ_{\lfloor nt\rfloor} \ne 0\}}$,
 the continuous mapping theorem yields that 
 \begin{align*}
 \1_{\{ \bZ_{\lfloor nt\rfloor}\ne  \mathbf{0}_p\} }
     \frac{\boldsymbol{e}_i^\top \bZ_{\lfloor nt\rfloor}}
          {\sum_{k=1}^p \boldsymbol{e}_k^\top \bZ_{\lfloor nt\rfloor}}
  &= h(n^{-1}\bZ_{\lfloor nt\rfloor}) \\        
  & \stackrel{\mathcal{L}}{\longrightarrow} 
    h(\tilde{\boldsymbol{u}} \mathcal{Z}_t )
    = \1_{\{ \sum_{k=1}^p \boldsymbol{e}_k^\top \tilde{\boldsymbol{u}} \mathcal{Z}_t \ne 0 \}}
         \frac{\boldsymbol{e}_i^\top \tilde{\boldsymbol{u}} \mathcal{Z}_t}
            {\sum_{k=1}^p\boldsymbol{e}_k^\top \tilde{\boldsymbol{u}} \mathcal{Z}_t}         
   = \frac{\boldsymbol{e}_i^\top \tilde{\boldsymbol{u}} }
            {\sum_{k=1}^p\boldsymbol{e}_k^\top \tilde{\boldsymbol{u}} }
   = \boldsymbol{e}_i^\top \tilde{\boldsymbol{u}}
 \end{align*}
 as $n\to\infty$, where we also used that the 
sum of coordinates of $\tilde{\boldsymbol{u}}$ is 1 (see part (ii) of Lemma \ref{lemma-primitive}).
Thus we obtain the second convergence as well (using again that the limit is not random).
\proofend

\section{Proof of Theorem \ref{Thm_main}}\label{section-proof}

The proof is divided in four steps. 
In the first three Steps 1, 2 and 3, we introduce some auxiliary stochastic 
 processes, prove their convergence in distribution, and in Step 4,
 we discuss that how Steps 1--3 yield the assertion.
The detailed proofs for Steps 1--3 can be found after Step 4.

\subsection*{Step 1}\label{step1}

From the  process $(\boldsymbol{Z}_{k})_{k\in\Z_+},$ we define a  martingale difference sequence with respect to the filtration $(\mathcal{F}_{k})_{k\in\Z_+}$ given by
    \begin{equation}\label{eq-def-mtg-dif}
        \boldsymbol{M}_{k}  \colonequals  \boldsymbol{Z}_{k} - \evcond{\boldsymbol{Z}_{k}}{\mathcal{F}_{k-1}} , \qquad k\in\N ,
    \end{equation}
    and we also consider the following sequence of random step processes
    \begin{equation}\label{eq-def-step-from-mtg}
        \boldsymbol{\mathcal{M}}_t^{(n)} = n^{-1} \left( \boldsymbol{Z}_{0} + \sum_{k=1}^{\lfloor nt\rfloor} \boldsymbol{M}_{k} \right), \quad\quad t\in\R_+ , \quad n\in\N.
    \end{equation}
    
    We will prove that
    \begin{equation}\label{eq-conv-M}
        \boldsymbol{\mathcal{M}}^{(n)} \stackrel{\mathcal{L}}{\longrightarrow} \boldsymbol{\mathcal{M}} \qquad \text{as } n\to\infty, 
    \end{equation}
    where $(\boldsymbol{\mathcal{M}}_t)_{t\in\R_+}$ is the pathwise unique strong solution of the SDE 
    \begin{equation}\label{eq-SDE-M}
        \dif \boldsymbol{\mathcal{M}}_t = \sqrt{(\mathsf{\Lambda}\tilde{\mathsf{\Pi}}(\boldsymbol{\mathcal{M}}_t+t\mathsf{m}\boldsymbol{\alpha}))^+ \odot \boldsymbol{\mathsf{\Sigma}}} \dif \boldsymbol{\mathcal{W}}_t, \qquad t\in\R_+,
    \end{equation}
    with initial value $\boldsymbol{0}_p$, where $(\boldsymbol{\mathcal{W}}_t)_{t\in\R_+}$ is a $p$--dimensional standard Wiener process.

\subsection*{Step 2}
 
We will show that \eqref{eq-conv-M} implies 
 \begin{equation}\label{help_psi_n_to_psi}
    \boldsymbol{\Psi}_n(\boldsymbol{\mathcal{M}}^{(n)}) \stackrel{\mathcal{L}}{\longrightarrow} \boldsymbol{\Psi}(\boldsymbol{\mathcal{M}})
        \qquad \text{as } n\to\infty,
    \end{equation} 
    where the mappings $\boldsymbol{\Psi}:\mathbf{D}(\R_+,\R^p)\to\mathbf{D}(\R_+,\R^p)$ and $\boldsymbol{\Psi}_n:\mathbf{D}(\R_+,\R^p)\to\mathbf{D}(\R_+,\R^p)$, $n\in\N$, are given by
    \begin{align}\label{help_psi}
        \boldsymbol{\Psi}(\boldsymbol{f})(t) & \colonequals \tilde{\mathsf{\Pi}}(\boldsymbol{f}(t)+t\mathsf{m}\boldsymbol{\alpha}),\\ \label{help_psi_n}
        \boldsymbol{\Psi}_n(\boldsymbol{f})(t) &\colonequals \tilde{\mathsf{m}}^{\lfloor nt\rfloor}\boldsymbol{f}(0) + \sum_{j=1}^{\lfloor nt\rfloor} \tilde{\mathsf{m}}^{\lfloor nt\rfloor - j} \left(\boldsymbol{f}\left(\frac{j}{n}\right) - \boldsymbol{f}\left(\frac{j-1}{n}\right) + \frac{1}{n}\mathsf{m}\boldsymbol{\alpha}\right),
    \end{align}
    for $\boldsymbol{f}\in\mathbf{D}(\R_+,\R^p)$, $t\in\R_+$, $n\in\N$.

\subsection*{Step 3}\label{step3}

We will check that $\boldsymbol{\mathcal{Z}}^{(n)} = \boldsymbol{\Psi}_n(\boldsymbol{\mathcal{M}}^{(n)}) + \boldsymbol{\mathcal{V}}^{(n)}$, $n\in\N$, and
    \begin{equation}\label{eq-conv-sum-seq}
        \boldsymbol{\mathcal{Z}}^{(n)} \stackrel{\mathcal{L}}{\longrightarrow} \boldsymbol{\Psi}(\boldsymbol{\mathcal{M}})
        \qquad \text{as } n\to\infty,
    \end{equation}
    where $(\boldsymbol{\mathcal{V}}_t^{(n)})_{t\in\R_+},$ $n\in\N,$ is another sequence of random step processes defined by
    \begin{equation}\label{help_V}
        \boldsymbol{\mathcal{V}}_t^{(n)} 
         \colonequals  n^{-1} \sum_{j=1}^{\lfloor nt\rfloor} 
             \tilde{\mathsf{m}}^{\lfloor nt\rfloor - j} 
             \mathsf{m}\boldsymbol{g}(\boldsymbol{Z}_{j-1}), \qquad t\in\R_+,
             \quad n\in\N.
    \end{equation}
    
\subsection*{Step 4}\label{step4}

  As a consequence of Steps 1--3, one can easily derive
  \eqref{eq-convergence-main-thm}.
  Namely, let us define 
    \begin{align}\label{eq-definition-limit-process-main-thm-multi-dim}
        \boldsymbol{\mathcal{Z}}_t &\colonequals \boldsymbol{\Psi}(\boldsymbol{\mathcal{M}}_t) = \tilde{\mathsf{\Pi}}(\boldsymbol{\mathcal{M}}_t+t\mathsf{m}\boldsymbol{\alpha}), \qquad t\in\R_+,\\ 
        \mathcal{Y}_t &\colonequals \tilde{\boldsymbol{v}}^\top \boldsymbol{\mathcal{Z}}_t = \tilde{\boldsymbol{v}}^\top (\boldsymbol{\mathcal{M}}_t+t\mathsf{m}\boldsymbol{\alpha}), \qquad t\in\R_+, \label{eq-definition-limit-process-main-thm-uni-dim}
    \end{align}
    where we used that $\tilde{\boldsymbol{v}}^\top \tilde{\mathsf{\Pi}} = 
         \tilde{\boldsymbol{v}}^\top \tilde{\boldsymbol{u}} \tilde{\boldsymbol{v}}^\top = \tilde{\boldsymbol{v}}^\top$  (see Hypothesis \ref{hyp-primitive-relaxed}).%
    
    Then $\boldsymbol{\mathcal{Z}} = \mathcal{Y} \tilde{\boldsymbol{u}}$,
    since 
    \[
      \mathcal{Y}_t \tilde{\boldsymbol{u}}
         = \tilde{\boldsymbol{v}}^\top (\boldsymbol{\mathcal{M}}_t+t\mathsf{m}  
           \boldsymbol{\alpha}) \tilde{\boldsymbol{u}}
         = \tilde{\boldsymbol{u}} (\tilde{\boldsymbol{v}}^\top 
               \boldsymbol{\mathcal{M}}_t) 
             + t \tilde{\boldsymbol{u}} (\tilde{\boldsymbol{v}}^\top \mathsf{m}  \boldsymbol{\alpha})
        =  \tilde{\mathsf{\Pi}} \boldsymbol{\mathcal{M}}_t
            +t \tilde{\mathsf{\Pi}}\mathsf{m}  \boldsymbol{\alpha}
       =   \boldsymbol{\mathcal{Z}}_t, \qquad t\in\R_+.
    \]

Further, by Itô's formula, we can show that $(\mathcal{Y}_t)_{t\in\R_+}$ is the pathwise unique strong solution of the SDE (\ref{eq-SDE-main-thm-1-dim}) with initial value 0, thus \eqref{eq-convergence-main-thm} follows and the proof concludes. Indeed,
  the SDE \eqref{eq-SDE-M} can also be written in the form
    \begin{equation}\label{eq-SDE-M-rewritten}
        \dif \boldsymbol{\mathcal{M}}_t 
        = \sqrt{(\tilde{\boldsymbol{v}}^\top(\boldsymbol{\mathcal{M}}_t+t\mathsf{m}\boldsymbol{\alpha}))^+ ((\mathsf{\Lambda}\tilde{\boldsymbol{u}}) \odot \boldsymbol{\mathsf{\Sigma}})} \dif \boldsymbol{\mathcal{W}}_t, \qquad t\in\R_+,
    \end{equation}
    since $\mathsf{\Lambda}\Tilde{\boldsymbol{u}}\in\R_+^{p}$ (see  Hypothesis \ref{hyp-primitive-relaxed}) implies that
    \begin{equation*}
    (\tilde{\boldsymbol{v}}^\top(\boldsymbol{\mathcal{M}}_t+t\mathsf{m}\boldsymbol{\alpha}))^+  \mathsf{\Lambda}\tilde{\boldsymbol{u}}
        =  ((\mathsf{\Lambda}\tilde{\boldsymbol{u}}) \tilde{\boldsymbol{v}}^
            \top(\boldsymbol{\mathcal{M}}_t+t\mathsf{m}\boldsymbol{\alpha}))^+  
        = ( \mathsf{\Lambda}\tilde{\mathsf{\Pi}} (\boldsymbol{\mathcal{M}}_t+t\mathsf{m}\boldsymbol{\alpha}))^+,\qquad t\in\R_+.
    \end{equation*}
Therefore,  $(\mathcal{Y}_t)_{t\in\R_+}$ is the pathwise unique strong 
 solution of the SDE
  \begin{equation}\label{SDE-1-dim-transform}
      \dif\mathcal{Y}_t = \tilde{\boldsymbol{v}}^\top \mathsf{m}\boldsymbol{\alpha}\dif t + \sqrt{\mathcal{Y}_t^+}\tilde{\boldsymbol{v}}^\top \sqrt{(\mathsf{\Lambda}\tilde{\boldsymbol{u}})\odot\boldsymbol{\mathsf{\Sigma}}}\dif \boldsymbol{\mathcal{W}}_t, \qquad t\in\R_+,
  \end{equation}
 with initial value $0$, since, 
  by \eqref{eq-definition-limit-process-main-thm-uni-dim}, $\dif \mathcal{Y}_t = \tilde{\boldsymbol{v}}^\top \mathsf{m}\boldsymbol{\alpha} \dif t + \tilde{\boldsymbol{v}}^\top \dif \boldsymbol{\mathcal{M}}_t$, $t\in\R_+$, and 
 $(\boldsymbol{\mathcal{M}}_t)_{t\in\R_+}$ is the pathwise unique strong solution of \eqref{eq-SDE-M-rewritten} with initial value $\boldsymbol{0}_p$.

Suppose that $\Tilde{\boldsymbol{v}}^\top((\mathsf{\Lambda}\tilde{\boldsymbol{u}})\odot\boldsymbol{\mathsf{\Sigma}})\Tilde{\boldsymbol{v}} \neq 0$, 
  then the process $(\mathcal{W}_t)_{t\in\R_+}$ given by
  \[
  \mathcal{W}_t \colonequals \left(\Tilde{\boldsymbol{v}}^\top((\mathsf{\Lambda}\tilde{\boldsymbol{u}})\odot\boldsymbol{\mathsf{\Sigma}})\Tilde{\boldsymbol{v}}\right)^{-1/2}\Tilde{\boldsymbol{v}}^\top\sqrt{(\mathsf{\Lambda}\tilde{\boldsymbol{u}})\odot\boldsymbol{\mathsf{\Sigma}}} \,\boldsymbol{\mathcal{W}}_t,
  \qquad t\in\R_+,
  \]
   is a well-defined one-dimensional standard Wiener process, since $\mathsf{\Lambda}\tilde{\boldsymbol{u}}\in\R_+^p$,  yielding that $(\mathsf{\Lambda}\tilde{\boldsymbol{u}})\odot\boldsymbol{\mathsf{\Sigma}}$ is a positive semi-definite matrix. In such a case, the  SDE
   \eqref{SDE-1-dim-transform} can be written as
    \begin{align}\label{SDE_calP}
        \dif\mathcal{Y}_t = \tilde{\boldsymbol{v}}^\top \mathsf{m}\boldsymbol{\alpha}\dif t + \sqrt{\mathcal{Y}_t^+} \sqrt{\Tilde{\boldsymbol{v}}^\top((\mathsf{\Lambda}\tilde{\boldsymbol{u}})\odot\boldsymbol{\mathsf{\Sigma}})\Tilde{\boldsymbol{v}}}\dif \mathcal{W}_t,
        \qquad t\in\R_+,
    \end{align}
    which coincides with the SDE \eqref{eq-SDE-main-thm-1-dim},  as desired. 
    Otherwise, it is trivial because if $\Tilde{\boldsymbol{v}}^\top((\mathsf{\Lambda}\tilde{\boldsymbol{u}})\odot\boldsymbol{\mathsf{\Sigma}})\Tilde{\boldsymbol{v}} = 0$, then $\|\Tilde{\boldsymbol{v}}^\top\sqrt{(\mathsf{\Lambda}\tilde{\boldsymbol{u}})\odot\boldsymbol{\mathsf{\Sigma}}}\| = 0$,
    yielding that SDEs \eqref{eq-SDE-main-thm-1-dim} and \eqref{SDE-1-dim-transform} correspond to the ODE $\dif x(t) = \tilde{\boldsymbol{v}} \mathsf{m}\boldsymbol{\alpha}\dif t$, $t\in\R_+$,  with initial value $0$,
     and this ODE has the identically zero solution.

    Finally, notice that the statement in Remark \ref{rem-limit-process-multi} can be derived using the equality $\boldsymbol{\mathcal{Z}} = \mathcal{Y} \tilde{\boldsymbol{u}}$, 
    \eqref{eq-conv-sum-seq},
     \eqref{eq-definition-limit-process-main-thm-multi-dim},
     \eqref{eq-definition-limit-process-main-thm-uni-dim}, 
     and \eqref{eq-SDE-M-rewritten}.
Namely, we have 
 \begin{align*}
 \dif\boldsymbol{\mathcal{Z}}_t 
  = \tilde{\mathsf{\Pi}} \mathsf{m}\boldsymbol{\alpha}\dif t 
    + \tilde{\mathsf{\Pi}} \dif \boldsymbol{\mathcal{M}}_t
  = \tilde{\mathsf{\Pi}} \mathsf{m}\boldsymbol{\alpha}\dif t 
    + \tilde{\mathsf{\Pi}}  
     \sqrt{ \mathcal{Y}_t^+ ( (\mathsf{\Lambda}\tilde{\boldsymbol{u}})\odot\boldsymbol{\mathsf{\Sigma}} ) }
     \dif \boldsymbol{\mathcal{W}}_t, \qquad t\in\R_+.
 \end{align*}
Since $\mathsf{\Lambda}\tilde{\boldsymbol{u}}\in\R_+^p$, we have 
 \[
   \mathcal{Y}_t^+ \mathsf{\Lambda}\tilde{\boldsymbol{u}}
   = (\mathcal{Y}_t\mathsf{\Lambda}\tilde{\boldsymbol{u}})^+
   = (\mathsf{\Lambda}\mathcal{Y}_t\tilde{\boldsymbol{u}})^+ 
   = (\mathsf{\Lambda}\boldsymbol{\mathcal{Z}}_t)^+, \qquad t\in\R_+,
  \]
  yielding \eqref{SDE_limit_new}, as desired.
    
\subsection*{Proof of Step 1}

 We need to show that (\ref{eq-conv-M}) holds. To this end, we apply  Theorem \ref{thm-ispany-pap} with $\boldsymbol{b}(t,\boldsymbol{x})= \boldsymbol{0}_p\in\R^p$, $\mathsf{C} (t,\boldsymbol{x}) = \sqrt{(\mathsf{\Lambda\tilde{\Pi}}(\boldsymbol{x}+t\mathsf{m}\boldsymbol{\alpha}))^+ \odot \boldsymbol{\mathsf{\Sigma}}}\in\R^{p\times p}$, $(t,\boldsymbol{x})\in\R_+\times\R^p$,   $\boldsymbol{\eta}$ is the Dirac measure concentrated at $\boldsymbol{0}_p$, $ \boldsymbol{\mathcal{U}} = \boldsymbol{\mathcal{M}}$, $\mathcal{F}_k^{(n)} = \mathcal{F}_{k}$, $ \boldsymbol{U}_{0}^{(n)} = n^{-1} \boldsymbol{Z}_{0}$ and $ \boldsymbol{U}_{k}^{(n)} 
 = n^{-1} \boldsymbol{M}_{k}$, $k,n\in\N$.
    
First, we check that the SDE (\ref{eq-SDE-M}) has a pathwise unique strong solution for all $\R^p$--valued initial values.
As seen in Step 4, we can rewrite the SDE \eqref{eq-SDE-M} in the form \eqref{eq-SDE-M-rewritten}.

If $(\boldsymbol{\mathcal{M}}_t^{(\boldsymbol{y}_0)})_{t\in\R_+}$ is a strong solution of (\ref{eq-SDE-M-rewritten}) with initial value $\boldsymbol{\mathcal{M}}_0^{(\boldsymbol{y}_0)} = \boldsymbol{y}_0 \in\R^p$, then we check that the process $((\mathcal{P}_t^{(\boldsymbol{y}_0)},\boldsymbol{\mathcal{Q}}_t^{(\boldsymbol{y}_0)}))_{t\in\R_+}$, defined by
    \begin{equation*}
        \mathcal{P}_t^{(\boldsymbol{y}_0)} \colonequals \tilde{\boldsymbol{v}}^\top(\boldsymbol{\mathcal{M}}_t^{(\boldsymbol{y}_0)}+t\mathsf{m}\boldsymbol{\alpha}), \qquad \boldsymbol{\mathcal{Q}}_t^{(\boldsymbol{y}_0)} \colonequals \boldsymbol{\mathcal{M}}_t^{(\boldsymbol{y}_0)} - \mathcal{P}_t^{(\boldsymbol{y}_0)}\tilde{\boldsymbol{u}}, \qquad t\in\R_+,
    \end{equation*}
    is a strong solution of the SDE
    \begin{equation}\label{eq-system-DE}
        \begin{cases}
            \dif\mathcal{P}_t = \tilde{\boldsymbol{v}}^\top \mathsf{m}\boldsymbol{\alpha}\dif t + \sqrt{\mathcal{P}_t^+}\tilde{\boldsymbol{v}}^\top \sqrt{(\mathsf{\Lambda}\tilde{\boldsymbol{u}})\odot\boldsymbol{\mathsf{\Sigma}}}\dif \boldsymbol{\mathcal{W}}_t,\\
            \dif \boldsymbol{\mathcal{Q}}_t = -\tilde{\mathsf{\Pi}}\mathsf{m}\boldsymbol{\alpha}\dif t + \sqrt{\mathcal{P}_t^+}(\mathsf{I}_p - \tilde{\mathsf{\Pi}}) \sqrt{(\mathsf{\Lambda}\tilde{\boldsymbol{u}})\odot\boldsymbol{\mathsf{\Sigma}}}\dif \boldsymbol{\mathcal{W}}_t,
        \end{cases}
        \qquad t\in\R_+,
    \end{equation}
    with initial value 
    \[
    (\mathcal{P}_0^{(\boldsymbol{y}_0)},\boldsymbol{\mathcal{Q}}_0^{(\boldsymbol{y}_0)}) 
    = (\tilde{\boldsymbol{v}}^\top\boldsymbol{y}_0, 
         \boldsymbol{y}_0 - \tilde{\boldsymbol{v}}^\top\boldsymbol{y}_0 \tilde{\boldsymbol{u}})
    = (\tilde{\boldsymbol{v}}^\top\boldsymbol{y}_0, 
         \boldsymbol{y}_0 
        - \tilde{\boldsymbol{u}}\tilde{\boldsymbol{v}}^\top\boldsymbol{y}_0 )    
    = (\tilde{\boldsymbol{v}}^\top\boldsymbol{y}_0,(\mathsf{I}_p - \tilde{\mathsf{\Pi}})\boldsymbol{y}_0).
    \]
Notice that the first SDE in \eqref{eq-system-DE} readily follows from \eqref{eq-SDE-M-rewritten}, and the second one can be checked as follows
    \begin{align*}
        \dif \boldsymbol{\mathcal{Q}}_t^{(\boldsymbol{y}_0)} &= \dif \boldsymbol{\mathcal{M}}_t^{(\boldsymbol{y}_0)} - \tilde{\boldsymbol{u}}\dif \mathcal{P}_t^{(\boldsymbol{y}_0)} = -\tilde{\boldsymbol{u}} \tilde{\boldsymbol{v}}^\top \mathsf{m}\boldsymbol{\alpha} \dif t + (\mathsf{I}_p - \tilde{\boldsymbol{u}} \tilde{\boldsymbol{v}}^\top) \dif \boldsymbol{\mathcal{M}}_t^{(\boldsymbol{y}_0)} \\
        &= -\tilde{\mathsf{\Pi}}\mathsf{m}\boldsymbol{\alpha}\dif t + (\mathsf{I}_p - \tilde{\mathsf{\Pi}}) \sqrt{(\mathcal{P}_t^{(\boldsymbol{y}_0)})^+((\mathsf{\Lambda}\tilde{\boldsymbol{u}})\odot\boldsymbol{\mathsf{\Sigma}})}\dif \boldsymbol{\mathcal{W}}_t , \qquad  t\in\R_+.
    \end{align*} 
    
   Conversely, if $((\mathcal{P}_t^{(p_0,\boldsymbol{q}_0)},\boldsymbol{\mathcal{Q}}_t^{(p_0,\boldsymbol{q}_0)}))_{t\in\R_+}$ is  a strong solution of  the SDE (\ref{eq-system-DE}) with initial value $(\mathcal{P}_0^{(p_0,\boldsymbol{q}_0)},\boldsymbol{\mathcal{Q}}_0^{(p_0,\boldsymbol{q}_0)}) = (p_0, \boldsymbol{q}_0)\in \R\times\R^p$,
     again by Itô's formula,
     the process
    \begin{equation*}
        \boldsymbol{\mathcal{M}}_t^{(p_0,\boldsymbol{q}_0)}
        := \mathcal{P}_t^{(p_0,\boldsymbol{q}_0)}\tilde{\boldsymbol{u}} + \boldsymbol{\mathcal{Q}}_t^{(p_0,\boldsymbol{q}_0)}, \qquad t\in\R_+,
    \end{equation*}
    is a strong solution of (\ref{eq-SDE-M-rewritten}) with initial value $\boldsymbol{\mathcal{M}}_0^{(p_0,\boldsymbol{q}_0)}=p_0 \Tilde{\boldsymbol{u}}+\boldsymbol{q}_0$.

Now, let us see that the map $\R^p \ni \boldsymbol{y}  \mapsto (\tilde{\boldsymbol{v}}^\top\boldsymbol{y},(\mathsf{I}_p - \tilde{\mathsf{\Pi}})\boldsymbol{y} )$ is a 
 bijection between $\R^p$ and 
 $\R\times\operatorname{Null} (\tilde{\boldsymbol{v}}^\top)$.
 Let $\boldsymbol{x}_0, \boldsymbol{y}_0 \in\R^p$ be two vectors with the same image under the map  in question. 
Then the injectivity (i.e. $\boldsymbol{x}_0 = \boldsymbol{y}_0$) follows from
 \begin{equation*}
 \begin{cases}
     \Tilde{\boldsymbol{v}}^\top\boldsymbol{x}_0 = \Tilde{\boldsymbol{v}}^\top\boldsymbol{y}_0 \\
     (\mathsf{I}_p - \Tilde{\mathsf{\Pi}}) \boldsymbol{x}_0 = (\mathsf{I}_p - \Tilde{\mathsf{\Pi}}) \boldsymbol{y}_0,
 \end{cases} \quad \text{ yielding that } \quad 
 \begin{cases}
     \Tilde{\boldsymbol{u}}(\Tilde{\boldsymbol{v}}^\top\boldsymbol{x}_0) = \Tilde{\boldsymbol{u}}(\Tilde{\boldsymbol{v}}^\top\boldsymbol{y}_0) \\
     \boldsymbol{x}_0 - \Tilde{\boldsymbol{u}}(\Tilde{\boldsymbol{v}}^\top\boldsymbol{x}_0) = \boldsymbol{y}_0 - \Tilde{\boldsymbol{u}}(\Tilde{\boldsymbol{v}}^\top\boldsymbol{y}_0),
 \end{cases}
 \end{equation*}
 where we have used  Hypothesis \ref{hyp-primitive-relaxed}.
Further, for all $ (p_0, \boldsymbol{q}_0)\in \R\times\operatorname{Null} (\tilde{\boldsymbol{v}}^\top)$, the vector $\boldsymbol{y}_0 \colonequals p_0 \Tilde{\boldsymbol{u}} + \boldsymbol{q}_0$ is an element of  the preimage  of $(p_0, \boldsymbol{q}_0)$,
 since, using that $\Tilde{\boldsymbol{v}}^\top \Tilde{\boldsymbol{u}}=1$,
 we have 
 ${\boldsymbol{v}}^\top(p_0 \Tilde{\boldsymbol{u}} + \boldsymbol{q}_0)= p_0+0=p_0$ and
 \[
   (\mathsf{I}_p - \Tilde{\mathsf{\Pi}})
      (p_0 \Tilde{\boldsymbol{u}} + \boldsymbol{q}_0)
       = p_0 \Tilde{\boldsymbol{u}} + \boldsymbol{q}_0
         - p_0 \Tilde{\boldsymbol{u}} (\Tilde{\boldsymbol{v}}^\top   
                  \Tilde{\boldsymbol{u}}) 
         - \Tilde{\boldsymbol{u}}\Tilde{\boldsymbol{v}}^\top \boldsymbol{q}_0
        = \boldsymbol{q}_0 -  \Tilde{\boldsymbol{u}} \cdot 0
        =\boldsymbol{q}_0 .
 \]
Hence the map in question is surjective.

All in all, it is enough to prove that SDE \eqref{eq-system-DE} has a pathwise unique strong solution for all initial values in $\R\times\R^p$, in particular for all initial values in $\R\times\operatorname{Null} (\tilde{\boldsymbol{v}}^\top)$, which is shown below.
 The pathwise uniqueness for the first SDE in \eqref{eq-system-DE}  (see also the SDE \eqref{SDE-1-dim-transform}) is clear by the discussion in Step 4 and Remark \ref{rem-unique}.
We now proceed with the second SDE in \eqref{eq-system-DE}. 
One can easily get that its pathwise unique strong solution with initial value $\boldsymbol{Q}_t^{(p_0,\boldsymbol{q}_0)} = \boldsymbol{q}_0$ 
takes the form
    \begin{equation*}
        \boldsymbol{Q}_t^{(p_0,\boldsymbol{q}_0)} = \boldsymbol{q}_0 -\tilde{\mathsf{\Pi}}\mathsf{m}\boldsymbol{\alpha} t + (\mathsf{I}_p - \tilde{\mathsf{\Pi}}) \sqrt{(\mathsf{\Lambda}\tilde{\boldsymbol{u}})\odot\boldsymbol{\mathsf{\Sigma}}}\int_0^t \sqrt{(\mathcal{P}_s^{(p_0)})^+} \dif \boldsymbol{\mathcal{W}}_s,
        \qquad t\in\R_+,
    \end{equation*}
    and not only for all $(p_0,\boldsymbol{q}_0)\in\R\times\operatorname{Null}(\tilde{\boldsymbol{v}}^\top)$ but also for all $(p_0,\boldsymbol{q}_0)\in\R\times\R^p$.    

\smallskip

In what follows, we check that the assumptions of Theorem \ref{thm-ispany-pap} hold with our previous choices.

\subsubsection*{Step 1/A}

For each $n,k\in\N$,  we get that 
 $\ev{\|n^{-1}\boldsymbol{M}_k\|^2}<\infty$, 
  since
    \begin{align*}
        \ev{\|\boldsymbol{M}_k\|^2}
         & =  \ev{\tr{\boldsymbol{M}_{k}(\boldsymbol{M}_{k})^\top}}
         = \tr{\var{\boldsymbol{M}_{k}}} = \tr{ \var{\evcond{\boldsymbol{M}_{k}}{\mathcal{F}_{k-1}}} +
               \ev{\varcond{\boldsymbol{M}_{k}}{\mathcal{F}_{k-1}}}} \\
         &=  \tr{ \ev{\varcond{\boldsymbol{M}_{k}}{\mathcal{F}_{k-1}}}} =  \tr{ \ev{\varcond{\boldsymbol{Z}_{k}}{\mathcal{F}_{k-1}}}}
            < \infty,
    \end{align*}
    where we used the variance decomposition formula and, at the last step, equation \eqref{eq-conditional-variance}.
    Further, $n^{-1}\bZ_0\stackrel{\mathcal{L}}{\longrightarrow} \boldsymbol{0}_p$
    as $n\to\infty$.

\subsubsection*{Step 1/B}
    
For all $T>0,$ the condition (i) of Theorem 
      \ref{thm-ispany-pap}  is trivially satisfied, since
      $(\boldsymbol{M}_{k})_{k\in\N}$ is a sequence of martingale differences 
      yielding that $\evcond{\boldsymbol{M}_{k}}{\mathcal{F}_{k-1}}=  \boldsymbol{0}_p$, $k\in\N$.
      
      The conditions (ii) and (iii) {of Theorem 
      \ref{thm-ispany-pap}} can be written in the following forms
    \begin{equation}\label{eq-our-cond-ii}
        \sup_{t\in [0,T]} \left\| \frac{1}{n^2} \sum_{k=1}^{\lfloor nt \rfloor} \evcond{\boldsymbol{M}_{k}(\boldsymbol{M}_{k})^\top}{\mathcal{F}_{k-1}} - \int_0^t (\boldsymbol{\mathcal{R}}_s^{(n)})^+ \dif s \odot \boldsymbol{\mathsf{\Sigma}} \right\| \stackrel{\mathrm{P}}{\longrightarrow} 0 \qquad \text{as } n\to\infty,
    \end{equation}
    \begin{equation}\label{lindeberg-condition}
        \frac{1}{n^2} \sum_{k=1}^{\lfloor nT \rfloor} \evcond{\|\boldsymbol{M}_k\|^2 \mathds{1}_{\{\|\boldsymbol{M}_k\|>n\theta\}}}{\mathcal{F}_{k-1}} \stackrel{\mathrm{P}}{\longrightarrow} 0 
        \qquad \text{as } n\to\infty \text{ for all} \;\theta >0,
    \end{equation}
    where the process $(\boldsymbol{\mathcal{R}}_t^{(n)})_{t\in\R_+}$ is defined by
    \begin{equation*}
        \boldsymbol{\mathcal{R}}_t^{(n)} \colonequals \mathsf{\Lambda}\tilde{\mathsf{\Pi}}(\boldsymbol{\mathcal{M}}_t^{(n)}+t\mathsf{m}\boldsymbol{\alpha}), \qquad t\in\R_+,\quad n\in\N.
    \end{equation*}

\subsubsection*{Step 1/C}
    
We show (\ref{eq-our-cond-ii}). Taking into account $\evcond{\boldsymbol{M}_{k}(\boldsymbol{M}_{k})^\top}{\mathcal{F}_{k-1}} = \varcond{\boldsymbol{Z}_{k}}{\mathcal{F}_{k-1}}$, $k\in\N$, equation (\ref{eq-conditional-variance}) and {Hypothesis \ref{hyp-g}}, we can obtain
    \begin{align}\label{eq-first-term-ii}
    \begin{split}
        \frac{1}{n^2} \sum_{k=1}^{\lfloor nt \rfloor} \evcond{\boldsymbol{M}_{k}(\boldsymbol{M}_{k})^\top}{\mathcal{F}_{k-1}} &=  \frac{1}{n^2} \Bigg( \lfloor nt \rfloor \boldsymbol{\alpha} + \sum_{k=1}^{\lfloor nt \rfloor} \left(\mathsf{\Lambda}\boldsymbol{Z}_{k-1} +  \boldsymbol{g}(\boldsymbol{Z}_{k-1})  \right) \Bigg) \odot\boldsymbol{\mathsf{\Sigma}}  \\
        &\quad + \mathsf{m} \Bigg( \frac{1}{n^2} \sum_{k=1}^{\lfloor nt \rfloor} \mathsf{\Gamma}(\boldsymbol{Z}_{k-1}) \Bigg) \mathsf{m}^{\top},
        \qquad t\in\R_+.
    \end{split}
    \end{align} 
    Using again Hypothesis \ref{hyp-g} together with equation (\ref{eq-conditional-expected-value}), the martingale differences in (\ref{eq-def-mtg-dif}) can be written as
    \begin{equation}\label{eq-explicit-mtg-diff}
        \boldsymbol{M}_{k} = \boldsymbol{Z}_{k} - \tilde{\mathsf{m}}\boldsymbol{Z}_{k-1} - \mathsf{m}\boldsymbol{\alpha} - \mathsf{m}\boldsymbol{g}(\boldsymbol{Z}_{k-1}),
        \qquad k\in\N.
    \end{equation}
With this expression and equation (\ref{eq-def-step-from-mtg}), we have
    \begin{align*}
        \boldsymbol{\mathcal{R}}_t^{(n)} &= \mathsf{\Lambda}\tilde{\mathsf{\Pi}} \left( \frac{1}{n} \Bigg( \boldsymbol{Z}_{0} + \sum_{k=1}^{\lfloor nt\rfloor} \Big( \boldsymbol{Z}_{k} - \tilde{\mathsf{m}}\boldsymbol{Z}_{k-1} - \mathsf{m}\boldsymbol{\alpha} - \mathsf{m}\boldsymbol{g}(\boldsymbol{Z}_{k-1})\Big) \Bigg) + t\mathsf{m}\boldsymbol{\alpha}\right) \nonumber \\
        &= \frac{1}{n} \mathsf{\Lambda}\tilde{\mathsf{\Pi}} \boldsymbol{Z}_{\lfloor nt\rfloor} + \frac{nt - \lfloor nt\rfloor}{n} \mathsf{\Lambda}\tilde{\mathsf{\Pi}}\mathsf{m}\boldsymbol{\alpha} - \frac{1}{n} \sum_{k=1}^{\lfloor nt\rfloor} \mathsf{\Lambda}\tilde{\mathsf{\Pi}}\mathsf{m} \boldsymbol{g}(\boldsymbol{Z}_{k-1}),
        \qquad t\in\R_+,\quad n\in\N,
    \end{align*}
    because $\tilde{\mathsf{\Pi}}\tilde{\mathsf{m}} 
    = (\lim_{l\to\infty} \tilde{\mathsf{m}}^l) \tilde{\mathsf{m}} 
      = \lim_{l\to\infty} \tilde{\mathsf{m}}^{l+1} = \tilde{\mathsf{\Pi}}$ by Hypothesis \ref{hyp-primitive-relaxed}.

  Since $\mathsf{\Lambda} \Tilde{\boldsymbol{u}}\in\R_+^p$, we have  
  $\mathsf{\Lambda}\tilde{\mathsf{\Pi}} = (\mathsf{\Lambda}\Tilde{\boldsymbol{u}})\Tilde{\boldsymbol{v}}^\top \in\R_+^{p\times p}$ and, 
 consequently, we get that
    \[
     {\boldsymbol{\mathcal{B}}_{- , t}^{(n)}
             \preceq  (\boldsymbol{\mathcal{R}}_t^{(n)})^+ 
             \preceq \boldsymbol{\mathcal{B}}_{+ , t}^{(n)}, 
             \qquad n\in\N, \quad t\in\R_+,}
    \]
   where the bounds $\boldsymbol{\mathcal{B}}_{- , t}^{(n)}$ and $\boldsymbol{\mathcal{B}}_{+ , t}^{(n)}$ are defined by 
    \begin{equation*}
        \boldsymbol{\mathcal{B}}_{\pm , t}^{(n)}  \colonequals  \frac{1}{n} \mathsf{\Lambda}\tilde{\mathsf{\Pi}} \boldsymbol{Z}_{\lfloor nt\rfloor}  \pm \frac{nt - \lfloor nt\rfloor}{n} |\mathsf{\Lambda}\tilde{\mathsf{\Pi}}\mathsf{m}\boldsymbol{\alpha}|  \pm \frac{1}{n} \sum_{k=1}^{\lfloor nt\rfloor} |\mathsf{\Lambda}\tilde{\mathsf{\Pi}}\mathsf{m} \boldsymbol{g}(\boldsymbol{Z}_{k-1})|,
    \end{equation*}
    and recall that $\vert \bz\vert = (\vert z_1\vert, \ldots, \vert z_p\vert)^\top\in\R_+^p$ for any $\bz=(z_1,\ldots,z_p)^\top\in\R^p$.
    Hence we get
    \begin{equation}\label{eq-bounds-of-integral}
        \int_0^t \boldsymbol{\mathcal{B}}_{- , s}^{(n)}\dif s 
        \preceq \int_0^t (\boldsymbol{\mathcal{R}}_s^{(n)})^+ \dif s 
        \preceq \int_0^t \boldsymbol{\mathcal{B}}_{+ , s}^{(n)} \dif s,
        \qquad t\in\R_+,\;\; n\in\N.
    \end{equation}
    
    Next, we check that, for all $t\in\R_+$,
    \begin{align}\label{eq-integration-of-bounds}
        \begin{split}
        \int_0^t \boldsymbol{\mathcal{B}}_{\pm , s}^{(n)} \dif s&=  \frac{1}{n^2} \sum_{j=0}^{\lfloor nt\rfloor -1} \mathsf{\Lambda}\tilde{\mathsf{\Pi}} \boldsymbol{Z}_{j} + \frac{nt - \lfloor nt \rfloor}{n^2} \mathsf{\Lambda}\tilde{\mathsf{\Pi}} \boldsymbol{Z}_{\lfloor nt\rfloor} 
          \pm \frac{\lfloor nt\rfloor + (nt - \lfloor nt\rfloor)^2}{2n^2} |\mathsf{\Lambda}\tilde{\mathsf{\Pi}}\mathsf{m}\boldsymbol{\alpha}| \\
        &\quad \pm \frac{1}{n^2} \sum_{j=0}^{\lfloor nt\rfloor -1} \sum_{k=1}^{j} |\mathsf{\Lambda}\tilde{\mathsf{\Pi}}\mathsf{m} \boldsymbol{g}(\boldsymbol{Z}_{k-1})| 
         \pm \frac{nt - \lfloor nt\rfloor}{n^2} \sum_{k=1}^{\lfloor nt\rfloor} |\mathsf{\Lambda}\tilde{\mathsf{\Pi}}\mathsf{m} \boldsymbol{g}(\boldsymbol{Z}_{k-1})| .
      \end{split}  
    \end{align}
  To verify \eqref{eq-integration-of-bounds}, we split the integral as follows
    \begin{equation*}
        \int_0^t \boldsymbol{\mathcal{B}}_{\pm , s}^{(n)} \dif s = \sum_{j=0}^{\lfloor nt\rfloor -1} \int_{\frac{j}{n}}^{\frac{j+1}{n}} \boldsymbol{\mathcal{B}}_{\pm , s}^{(n)} \dif s + \int_{\frac{\lfloor nt\rfloor}{n}}^{t} \boldsymbol{\mathcal{B}}_{\pm , s}^{(n)} \dif s,
    \end{equation*}
    and we take into account
    \begin{equation*}
        \int_{\frac{j}{n}}^{\frac{j+1}{n}} \boldsymbol{\mathcal{B}}_{\pm , s}^{(n)} \dif s = \frac{1}{n^2} \mathsf{\Lambda}\tilde{\mathsf{\Pi}} \boldsymbol{Z}_{j} \pm \frac{1}{2n^2} |\mathsf{\Lambda}\tilde{\mathsf{\Pi}}\mathsf{m}\boldsymbol{\alpha}| \pm \frac{1}{n^2} \sum_{k=1}^{j} |\mathsf{\Lambda}\tilde{\mathsf{\Pi}}\mathsf{m} \boldsymbol{g}(\boldsymbol{Z}_{k-1})|
    \end{equation*}
    {for $j\in\{0,\ldots,\lfloor nt\rfloor-1\}$,} and 
    \begin{equation*}
        \int_{\frac{\lfloor nt\rfloor}{n}}^{t} \boldsymbol{\mathcal{B}}_{\pm , s}^{(n)} \dif s = \frac{nt - \lfloor nt \rfloor}{n^2} \mathsf{\Lambda}\tilde{\mathsf{\Pi}} \boldsymbol{Z}_{\lfloor nt\rfloor} \pm \frac{(nt - \lfloor nt\rfloor)^2}{2n^2} |\mathsf{\Lambda}\tilde{\mathsf{\Pi}}\mathsf{m}\boldsymbol{\alpha}| \pm \frac{nt - \lfloor nt\rfloor}{n^2} \sum_{k=1}^{\lfloor nt\rfloor} |\mathsf{\Lambda}\tilde{\mathsf{\Pi}}\mathsf{m} \boldsymbol{g}(\boldsymbol{Z}_{k-1})|,
    \end{equation*}
    {where we used that 
    \[
        \int_{\frac{j}{n}}^{\frac{j+1}{n}} \frac{ns-j}{n}\dif s = \frac{1}{2n^2},
                     \qquad j\in\{0,\ldots,\lfloor nt\rfloor-1\},
    \]
    and
    \[
        \int_{\frac{\lfloor nt\rfloor}{n}}^t \frac{ns-\lfloor nt\rfloor}{n}\dif s
                 = \frac{(nt - \lfloor nt\rfloor)^2}{2n^2}.
    \]
    }

In what follows, we will use that for all $\mathsf{A}\in\R^{p\times p}$ and 
 $\boldsymbol{x},\boldsymbol{y}\in\R^p$, $\boldsymbol{y}_0\in\R_+^p$ with 
 $\boldsymbol{y} - \boldsymbol{y}_0 \preceq  \boldsymbol{x} \preceq  \boldsymbol{y} + \boldsymbol{y}_0$,
 we have 
 \begin{align}\label{help_A_Sigma}
    \Vert \mathsf{A} - \boldsymbol{x} \odot \boldsymbol{\mathsf{\Sigma}}\Vert
        \leq \Vert \mathsf{A} - \boldsymbol{y} \odot \boldsymbol{\mathsf{\Sigma}}\Vert
             + \Vert \boldsymbol{y_0} \Vert \Vert \boldsymbol{\mathsf{\Sigma}} \Vert,
 \end{align}
 where we recall $\|\boldsymbol{\mathsf{\Sigma}}\|  =  \sum_{i=1}^p \|\mathsf{\Sigma}_i\|$.
Indeed, using that $\Vert \boldsymbol{z}\odot \boldsymbol{\mathsf{\Sigma}}\Vert \leq \Vert \boldsymbol{z} \Vert \Vert \boldsymbol{\mathsf{\Sigma}} \Vert$ for all $\boldsymbol{z}\in\R^p$ (see \eqref{help_z_circ_norm}), we get 
  \begin{align*}
   \Vert \mathsf{A} - \boldsymbol{x} \odot \boldsymbol{\mathsf{\Sigma}}\Vert
     &\leq \Vert \mathsf{A} - \boldsymbol{y} \odot \boldsymbol{\mathsf{\Sigma}}\Vert 
           + \Vert (\boldsymbol{y} - \boldsymbol{x}) \odot \boldsymbol{\mathsf{\Sigma}}\Vert 
      = \Vert \mathsf{A} - \boldsymbol{y} \odot \boldsymbol{\mathsf{\Sigma}}\Vert 
           + \left\Vert \sum_{i=1}^p (y_i-x_i)\mathsf{\mathsf{\Sigma}}_i\right\Vert \\     
     &\leq \Vert \mathsf{A} - \boldsymbol{y} \odot \boldsymbol{\mathsf{\Sigma}}\Vert
           + \sum_{i=1}^p  \vert y_i-x_i\vert  \Vert \mathsf{\mathsf{\Sigma}}_i\Vert  
    \leq \Vert \mathsf{A} - \boldsymbol{y} \odot \boldsymbol{\mathsf{\Sigma}}\Vert
           + \Vert \boldsymbol{y}-\boldsymbol{x}\Vert \sum_{i=1}^p  \Vert \mathsf{\mathsf{\Sigma}}_i\Vert\\
     &\leq \Vert \mathsf{A} - \boldsymbol{y} \odot \boldsymbol{\mathsf{\Sigma}}\Vert
            + \Vert \boldsymbol{y}_0 \Vert  \Vert \boldsymbol{\mathsf{\Sigma}}\Vert,
  \end{align*}
  as desired.
Hence, from equations (\ref{eq-first-term-ii}), (\ref{eq-bounds-of-integral}), 
  (\ref{eq-integration-of-bounds}) and \eqref{help_A_Sigma}, we obtain  
    \begin{align}\label{help_Step1C}
     \begin{split}
        & \left\| \frac{1}{n^2} \sum_{k=1}^{\lfloor nt \rfloor} \evcond{\boldsymbol{M}_{k}(\boldsymbol{M}_{k})^\top}{\mathcal{F}_{k-1}} - \int_0^t (\boldsymbol{\mathcal{R}}_s^{(n)})^+ \dif s \odot \boldsymbol{\mathsf{\Sigma}} \right\| \\
        & \quad \leq \left\| \frac{1}{n^2} \sum_{k=0}^{\lfloor nt \rfloor -1} \mathsf{\Lambda}(\mathsf{I}_p-\tilde{\mathsf{\Pi}})\boldsymbol{Z}_{k} \odot \boldsymbol{\mathsf{\Sigma}} \right\| + \left\| \frac{nt-\lfloor nt \rfloor}{n^2} \mathsf{\Lambda}\tilde{\mathsf{\Pi}}\boldsymbol{Z}_{\lfloor nt \rfloor} \odot \boldsymbol{\mathsf{\Sigma}} \right\| \\
        &\qquad + \left\| \frac{\lfloor nt\rfloor }{n^2} \boldsymbol{\alpha} \odot \boldsymbol{\mathsf{\Sigma}}\right\| + \left\| \frac{\lfloor nt\rfloor + (nt - \lfloor nt\rfloor)^2}{2n^2} |\mathsf{\Lambda}\tilde{\mathsf{\Pi}}\mathsf{m}\boldsymbol{\alpha}| \right\| \|\boldsymbol{\mathsf{\Sigma}}\| \\
        &\qquad + \left\|\frac{1}{n^2} \sum_{k=1}^{\lfloor nt\rfloor} \boldsymbol{g}(\boldsymbol{Z}_{k-1}) \odot \boldsymbol{\mathsf{\Sigma}} \right\| + 
        \left\| \mathsf{m} \Bigg( \frac{1}{n^2} \sum_{k=1}^{\lfloor nt \rfloor} \mathsf{\Gamma}(\boldsymbol{Z}_{k-1}) \Bigg) \mathsf{m}^{\top} \right\| \\
        &\qquad +  \left\| \frac{1}{n^2} \sum_{j=0}^{\lfloor nt\rfloor -1} \sum_{k=1}^{j} |\mathsf{\Lambda}\tilde{\mathsf{\Pi}}\mathsf{m} \boldsymbol{g}(\boldsymbol{Z}_{k-1})|  \right\| \Vert \boldsymbol{\mathsf{\Sigma}}  \Vert + 
        \left\| \frac{nt - \lfloor nt\rfloor}{n^2} \sum_{k=1}^{\lfloor nt\rfloor} |\mathsf{\Lambda}\tilde{\mathsf{\Pi}}\mathsf{m} \boldsymbol{g}(\boldsymbol{Z}_{k-1})| \right\|
        \Vert \boldsymbol{\mathsf{\Sigma}} \Vert.
       \end{split} 
    \end{align}
    
    As a consequence of Hypothesis \ref{hyp-g}, the function $\boldsymbol{g}$ 
    is bounded, and
    $\Vert | \mathsf{A}\bz | \Vert = \Vert  \mathsf{A}\bz \Vert 
      \leq \Vert \mathsf{A} \Vert \Vert \bz\Vert$ for all $\mathsf{A}\in\R^{p\times p}$ and $\bz\in\R^p$. Thus,
    in order to show (\ref{eq-our-cond-ii}), we can easily see that it is enough to prove that 
    \begin{align}
        \sup_{t\in [0,T]} \frac{1}{n^2} \sum_{k=0}^{\lfloor nt \rfloor -1} \|(\mathsf{I}_p-\tilde{\mathsf{\Pi}})\boldsymbol{Z}_{k}\| &\stackrel{\mathrm{P}}{\longrightarrow} 0, \label{eq-cond-aux-1}\\
        \sup_{t\in [0,T]} \frac{1}{n^2} \|\boldsymbol{Z}_{\lfloor nt \rfloor}\| &\stackrel{\mathrm{P}}{\longrightarrow} 0, \label{eq-cond-aux-2}\\
        \sup_{t\in [0,T]} \frac{1}{n^2} \sum_{k=1}^{\lfloor nt \rfloor} \| \mathsf{\Gamma}(\boldsymbol{Z}_{k-1})\| &\stackrel{\mathrm{P}}{\longrightarrow} 0 \label{eq-cond-aux-gamma},\\
        \sup_{t\in [0,T]} \frac{1}{n^2} \sum_{j=0}^{\lfloor nt\rfloor -1} \sum_{k=1}^{j} \|\boldsymbol{g}(\boldsymbol{Z}_{k-1})\|  &\stackrel{\mathrm{P}}{\longrightarrow} 0, \label{eq-cond-aux-g}
    \end{align}
    as $n\to \infty$ for all $T>0$.
     Indeed, for all $T>0$, the supremum on $[0,T]$ of the fifth and eight terms on the right hand side of \eqref{help_Step1C} tend to $0$ as $n\to\infty$ in probability, since
 \begin{align*}
 \sup_{t\in [0,T]}
  \left\| \frac{nt - \lfloor nt\rfloor}{n^2} \sum_{k=1}^{\lfloor nt\rfloor} |\boldsymbol{g}(\boldsymbol{Z}_{k-1})| \right\|
  \leq \sup_{t\in [0,T]} 
        \left\| \frac{1}{n^2} \sum_{k=1}^{\lfloor nt\rfloor}  |\boldsymbol{g}(\boldsymbol{Z}_{k-1})| \right\|
  \leq \frac{\lfloor nT\rfloor}{n^2}
        \sup_{\bz\in\Z_+^p} \Vert \boldsymbol{g}(\bz) \Vert
  =\OO\left(\frac{1}{n}\right)
 \end{align*}
 as  $n\to\infty$.
 We remark that, later, at the end of the proof of Step 3, it will turn out that 
 $n^{-1} \sum_{k=1}^{\lfloor nT\rfloor} \ev{\|\boldsymbol{g}
 (\boldsymbol{Z}_{k-1})\|} \to 0$ as $n\to\infty$ for all $T>0$ holds as well.
 
     Let us start proving (\ref{eq-cond-aux-1}) and (\ref{eq-cond-aux-2}).
    From equation (\ref{eq-explicit-mtg-diff}), the following expression can be obtained recursively,
    \begin{equation}\label{help_Z_n_k}
        \boldsymbol{Z}_{k} = \tilde{\mathsf{m}}^k \boldsymbol{Z}_{0} + \sum_{j=1}^k \tilde{\mathsf{m}}^{k-j} \left( \boldsymbol{M}_{j} + \mathsf{m}\boldsymbol{\alpha} + \mathsf{m}\boldsymbol{g}(\boldsymbol{Z}_{j-1})\right),
        \qquad k\in\N,
    \end{equation}
    and, using it with $\tilde{\mathsf{\Pi}}\tilde{\mathsf{m}}^k 
     = \tilde{\mathsf{\Pi}}$, ${k\in\Z_+}$, we have
    \begin{equation*}
        (\mathsf{I}_p-\tilde{\mathsf{\Pi}})\boldsymbol{Z}_{k} = (\tilde{\mathsf{m}}^k -\tilde{\mathsf{\Pi}})  \boldsymbol{Z}_{0} + \sum_{j=1}^k (\tilde{\mathsf{m}}^{k-j} -\tilde{\mathsf{\Pi}})  \left( \boldsymbol{M}_{j} + \mathsf{m}\boldsymbol{\alpha} + \mathsf{m}\boldsymbol{g}(\boldsymbol{Z}_{j-1})\right)
    \end{equation*}
    for $k\in\N$.

    Let $\tilde{c}\in\R_{++}$ and $\tilde{r}\in(0,1)$ be the constants given in part \ref{hyp-pi-tilde} of Hypothesis \ref{hyp-primitive-relaxed} such that $\|\tilde{\mathsf{m}}^k-\tilde{\mathsf{\Pi}}\|\leq \tilde{c}\tilde{r}^k$ for each $k\in\N$, and let us introduce the constants
    \begin{align}
        \tilde{C} & \colonequals  \sup_{j\in\Z_+} \|\tilde{\mathsf{m}}^j\| \leq \tilde{c} + \|\tilde{\mathsf{\Pi}}\|, \label{eq-C-tilde} \\
        C_{\boldsymbol{g}} & \colonequals  \sup_{\boldsymbol{z}\in\Z_+^p} \|\boldsymbol{g}(\boldsymbol{z})\| < \infty . \label{eq-C-g}
    \end{align}
    
    Then, using \eqref{help_Z_n_k}, the following inequalities hold
    \begin{align*}
        \|\boldsymbol{Z}_{\lfloor nt \rfloor}\| 
        &\leq \|\tilde{\mathsf{m}}^{\lfloor nt \rfloor} \| \|\boldsymbol{Z}_{0} \| + \sum_{j=1}^{\lfloor nt \rfloor} \|\tilde{\mathsf{m}}^{\lfloor nt \rfloor-j}\| \Big( \|\boldsymbol{M}_{j}\| + \|\mathsf{m}\|\big(\|\boldsymbol{\alpha}\| + \|\boldsymbol{g}(\boldsymbol{Z}_{j-1})\|\big)\Big)\nonumber \\
        &\leq \tilde{C} \left( \|\boldsymbol{Z}_{0}\| + \lfloor nt \rfloor\|\mathsf{m}\|\big(\|\boldsymbol{\alpha}\| + C_{\boldsymbol{g}}\big) + \sum_{j=1}^{\lfloor nt \rfloor} \|\boldsymbol{M}_{j}\|\right),
        \qquad n\in\N,\quad t\in\R_+,
    \end{align*}
    and
    \begin{align*}
        \sum_{k=0}^{\lfloor nt \rfloor -1} \|(\mathsf{I}_p-\tilde{\mathsf{\Pi}})\boldsymbol{Z}_{k}\| &\leq \tilde{c} \sum_{k=0}^{\lfloor nt \rfloor -1} \tilde{r}^k \|\boldsymbol{Z}_{0}\| + \tilde{c} \sum_{k=0}^{\lfloor nt \rfloor -1} \sum_{j=1}^k \tilde{r}^{k-j} \Big( \|\boldsymbol{M}_{j}\| + \|\mathsf{m}\|\big(\|\boldsymbol{\alpha}\| + \|\boldsymbol{g}(\boldsymbol{Z}_{j-1})\|\big)\Big) \\
        &\leq \frac{\tilde{c}}{1-\tilde{r}} \left( \|\boldsymbol{Z}_{0}\| + \lfloor nt \rfloor\|\mathsf{m}\|\big(\|\boldsymbol{\alpha}\| + C_{\boldsymbol{g}}\big) + \sum_{j=1}^{\lfloor nt \rfloor}  \|\boldsymbol{M}_{j}\|\right),\qquad n\in\N, \quad t\in\R_+,
    \end{align*}
    where at the last inequality we also used that 
    \begin{align*}
      \sum_{k=0}^{\lfloor nt \rfloor-1} \sum_{j=1}^k \tilde r^{k-j} x_j 
                 & = \sum_{j=1}^{\lfloor nt \rfloor-1} \sum_{k=j}^{\lfloor nt \rfloor-1}
                      \tilde r^{k-j} x_j 
                  \leq \sum_{j=1}^{\lfloor nt \rfloor-1} x_j \sum_{k=j}^\infty 
                  \tilde r^{k-j} 
                  = \frac{1}{1-\tilde{r}} \sum_{j=1}^{\lfloor nt \rfloor-1} x_j \\
                 & \leq \frac{1}{1-\tilde{r}} \sum_{j=1}^{\lfloor nt \rfloor} x_j
                   \qquad \text{for all } \boldsymbol{x} = (x_1,\ldots,x_{\lfloor nt \rfloor})^\top 
                         \in\R_+^{\lfloor nt \rfloor}.
    \end{align*}
  Hence, to get (\ref{eq-cond-aux-1}) and (\ref{eq-cond-aux-2}) it is enough to show 
    \begin{align}\label{help_seged1}
        \frac{1}{n^2} \|\boldsymbol{Z}_{0}\| \stackrel{\mathrm{P}}{\longrightarrow} 0, \qquad \frac{1}{n^2} \sum_{k=1}^{\lfloor nT \rfloor}  \|\boldsymbol{M}_{k}\|
        \stackrel{\mathrm{P}}{\longrightarrow} 0 \qquad \text{as } n\to\infty,
    \end{align}
    for all $T>0$.
    
    The first convergence in \eqref{help_seged1} holds trivially, since,
    in fact, $n^{-2}\Vert \bZ_0\Vert\to 0$ as $n\to\infty$ almost surely.
    
     The second convergence in \eqref{help_seged1} follows from \eqref{eq-O-mart-diff-1},
     since,  by Jensen inequality, $\ev{\|\boldsymbol{M}_{k}\|} \leq \sqrt{\ev{\|\boldsymbol{M}_{k}\|^2} } = \OO(k^{1/2})$ as $k\to\infty$, and hence, for all $T>0$, we get that
     \begin{equation*}
         \sum_{k=1}^{\lfloor nT \rfloor} \ev{\|\boldsymbol{M}_{k}\|}  
         = \OO(n^{3/2}) \qquad \text{as } n\to\infty.
     \end{equation*}
  Consequently, $n^{-2} \sum_{k=1}^{\lfloor nT \rfloor}  \|\boldsymbol{M}_{k}\|$
     converges to $0$ in $L^1$ as $n\to\infty$.

 Next, we check  convergence (\ref{eq-cond-aux-gamma}). 
It is enough to prove that for all $T>0$, we have that
 \begin{align}\label{help_Gamma_L1}
  \frac{1}{n^2} \sum_{k=1}^{\lfloor nT \rfloor} \ev{\| \mathsf{\Gamma}(\boldsymbol{Z}_{k-1})\|}
  \to 0 \qquad  \text{as $n\to\infty$.}  
  \end{align}
By Hypothesis \ref{hyp-variance-control-o}, for all $\epsilon>0$ there exists $K(\epsilon)>0$ such that $\|\mathsf{\Gamma}(\boldsymbol{z})\| < \epsilon \|\boldsymbol{z}\|$ for each $\boldsymbol{z}\in\Z_+^p$ with $\|\boldsymbol{z}\|>K(\epsilon)$. 
Since
    \begin{equation*}
        \ev{\| \mathsf{\Gamma}(\boldsymbol{Z}_{k-1})\|} 
        = \ev{\| \mathsf{\Gamma}(\boldsymbol{Z}_{k-1})\|\left(\1_{\{\|\boldsymbol{Z}_{k-1}\|\leq K(\epsilon)\}} + \1_{\{\|\boldsymbol{Z}_{k-1}\|> K(\epsilon)\}}\right)},
        \qquad k\in\N,
    \end{equation*}
   for all $T>0$, $\epsilon>0$ and $n\in\N$, we have
    \begin{align*}
        \frac{1}{n^2} \sum_{k=1}^{\lfloor nT \rfloor} \ev{\| \mathsf{\Gamma}(\boldsymbol{Z}_{k-1})\|} 
        & \leq \frac{1}{n^2} \sum_{k=1}^{\lfloor nT \rfloor}
         \left(\max_{\{\boldsymbol{z}\in\Z_+^p : \|\boldsymbol{z}\| \leq K(\epsilon) \}} \| \mathsf{\Gamma}(\boldsymbol{z})\| + \epsilon \ev{\|\boldsymbol{Z}_{k-1}\|}\right) \\
        &\leq \frac{T}{n} \max_{\{\boldsymbol{z}\in\Z_+^p : \|\boldsymbol{z}\| \leq   
                                      K(\varepsilon) \}} \|\mathsf{\Gamma}(\boldsymbol{z})\|
                 + \frac{\epsilon}{n^2}\sum_{k=1}^{\lfloor nT \rfloor} \ev{\|
                         \boldsymbol{Z}_{k-1}\|},
    \end{align*}
    where $ \max_{\{\boldsymbol{z}\in\Z_+^p : \|\boldsymbol{z}\| \leq   
    K(\varepsilon) \}} \|\mathsf{\Gamma}(\boldsymbol{z})\| < \infty$.
    Using part (i) of Lemma \ref{lemma-moments}, we get 
    \[
    \frac{1}{n^2}\sum_{k=1}^{\lfloor nT \rfloor} \ev{\|
                         \boldsymbol{Z}_{k-1}\|} = \OO(1)
                         \qquad \text{as } n\to\infty.
    \]
    Hence for all $T>0$ and $\epsilon>0$, we get 
    \[
     \lim_{n\to\infty} 
     \left( \frac{T}{n} \max_{\{\boldsymbol{z}\in\Z_+^p : \|\boldsymbol{z}\| \leq   
                                      K(\varepsilon) \}} \|\mathsf{\Gamma}(\boldsymbol{z})\|
                 + \frac{\epsilon}{n^2}\sum_{k=1}^{\lfloor nT \rfloor} \ev{\|
                         \boldsymbol{Z}_{k-1}\|} \right)
                =0,
    \]
  which yields \eqref{help_Gamma_L1}, as desired.
    
    Similarly, we prove (\ref{eq-cond-aux-g}) by checking 
    that for all $T>0$, we get that
    \begin{align}\label{help_g_L1}
      \frac{1}{n^2} \sum_{j=1}^{\lfloor nT\rfloor} \sum_{k=0}^{j-1} \ev{\|\boldsymbol{g}(\boldsymbol{Z}_{k})\|}  \to 0
      \qquad \text{as } n\to\infty.
    \end{align}
      By Hypothesis \ref{hyp-g}, for all $\epsilon>0$ there exists $N(\epsilon)>0$ such that $\|\boldsymbol{g}(\boldsymbol{z})\| < \epsilon$ for all $\boldsymbol{z}\in\Z_+^p$ with $\|\boldsymbol{z}\|>N(\epsilon)$. 
      Then, using the notation $C_{\boldsymbol{g}}$ introduced in \eqref{eq-C-g}, we get
    \begin{align}\label{help_g_L1_2}
        \ev{\| \boldsymbol{g}(\boldsymbol{Z}_{k})\|} &= \ev{\| \boldsymbol{g}(\boldsymbol{Z}_{k})\|\left(\1_{\{\|\boldsymbol{Z}_{k}\|\leq N(\epsilon)\}} + \1_{\{\|\boldsymbol{Z}_{k}\|> N(\epsilon)\}}\right)} \nonumber\\
        & \leq C_{\boldsymbol{g}} \prob{\|\boldsymbol{Z}_{k}\|\leq N(\epsilon)} + \epsilon , \qquad k\in\N, 
    \end{align}
    and then using Hypothesis \ref{hyp-explosion}  we can derive that
    for all $T>0$ and $\epsilon>0$,
    \begin{align}\label{hyp-explosion-use-1}
       \begin{split}
        \sum_{j=1}^{\lfloor nT\rfloor} \sum_{k=0}^{j-1} \ev{\|\boldsymbol{g}(\boldsymbol{Z}_{k})\|} &
           = \sum_{k=0}^{\lfloor nT\rfloor- 1} \sum_{j=k+1}^{\lfloor nT\rfloor} 
             \ev{\|\boldsymbol{g}(\boldsymbol{Z}_{k})\|} \\ 
        &\leq \epsilon \lfloor nT\rfloor^2 
         + C_{\boldsymbol{g}} \sum_{k=0}^{\lfloor nT\rfloor-1}  (\lfloor nT\rfloor -k)\prob{\|\boldsymbol{Z}_{k}\|\leq N(\epsilon)} \\
        & \leq \epsilon \lfloor nT\rfloor^2 
         + C_{\boldsymbol{g}} \sum_{k=0}^{k_0 (\varepsilon,N(\epsilon)) -1}  (\lfloor nT\rfloor -k)\prob{\|\boldsymbol{Z}_{k}\|\leq N(\epsilon)}\\
        & \quad + C_{\boldsymbol{g}} \sum_{k=k_0 (\epsilon,N(\epsilon))}^{\lfloor nT\rfloor -1}  (\lfloor nT\rfloor -k)\epsilon
      \end{split}  
    \end{align}
   for sufficiently large $n\in\N$, where
   \[
    \sum_{k=0}^{k_0 (\epsilon,N(\epsilon)) -1}  (\lfloor nT\rfloor -k)\prob{\|\boldsymbol{Z}_{k}\|\leq N(\epsilon)}
        \leq k_0 (\epsilon,N(\epsilon)) \lfloor nT\rfloor,
   \]
  and 
  \[
    \sum_{k=k_0 (\epsilon,N(\epsilon))}^{\lfloor nT\rfloor -1}  (\lfloor nT\rfloor -k)\epsilon \leq \epsilon n^2 T^2.
  \]
 As a consequence, for all $T>0$ and $\epsilon>0$, we have 
  \[
   \limsup_{n\to\infty} \frac{1}{n^2} \sum_{j=1}^{\lfloor nT\rfloor} \sum_{k=0}^{j-1} \ev{\|\boldsymbol{g}(\boldsymbol{Z}_{k})\|}
    \leq \epsilon T^2 + \epsilon C_{\boldsymbol{g}} T^2.
  \]
  Hence, by taking the limit as  $\epsilon\downarrow 0$, we obtain \eqref{help_g_L1}, as desired.

\subsubsection*{Step 1/D}
   
We show (\ref{lindeberg-condition}).
 For this, it is enough to verify that for all $T>0$ and $\theta>0$, we have
 \begin{align}\label{help_Step1D}
  \frac{1}{n^2} \sum_{k=1}^{\lfloor nT\rfloor}
     \ev{\|\boldsymbol{M}_k\|^2 \mathds{1}_{\{\|\boldsymbol{M}_k
       \|>n\theta\}}} \to 0   \qquad \text{as } n\to\infty.
 \end{align}
Using that
\begin{equation*}
     \ev{\|\boldsymbol{M}_k\|^2 \mathds{1}_{\{\|\boldsymbol{M}_k\|>n\theta\}}} \leq \frac{1}{n^2 \theta^2} \ev{\|\boldsymbol{M}_k\|^4},
    \qquad n,k\in\N,
\end{equation*}
the convergence \eqref{help_Step1D} follows from equation \eqref{eq-O-mart-diff-4}, since 
\begin{equation*}
   \frac{1}{n^2} \sum_{k=1}^{\lfloor nT\rfloor}
     \ev{\|\boldsymbol{M}_k\|^2 \mathds{1}_{\{\|\boldsymbol{M}_k
       \|>n\theta\}}}
    \leq \frac{1}{\theta^2 n^4} \sum_{k=1}^{\lfloor nT \rfloor} \ev{\|\boldsymbol{M}_k\|^4}\to 0 \qquad \text{as } n\to\infty .
\end{equation*}
Consequently, we finished the proof of \eqref{eq-conv-M}.

\subsection*{Proof of Step 2}

We want to apply Theorem \ref{cont-map-thm} to prove the convergence \eqref{help_psi_n_to_psi}.
   We need to check that the assumptions of Theorem \ref{cont-map-thm} are satisfied.
 The continuity of $\boldsymbol{\Psi}$ can be straightforwardly deduced 
 (following also from Jacod and Shiryaev \cite[Chapter VI, Proposition 1.23]{JACOD_SHIRYAEV_2003}), 
  so the measurability of $\boldsymbol{\Psi}$ holds. For the sequence $(\boldsymbol{\Psi}_n)_{n\in\N}$ and $N\in\N$, let us introduce the 
  localized sequence $(\boldsymbol{\Psi}_n^N)_{n\in\N}$ given by
  $\boldsymbol{\Psi}_n^N : \mathbf{D}(\R_+,\R^p)\to\mathbf{D}(\R_+,\R^p)$, $\boldsymbol{\Psi}_n^N (\boldsymbol{f}) (t)  \colonequals  \boldsymbol{\Psi}_n (\boldsymbol{f}) (t \wedge N)$ for $\boldsymbol{f}\in \mathbf{D}(\R_+,\R^p)$, $t\in\R_+$, $n\in\N$.
Since, for each $n\in\N$ and $\boldsymbol{f}\in\mathbf{D}(\R_+,\R^p)$,  
 $\boldsymbol{\Psi}_n^N(\boldsymbol{f}) \to \boldsymbol{\Psi}_n(\boldsymbol{f})$ as $N\to\infty$, it is enough to check the measurability of $\boldsymbol{\Psi}_n^N$, $n\in\N$ (see Barczy et al.\ \cite[page 603]{BARCZY_ISPANY_PAP_2011} for the details). 
Briefly, for each $n\in\N$, one can introduce the auxiliary measurable mappings $\boldsymbol{\Psi}_n^{N,1} :\mathbf{D}(\R_+,\R^p)\to (\R^p)^{nN+1}$ and  $\boldsymbol{\Psi}_n^{N,2} :(\R^p)^{nN+1}\to \mathbf{D}(\R_+,\R^p)$ defined by
    \begin{align*}
        \boldsymbol{\Psi}_n^{N,1} (\boldsymbol{f}) & \colonequals\left(\boldsymbol{f}\left(0\right), \boldsymbol{f}\left(\frac{1}{n}\right),\boldsymbol{f}\left(\frac{2}{n}\right),\ldots,\boldsymbol{f}\left(N\right)\right)
    \end{align*}
    for $\boldsymbol{f}\in  \mathbf{D}(\R_+,\R^p)$, and
     \begin{align*}
        \boldsymbol{\Psi}_n^{N,2}(\boldsymbol{x}_0, \boldsymbol{x}_1,\ldots,\boldsymbol{x}_{nN})(t) &\colonequals \tilde{\mathsf{m}}^{\lfloor n(t\wedge N)\rfloor}\boldsymbol{x}_0 + \sum_{j=1}^{\lfloor n(t\wedge N)\rfloor} \tilde{\mathsf{m}}^{\lfloor n(t\wedge N)\rfloor - j} \left(\boldsymbol{x}_j - \boldsymbol{x}_{j-1} + \frac{1}{n}\mathsf{m}\boldsymbol{\alpha}\right)
    \end{align*}
    for $(\boldsymbol{x}_0, \boldsymbol{x}_1,\ldots,\boldsymbol{x}_{nN})\in(\R^p)^{nN+1}$ and $t\in\R_+$.
    Then we have that $\boldsymbol{\Psi}_n^N 
     =  \boldsymbol{\Psi}_n^{N,2} \circ\boldsymbol{\Psi}_n^{N,1}$, $n,N\in\N$.

   Consider the set $\mathbf{C}=\{\boldsymbol{f}\in\mathbf{C}(\R_+,\R^p):\tilde{\mathsf{\Pi}}\boldsymbol{f}(0)=\boldsymbol{f}(0)\}$.   
   Let us check now that $\mathbf{C}$ is measurable and $\prob{\boldsymbol{\mathcal{M}}\in\mathbf{C}}=1.$
The projection $\mathbf{D}(\R_+,\R^p)\ni \boldsymbol{f} \mapsto \boldsymbol{\pi}_0 (\boldsymbol{f}) \colonequals \boldsymbol{f}(0)\in\R^p$ and the mapping $\R^p \ni \boldsymbol{x} \mapsto (\mathsf{I}_p - \Tilde{\mathsf{\Pi}})\boldsymbol{x}\in\R^p$ are measurable.
Since $\mathbf{C}(\R_+,\R^p)$ is a measurable set (see Ethier and Kurtz \cite[Problem 3.11.25]{EthKur}), we have $\mathbf{C} = \mathbf{C}(\R_+,\R^p) \cap \boldsymbol{\pi}_0^{-1} (\operatorname{Null}(\mathsf{I}_p-\Tilde{\mathsf{\Pi}}))\in\mathcal{D}_\infty (\R_+,\R^p)$.
Furthermore, in Step 1, we proved that $\boldsymbol{\mathcal{M}}$ is the pathwise unique strong solution of SDE \eqref{eq-SDE-M} with initial value $\boldsymbol{0}_p$. Hence it has continuous sample paths almost surely, and 
   $\tilde{\mathsf{\Pi}}\boldsymbol{\mathcal{M}}_0= \tilde{\mathsf{\Pi}}\boldsymbol{0}_p=\boldsymbol{0}_p=\boldsymbol{\mathcal{M}}_0$.
   Finally, the procedure to show that $\mathbf{C}\subset\mathbf{C}_{\boldsymbol{\Phi},(\boldsymbol{\Phi}_n)_{n\in\N}}$ follows the same steps as on page 736
    in Isp\'any and Pap \cite{ispany-pap-2014}.

\subsection*{Proof of Step 3}

First, we check that 
 $\boldsymbol{\mathcal{Z}}^{(n)} = \boldsymbol{\Psi}_n(\boldsymbol{\mathcal{M}}^{(n)})  + \boldsymbol{\mathcal{V}}^{(n)}$, $n\in\N$.
Using  Hypothesis \ref{hyp-g} and equations \eqref{eq-conditional-expected-value}, 
 \eqref{eq-def-mtg-dif}, \eqref{help_psi_n}, and \eqref{help_V}, we get
 \begin{align*}
    (\boldsymbol{\Psi}_n(\boldsymbol{\mathcal{M}}^{(n)}))_t
     &= \tilde{\mathsf{m}}^{\lfloor nt\rfloor} 
              \frac{1}{n}\boldsymbol{Z}_0   
             + \sum_{j=1}^{\lfloor nt \rfloor} \tilde{\mathsf{m}}^{\lfloor nt\rfloor -j}  
                 \Big(\frac{1}{n} \boldsymbol{M}_j + \frac{1}{n}\mathsf{m} \boldsymbol{\alpha} \Big) \\
    & = \frac{1}{n} \tilde{\mathsf{m}}^{\lfloor nt\rfloor} 
              \boldsymbol{Z}_0   
             + \frac{1}{n} \sum_{j=1}^{\lfloor nt \rfloor} \tilde{\mathsf{m}}^{\lfloor nt\rfloor -j}  
                 \big( \boldsymbol{Z}_j  - \mathsf{m}\varepsilon(\boldsymbol{Z}_{j-1}) 
                        + \mathsf{m}\boldsymbol{\alpha} \big)  \\             
    &=  \frac{1}{n} \tilde{\mathsf{m}}^{\lfloor nt\rfloor} \boldsymbol{Z}_0  
      + \frac{1}{n}\sum_{j=1}^{\lfloor nt \rfloor} \tilde{\mathsf{m}}^{\lfloor nt\rfloor -j} 
          \Big( \boldsymbol{Z}_j - \tilde{\mathsf{m}} \boldsymbol{Z}_{j-1} 
                     -\mathsf{m}  g(\boldsymbol{Z}_{j-1}) \Big)  \\
    &= n^{-1} \boldsymbol{Z}_{\lfloor nt \rfloor} - n^{-1} \sum_{j=1}^{\lfloor nt\rfloor} \tilde{\mathsf{m}}^{\lfloor nt\rfloor - j} \mathsf{m}\boldsymbol{g}(\boldsymbol{Z}_{j-1})\\
   &= n^{-1} \boldsymbol{Z}_{\lfloor nt \rfloor} - \boldsymbol{\mathcal{V}}_t^{(n)}
     = \boldsymbol{\mathcal{Z}}^{(n)}_t - \boldsymbol{\mathcal{V}}_t^{(n)},
     \qquad  n\in\N, \quad t\in\R_+,
\end{align*}
 as desired.
 
Next, we show \eqref{eq-conv-sum-seq}. 
Taking into account \eqref{help_psi_n_to_psi} and Jacod and Shiryaev \cite[Chapter VI, Lemma 3.31]{JACOD_SHIRYAEV_2003}, in order to prove \eqref{eq-conv-sum-seq},
 it is enough to see that for all $T>0$ and $\delta>0$, we get
\begin{equation}\label{help_cV_conv}
    \lim_{n\to\infty} \prob{\sup_{t\in[0,T]} \|\boldsymbol{\mathcal{V}}_t^{(n)}\|\geq\delta} = 0.
\end{equation}

This can be checked as follows.
For all $T>0$ and $\delta>0$, by Markov's inequality and \eqref{eq-C-tilde}, we get
    \begin{align*}
        \prob{\sup_{t\in[0,T]} \|\boldsymbol{\mathcal{V}}_t^{(n)}\|\geq\delta} &\leq \delta^{-1} \ev{n^{-1} \sum_{j=1}^{\lfloor nT\rfloor} \|\tilde{\mathsf{m}}^{\lfloor nT\rfloor - j} \mathsf{m}\boldsymbol{g}(\boldsymbol{Z}_{j-1}) \|} \\
        & \leq \tilde{C}\|\mathsf{m}\| \delta^{-1} n^{-1} \sum_{j=1}^{\lfloor nT\rfloor}  \ev{\|\boldsymbol{g}( \boldsymbol{Z}_{j-1}) \|},
        \qquad n\in\N,
    \end{align*}
    and then we proceed as in the proof of (\ref{eq-cond-aux-g}). 
    For all $\epsilon>0$, there exists $N(\epsilon)>0$ such that $\|\boldsymbol{g}(\boldsymbol{z})\| < \epsilon$ for each $\boldsymbol{z}\in\Z_+^p$ with $\|\boldsymbol{z}\|>N(\epsilon)$. 
    So, using Hypothesis \ref{hyp-explosion} and \eqref{help_g_L1_2}, we get
    for all $T>0$ and $\epsilon>0$
    \begin{align}\label{hyp-explosion-use-2}
    \begin{split}
        \frac{1}{n} \sum_{j=1}^{\lfloor nT\rfloor} \ev{\|\boldsymbol{g}(\boldsymbol{Z}_{j-1})\|} & \leq \frac{1}{n} \sum_{j=1}^{\lfloor nT\rfloor} \left(C_{\boldsymbol{g}} \prob{\|\boldsymbol{Z}_{j-1}\|\leq N(\epsilon)} + \epsilon\right) \\
        & \leq \epsilon T + \frac{C_{\boldsymbol{g}}}{n} \left(\sum_{j=0}^{k_0 (\epsilon,N(\epsilon)) -1}  \prob{\|\boldsymbol{Z}_{j}\|\leq N(\epsilon)} + \sum_{j=k_0 (\epsilon,N(\epsilon))}^{\lfloor nT \rfloor -1}  \epsilon \right)  \\
        &\leq \epsilon T + C_{\boldsymbol{g}} k_0 (\epsilon,N(\epsilon)) 
                                \frac{1}{n} + \epsilon  C_{\boldsymbol{g}} T
    \end{split}
    \end{align}
    for sufficiently large $n\in\N$.
    By taking the limit as $n\to\infty$, we have that for all $T>0$ and $\epsilon>0$,
    \[
      \limsup_{n\to\infty} \frac{1}{n} \sum_{j=1}^{\lfloor nT\rfloor} \ev{\|\boldsymbol{g}
                        (\boldsymbol{Z}_{j-1})\|} 
        \leq  \epsilon T + \epsilon C_{\boldsymbol{g}} T,   
    \]
    and then taking the limit as $\epsilon \downarrow 0$, we obtain that 
    $n^{-1} \sum_{j=1}^{\lfloor nT\rfloor} \ev{\|\boldsymbol{g}
    (\boldsymbol{Z}_{j-1})\|} \to 0$ as $n\to\infty$,
    yielding \eqref{help_cV_conv}, as desired.
\proofend

\appendix
\section{Appendix}\label{appendix}

The reader can consult Horn and Johnson \cite{horn_johnson_1985}
 for the following known facts about primitive non-negative matrices (see Definition 8.5.0 and Theorems 8.2.8, 8.5.1 and 8.5.2  in \cite{horn_johnson_1985}). 
A primitive matrix $\mathsf{A}\in\R_+^{p\times p}$ is an irreducible matrix with only one eigenvalue of maximum modulus (the so-called Perron--Frobenius eigenvalue), or equivalently, $\mathsf{A}\in\R_{+}^{p\times p}$ is primitive  if and only if there exists $n\in\N$ such that $\mathsf{A}^n\in\R_{++}^{p\times p}.$ By the Perron--Frobenius theorem, we have the following lemma.

\begin{lemma}\label{lemma-primitive}
    Let $\mathsf{A}\in\R_{+}^{p\times p}$ be a primitive matrix and let $\rho$ be its Perron--Frobenius eigenvalue. The following statements hold:
    \begin{enumerate}[label=(\roman*)]
        \item $\rho\in\R_{++},$ its algebraic and geometric multiplicity are equal to 1, and the absolute values of the other eigenvalues of $\mathsf{A}$ are less than $\rho$.
        \item There exists a unique right eigenvector $\boldsymbol{u}\in\R_{++}^p$ and a unique left eigenvector $\boldsymbol{v}\in\R_{++}^p$ corresponding to $\rho$ such that the sum of the coordinates of $\boldsymbol{u}$ is 1 and $\boldsymbol{v}^\top \boldsymbol{u} = 1$.
        One calls $\boldsymbol{u}$ and $\boldsymbol{v}$ the right and left
         Perron--Frobenius eigenvector, respectively.
        \item If $\rho =1,$ then $\lim_{k\to\infty} \mathsf{A}^k = \mathsf{\Pi}$ and there exist $c\in\R_{++}$ and $r\in(0,1)$ such that $\|\mathsf{A}^k-\mathsf{\Pi}\|\leq cr^k$ for each $k\in\N$, where $\mathsf{\Pi} \colonequals \boldsymbol{u}\boldsymbol{v}^\top\in\R_{++}^{p\times p}$.
    \end{enumerate}
\end{lemma}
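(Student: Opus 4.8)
The plan is to derive all three statements from the Perron--Frobenius theorem for primitive non-negative matrices, invoking the facts collected in Horn and Johnson \cite{horn_johnson_1985} (Definition 8.5.0 and Theorems 8.2.8, 8.5.1, 8.5.2). For part (i), I would recall that a primitive matrix is irreducible and has, by definition, a single eigenvalue of maximum modulus. The Perron--Frobenius theorem for irreducible non-negative matrices then gives that the spectral radius $\rho$ is a positive real eigenvalue whose algebraic multiplicity (and hence geometric multiplicity) equals $1$; primitivity rules out further eigenvalues of modulus $\rho$, so every remaining eigenvalue $\lambda$ satisfies $|\lambda|<\rho$. This is a direct transcription of the cited theorems.

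For part (ii), since $\rho$ is a simple eigenvalue the right eigenspace of $\mathsf{A}$ is one-dimensional, and the Perron--Frobenius theorem guarantees it is spanned by a strictly positive vector; normalizing so that its coordinates sum to one pins down a unique $\boldsymbol{u}\in\R_{++}^p$. Applying the same reasoning to $\mathsf{A}^\top$, which is primitive with the same Perron--Frobenius eigenvalue $\rho$, yields a strictly positive left eigenvector $\boldsymbol{v}$ of $\mathsf{A}$. Because left and right eigenvectors of a simple eigenvalue cannot be orthogonal, $\boldsymbol{v}^\top\boldsymbol{u}\neq 0$, so I may rescale $\boldsymbol{v}$ to enforce $\boldsymbol{v}^\top\boldsymbol{u}=1$, which determines $\boldsymbol{v}\in\R_{++}^p$ uniquely once $\boldsymbol{u}$ is fixed.

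For part (iii), assume $\rho=1$. Since $1$ is a simple eigenvalue, the Riesz spectral projection onto its eigenspace is $\mathsf{\Pi}=\boldsymbol{u}\boldsymbol{v}^\top$ (using $\boldsymbol{v}^\top\boldsymbol{u}=1$), which lies in $\R_{++}^{p\times p}$ and satisfies $\mathsf{A}\mathsf{\Pi}=\mathsf{\Pi}\mathsf{A}=\mathsf{\Pi}$ and $\mathsf{\Pi}^2=\mathsf{\Pi}$. I would then write $\mathsf{A}^k-\mathsf{\Pi}=\mathsf{A}^k(\mathsf{I}_p-\mathsf{\Pi})=(\mathsf{A}(\mathsf{I}_p-\mathsf{\Pi}))^k$ and note that the spectrum of $\mathsf{A}(\mathsf{I}_p-\mathsf{\Pi})$ consists of the remaining eigenvalues of $\mathsf{A}$ together with $0$, so its spectral radius $r_0:=\max\{|\lambda|:\lambda\neq 1 \text{ eigenvalue of } \mathsf{A}\}$ is strictly less than $1$ by part (i). The conclusion then follows from the uniform geometric bound: for any $\tilde r\in(r_0,1)$ there exists $\tilde c\in\R_{++}$ with $\|(\mathsf{A}(\mathsf{I}_p-\mathsf{\Pi}))^k\|\leq \tilde c\,\tilde r^k$ for all $k\in\N$, whence $\lim_{k\to\infty}\mathsf{A}^k=\mathsf{\Pi}$ at the asserted exponential rate.

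The only substantive step, if one wants a self-contained argument instead of citing \cite{horn_johnson_1985} outright, is the uniform geometric estimate in part (iii): it is obtained either from Gelfand's formula for the spectral radius, or concretely from the Jordan decomposition, where a Jordan block for an eigenvalue $\lambda$ contributes terms of order $k^{m}|\lambda|^{k}$ that are absorbed into $\tilde r^{k}$ precisely because $|\lambda|\leq r_0<\tilde r$. Every other assertion is bookkeeping of eigenvalues and eigenvectors that the Perron--Frobenius theorem supplies directly, so this rate bound is where I expect the main (though still routine) work to lie.
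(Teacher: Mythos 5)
Your proposal is correct and takes essentially the same approach as the paper: the paper offers no separate proof of this lemma, deriving it directly from the Perron--Frobenius theory for primitive non-negative matrices in Horn and Johnson (Definition 8.5.0 and Theorems 8.2.8, 8.5.1 and 8.5.2), which is exactly what you invoke. Your extra detail for part (iii) --- the identity $\mathsf{A}^k-\mathsf{\Pi}=\bigl(\mathsf{A}(\mathsf{I}_p-\mathsf{\Pi})\bigr)^k$ together with the Gelfand/Jordan geometric bound --- is a correct, routine elaboration of what the cited theorems already supply.
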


Next, we will investigate the asymptotic behaviour of the first and second moments 
 of the norm  $\Vert\bZ_k\Vert$ for a critical CMBP $(\bZ_k)_{k\in\Z_+}$. 
We will also study the same question for the  second and fourth moments of 
 $\|\boldsymbol{M}_k\|$, where $(\boldsymbol{M}_k)_{k\in\N}$ is the martingale 
 difference sequence, given in \eqref{eq-def-mtg-dif}, 
 built from $(\bZ_k)_{k\in\Z_+}$.
We need the following auxiliary result,  presented and proved first. 

\begin{lemma}\label{lemma-aux}
 Let $A$, $A_n$, $n\in\N$, be independent and identically distributed $\Z_+$--valued random variables with zero mean. 
If $B$ is a $\Z_+$--valued random variable independent of $A$ and $A_n$, $n\in\N$, then
    \begin{equation*}
        \ev{\left(\sum_{i=1}^B A_i\right)^4} = 3 \Sigma_A^2 (\Gamma_B + \mu_B^2) + (\zeta_A- 3 \Sigma_A^2)\mu_B,
    \end{equation*}
    where $\Sigma_A \colonequals \var{A}$, $\zeta_A \colonequals \ev{A^4}$, $\mu_B \colonequals \ev{B}$, 
    and $\Gamma_B \colonequals \var{B}$.
\end{lemma}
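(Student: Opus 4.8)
The plan is to condition on $B$ and thereby reduce the problem to the fourth moment of a sum of a \emph{fixed} number of i.i.d.\ mean-zero summands, and then to average the resulting expression over the law of $B$. Concretely, introduce the deterministic function
\begin{equation*}
 g(n) \colonequals \ev{\left(\sum_{i=1}^n A_i\right)^4}, \qquad n\in\Z_+,
\end{equation*}
with the convention $g(0)=0$. Since $B$ is $\Z_+$-valued and independent of $(A_i)_{i\in\N}$, a standard conditioning argument (tower rule) gives $\ev{(\sum_{i=1}^B A_i)^4} = \ev{g(B)}$, so it remains to find a closed form for $g(n)$ and then to take the expectation in $B$.

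For the inner computation I would expand the fourth power as $g(n)=\sum_{i,j,k,l=1}^n \ev{A_iA_jA_kA_l}$ and exploit independence together with $\ev{A_i}=0$. A mixed term $\ev{A_iA_jA_kA_l}$ factorizes across distinct indices, so any index appearing with multiplicity exactly one forces a factor $\ev{A}=0$; hence the only surviving tuples are those in which every distinct index occurs at least twice. Among four positions this leaves precisely two patterns: all four indices equal, contributing $n$ terms each equal to $\ev{A^4}=\zeta_A$; and two distinct indices each occurring twice, contributing $\binom{n}{2}\cdot\frac{4!}{2!\,2!}=3n(n-1)$ terms, each equal to $\ev{A^2}^2=\Sigma_A^2$ because $\ev{A^2}=\var{A}=\Sigma_A$ under the mean-zero assumption. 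Therefore
\begin{equation*}
 g(n) = n\,\zeta_A + 3n(n-1)\,\Sigma_A^2, \qquad n\in\Z_+.
\end{equation*}

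Finally, I would substitute $B$ for $n$ and average, using $\ev{B}=\mu_B$ and $\ev{B(B-1)}=\ev{B^2}-\ev{B}=\Gamma_B+\mu_B^2-\mu_B$, to obtain
\begin{equation*}
 \ev{\left(\sum_{i=1}^B A_i\right)^4}
   = \zeta_A\,\mu_B + 3\Sigma_A^2\big(\Gamma_B+\mu_B^2-\mu_B\big)
   = 3\Sigma_A^2\big(\Gamma_B+\mu_B^2\big) + (\zeta_A-3\Sigma_A^2)\,\mu_B,
\end{equation*}
which is exactly the claimed identity. The only genuinely delicate point is the combinatorial bookkeeping in the expansion of $g(n)$: one must verify both that every term with a singly-occurring index vanishes and that the count of two-pair configurations is $3n(n-1)$ (equivalently $\binom{n}{2}$ unordered index pairs times the $6$ ways of placing the multiset $\{a,a,b,b\}$ into the four ordered slots). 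Once this counting is pinned down, the remaining steps are immediate, and the mean-zero hypothesis is used twice: to discard the singleton terms and to identify $\ev{A^2}$ with $\Sigma_A$.
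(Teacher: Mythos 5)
Your proposal is correct and follows essentially the same route as the paper: both condition on $B$ (the paper inside a conditional expectation, you via the auxiliary function $g(n)$), kill all mixed terms containing a singly-occurring index using independence and the zero mean, count the surviving diagonal terms ($n$ fourth-moment terms and $3n(n-1)$ two-pair terms), and then average over $B$ using $\ev{B(B-1)}=\Gamma_B+\mu_B^2-\mu_B$. The combinatorial bookkeeping you flag as the delicate point is exactly what the paper also relies on, so there is nothing substantive to add.
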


\begin{proof}
    By the properties of the conditional expectation, we can get
    \begin{equation*}
        \ev{\left(\sum_{i=1}^B A_i\right)^4} = \ev{\evcond{\sum_{i,j,k,l=1}^B A_iA_jA_kA_l}{B}} = \ev{\sum_{i=1}^B \ev{A_i^4}} + 3\ev{\sum_{\substack{i,j=1 \\ i\neq j}}^B \ev{A_i^2A_j^2}},
    \end{equation*}
    where the terms that would correspond to $\ev{A_i^3A_j}$, $\ev{A_i^2A_jA_k}$ and $\ev{A_iA_jA_kA_l}$ (indices $i,j,k,l$ are different from each other) vanish due to independence and the zero mean of the random variables in question.

Let us calculate the summands on the right hand side of the expression
 above:
    \begin{equation*}
        \ev{\sum_{i=1}^B \ev{A_i^4}} = \zeta_A \mu_B,
    \end{equation*}
  and
    \begin{equation*}
        \ev{\sum_{\substack{i,j=1 \\ i\neq j}}^B \ev{A_i^2A_j^2}} = \Sigma_A^2 \ev{B(B-1)} = \Sigma_A^2 (\Gamma_B + \mu_B^2-\mu_B).
    \end{equation*}
This yields the assertion.
\end{proof}

\begin{lemma}\label{lemma-moments}
    Let $(\boldsymbol{Z}_k)_{k\in\Z_+}$ be a controlled $p$--type branching process given in \eqref{CMBP_def} such that
    $\ev{\|\bZ_0\|}$, $\ev{\|\boldsymbol{X}_{0,1,i}\|}$ and 
    $\ev{\|\boldsymbol{\phi}_{0}(\boldsymbol{z})\|}$ are finite for $i\in\{1,\ldots,p\}$ and $\bz\in\Z_+^p$.
    Assume that    
    $\boldsymbol{\varepsilon}(\boldsymbol{z}) = \mathsf{\Lambda}\boldsymbol{z} + \boldsymbol{h}(\boldsymbol{z})$, $ \bz\in\Z_+^p$, where $\mathsf{\Lambda}\in\R^{p\times p}$ and $\boldsymbol{h}:\Z_+^p\to\R^p$  satisfies $\|\boldsymbol{h}(\boldsymbol{z})\| = \OO(1)$ as $\|\boldsymbol{z}\|\to\infty$. 
    Further, suppose that Hypothesis \ref{hyp-primitive-relaxed} holds.
    \begin{enumerate}[label=(\roman*)]
        \item  Then we have
        \begin{equation}\label{eq-big-O-first-moment}
            \ev{\|\boldsymbol{Z}_{k}\|} = \OO(k) \qquad \text{as $k\to\infty$.} 
        \end{equation}
        \item If, in addition, $\ev{\|\bZ_0\|^2}$, $\ev{\|\boldsymbol{X}_{0,1,i}\|^2}$ and $\ev{\|\boldsymbol{\phi}_{0}(\boldsymbol{z})\|^2}$ are finite for $i\in\{1,\ldots,p\}$, $\bz\in\Z_+^p$, and $\|\mathsf{\Gamma}(\boldsymbol{z})\|= \OO(\|\boldsymbol{z}\|)$ as $\Vert \bz\Vert\to\infty$, then we have \eqref{eq-big-O-first-moment} and
        \begin{align}
            \ev{\|\boldsymbol{Z}_{k}\|^2} &= \OO(k^2) \qquad \text{as } k\to\infty, \label{eq-big-O-second-moment} \\
            \ev{\|\boldsymbol{M}_{k}\|^2} &= \OO(k) \qquad \text{as } k\to\infty. \label{eq-O-mart-diff-1}
        \end{align}
        \item If, in addition, Hypotheses \ref{hyp-second-moment-Z0}, \ref{hyp-moment-4} and
        $\|\mathsf{\Gamma}(\boldsymbol{z})\|= \OO(\|\boldsymbol{z}\|)$ as $\Vert \bz\Vert\to\infty$ hold, then we have \eqref{eq-big-O-first-moment}, \eqref{eq-big-O-second-moment}, \eqref{eq-O-mart-diff-1} and
        \begin{equation}\label{eq-O-mart-diff-4}
            \ev{\|\boldsymbol{M}_k\|^4} = \OO(k^2) \qquad \text{as } k\to\infty .
        \end{equation}
    \end{enumerate}
\end{lemma}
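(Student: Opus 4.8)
The plan is to prove the three parts in the order (i), then the two estimates of (ii), then (iii), each relying on the previous ones. The common engine is Proposition \ref{Pro_cond_moment} together with the identity $\evcond{\boldsymbol{Z}_{k}}{\mathcal{F}_{k-1}} = \tilde{\mathsf{m}}\boldsymbol{Z}_{k-1} + \mathsf{m}\boldsymbol{h}(\boldsymbol{Z}_{k-1})$, which follows from \eqref{eq-conditional-expected-value} and the linearity assumption $\boldsymbol{\varepsilon}(\boldsymbol{z}) = \mathsf{\Lambda}\boldsymbol{z} + \boldsymbol{h}(\boldsymbol{z})$. Throughout I would use the two uniform bounds $\tilde{C} := \sup_{j\in\Z_+}\|\tilde{\mathsf{m}}^{j}\| < \infty$ (finite by part \ref{hyp-pi-tilde} of Hypothesis \ref{hyp-primitive-relaxed}) and $C_{\boldsymbol{h}} := \sup_{\boldsymbol{z}\in\Z_+^p}\|\boldsymbol{h}(\boldsymbol{z})\| < \infty$ (finite since each $\boldsymbol{\varepsilon}(\boldsymbol{z})$ is finite and $\|\boldsymbol{h}(\boldsymbol{z})\| = \OO(1)$), together with $\|\boldsymbol{\varepsilon}(\boldsymbol{z})\| = \OO(\|\boldsymbol{z}\|)$.

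For \eqref{eq-big-O-first-moment}, I would take expectations in the conditional-mean identity and iterate, obtaining $\ev{\boldsymbol{Z}_{k}} = \tilde{\mathsf{m}}^{k}\ev{\boldsymbol{Z}_{0}} + \sum_{j=1}^{k}\tilde{\mathsf{m}}^{k-j}\mathsf{m}\ev{\boldsymbol{h}(\boldsymbol{Z}_{j-1})}$, whence $\|\ev{\boldsymbol{Z}_{k}}\| \leq \tilde{C}\|\ev{\boldsymbol{Z}_{0}}\| + k\,\tilde{C}\|\mathsf{m}\|C_{\boldsymbol{h}} = \OO(k)$. Since $\boldsymbol{Z}_{k}$ is $\R_+^{p}$--valued, $\ev{\|\boldsymbol{Z}_{k}\|}$ is controlled by $\|\ev{\boldsymbol{Z}_{k}}\|$ (the $\ell^1$--norm of the expectation of a nonnegative vector equals the expectation of its $\ell^1$--norm, and all norms on $\R^p$ are equivalent), giving \eqref{eq-big-O-first-moment}.

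For \eqref{eq-O-mart-diff-1} I would use that $\boldsymbol{M}_k$ is conditionally centered, so $\ev{\|\boldsymbol{M}_{k}\|^{2}} = \tr{\ev{\varcond{\boldsymbol{Z}_{k}}{\mathcal{F}_{k-1}}}}$; applying the conditional variance formula \eqref{eq-conditional-variance}, then bounding $\tr{\boldsymbol{\varepsilon}(\boldsymbol{z})\odot\boldsymbol{\mathsf{\Sigma}}} = \sum_i \varepsilon_i(\boldsymbol{z})\tr{\mathsf{\Sigma}_i} = \OO(\|\boldsymbol{z}\|)$ and $\tr{\mathsf{m}\mathsf{\Gamma}(\boldsymbol{z})\mathsf{m}^\top} \leq p\|\mathsf{m}\|^{2}\|\mathsf{\Gamma}(\boldsymbol{z})\| = \OO(\|\boldsymbol{z}\|)$, I get $\ev{\|\boldsymbol{M}_{k}\|^{2}} = \OO(\ev{\|\boldsymbol{Z}_{k-1}\|}) = \OO(k)$ by \eqref{eq-big-O-first-moment}. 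For \eqref{eq-big-O-second-moment} I would iterate $\boldsymbol{M}_{k} = \boldsymbol{Z}_{k} - \tilde{\mathsf{m}}\boldsymbol{Z}_{k-1} - \mathsf{m}\boldsymbol{h}(\boldsymbol{Z}_{k-1})$ to write $\boldsymbol{Z}_{k} = \tilde{\mathsf{m}}^{k}\boldsymbol{Z}_{0} + \sum_{j=1}^{k}\tilde{\mathsf{m}}^{k-j}\mathsf{m}\boldsymbol{h}(\boldsymbol{Z}_{j-1}) + S_{k}$ with $S_{k} := \sum_{j=1}^{k}\tilde{\mathsf{m}}^{k-j}\boldsymbol{M}_{j}$. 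The crucial point is the orthogonality of the martingale differences: for $i < j$ the cross term $\ev{\boldsymbol{M}_i^\top(\tilde{\mathsf{m}}^{k-i})^\top\tilde{\mathsf{m}}^{k-j}\boldsymbol{M}_j}$ vanishes upon conditioning on $\mathcal{F}_{j-1}$, so $\ev{\|S_{k}\|^{2}} = \sum_{j=1}^{k}\ev{\|\tilde{\mathsf{m}}^{k-j}\boldsymbol{M}_{j}\|^{2}} \leq \tilde{C}^{2}\sum_{j=1}^{k}\ev{\|\boldsymbol{M}_{j}\|^{2}} = \OO(k^{2})$ by \eqref{eq-O-mart-diff-1}; the remaining two summands are $\OO(1)$ and $\OO(k^{2})$, and the power-mean inequality combines them to give \eqref{eq-big-O-second-moment}.

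The main work is \eqref{eq-O-mart-diff-4}, and I expect it to be the principal obstacle. I would prove the sharper conditional bound $\evcond{\|\boldsymbol{M}_{k}\|^{4}}{\mathcal{F}_{k-1}} = \OO(\|\boldsymbol{Z}_{k-1}\|^{2})$ and conclude via $\ev{\|\boldsymbol{M}_{k}\|^{4}} = \ev{\evcond{\|\boldsymbol{M}_{k}\|^{4}}{\mathcal{F}_{k-1}}} = \OO(\ev{\|\boldsymbol{Z}_{k-1}\|^{2}}) = \OO(k^{2})$ using \eqref{eq-big-O-second-moment}. Since $\|\boldsymbol{M}_{k}\|^{4} \leq p\sum_{l=1}^{p} M_{k,l}^{4}$, it suffices to bound $\evcond{M_{k,l}^{4}}{\boldsymbol{Z}_{k-1} = \boldsymbol{z}}$ coordinatewise. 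Here I would use the decomposition $M_{k,l} = A_{l} + B_{l}$, where $A_{l} := \sum_{i=1}^{p}\sum_{j=1}^{\phi_{k-1,i}(\boldsymbol{z})}(X_{k-1,j,i,l} - m_{i,l})$ is the offspring-fluctuation part and $B_{l} := \sum_{i=1}^{p}(\phi_{k-1,i}(\boldsymbol{z}) - \varepsilon_i(\boldsymbol{z}))m_{i,l}$ the control-fluctuation part (a direct rearrangement shows these sum to $M_{k,l}$). By $(a+b)^{4}\leq 8(a^{4}+b^{4})$ and the power-mean inequality across types, the control part is handled by Hypothesis \ref{hyp-moment-4}, namely $\evcond{B_{l}^{4}}{\boldsymbol{z}} \leq p^{3}\sum_{i} m_{i,l}^{4}\kappa_i(\boldsymbol{z}) = \OO(\|\boldsymbol{z}\|^{2})$, whereas for the offspring part each single-type random sum $\sum_{j=1}^{\phi_{k-1,i}(\boldsymbol{z})}(X_{k-1,j,i,l} - m_{i,l})$ falls exactly under Lemma \ref{lemma-aux} with $B = \phi_{k-1,i}(\boldsymbol{z})$ (independent of the centered offspring), yielding a fourth moment of order $\mathsf{\Gamma}_{ii}(\boldsymbol{z}) + \varepsilon_i(\boldsymbol{z})^{2} + \varepsilon_i(\boldsymbol{z})$; invoking $\|\mathsf{\Gamma}(\boldsymbol{z})\| = \OO(\|\boldsymbol{z}\|)$ and $\|\boldsymbol{\varepsilon}(\boldsymbol{z})\| = \OO(\|\boldsymbol{z}\|)$ gives $\evcond{A_{l}^{4}}{\boldsymbol{z}} = \OO(\|\boldsymbol{z}\|^{2})$. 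The delicate point—absent in the MBPI case—is precisely that the randomness of the control makes the offspring sum a genuine random sum, forcing the use of Lemma \ref{lemma-aux}, and that the control fluctuation $B_l$ requires the fourth-order control moment bound of Hypothesis \ref{hyp-moment-4}.
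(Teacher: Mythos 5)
Your proposal is correct, and parts (i), \eqref{eq-O-mart-diff-1} and (iii) follow essentially the paper's own route: the mean recursion plus $\ev{\|\boldsymbol{\xi}\|}\leq p\|\ev{\boldsymbol{\xi}}\|$ for part (i); the trace of $\ev{\varcond{\boldsymbol{Z}_k}{\mathcal{F}_{k-1}}}$ via \eqref{eq-conditional-variance} for \eqref{eq-O-mart-diff-1}; and for \eqref{eq-O-mart-diff-4} the same add-and-subtract of $\phi_{k-1,i}(\boldsymbol{Z}_{k-1})\boldsymbol{m}_i$, power-mean inequalities, Hypothesis \ref{hyp-moment-4} for the control fluctuation and Lemma \ref{lemma-aux} for the random offspring sums (you merely swap the order of the coordinate and type decompositions, and you package the final step as a conditional bound $\OO(\|\boldsymbol{Z}_{k-1}\|^2)$ rather than bounding $\ev{\kappa_i(\boldsymbol{Z}_{k-1})}$, $\ev{\mathsf{\Gamma}_{i,i}(\boldsymbol{Z}_{k-1})}$, $\ev{\varepsilon_i(\boldsymbol{Z}_{k-1})^2}$, $\ev{\varepsilon_i(\boldsymbol{Z}_{k-1})}$ separately, which is the same computation). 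Where you genuinely diverge is \eqref{eq-big-O-second-moment}: the paper proves it \emph{before} \eqref{eq-O-mart-diff-1} by iterating the variance decomposition formula, obtaining a matrix recursion for $\var{\boldsymbol{Z}_k}$ with six terms that must be bounded individually, including the cross-covariances $\cov{\boldsymbol{Z}_{k-1}}{\boldsymbol{h}(\boldsymbol{Z}_{k-1})}$. You instead reverse the order, first proving \eqref{eq-O-mart-diff-1} (which indeed only needs part (i)), and then reading the second moment off the pathwise representation $\boldsymbol{Z}_{k} = \tilde{\mathsf{m}}^{k}\boldsymbol{Z}_{0} + \sum_{j=1}^{k}\tilde{\mathsf{m}}^{k-j}\bigl(\mathsf{m}\boldsymbol{h}(\boldsymbol{Z}_{j-1}) + \boldsymbol{M}_{j}\bigr)$ (the analogue of \eqref{help_Z_n_k}), using $L^2$-orthogonality of the martingale differences to kill the cross terms in $\ev{\|\sum_{j}\tilde{\mathsf{m}}^{k-j}\boldsymbol{M}_{j}\|^2}$. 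Your route is shorter and avoids the covariance bookkeeping entirely, at the price of having to know $\ev{\|\boldsymbol{M}_j\|^2}=\OO(j)$ first; it also aligns nicely with how the same representation is exploited in Step 1/C of the main proof. One small point common to both arguments, which you (like the paper) leave implicit, is that $\ev{\|\boldsymbol{Z}_k\|^2}<\infty$ for each fixed $k$ (needed for the orthogonality computation, respectively for the variance recursion, to make sense); this follows by a routine induction in $k$ from the assumptions of part (ii), and is worth a sentence.
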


\begin{proof}
 (i). By \eqref{eq-conditional-expected-value}, for each $k\in\N$, we have
    \begin{equation}\label{help_Exp_Z_k_rec}
        \ev{\boldsymbol{Z}_{k}} = \ev{\mathsf{m}\boldsymbol{\varepsilon}(\boldsymbol{Z}_{k-1})} = \tilde{\mathsf{m}}\ev{\boldsymbol{Z}_{k-1}} + \mathsf{m}\ev{\boldsymbol{h}(\boldsymbol{Z}_{k-1})} = \tilde{\mathsf{m}}^{k}\ev{\boldsymbol{Z}_{0}} + \sum_{j=0}^{k-1} \tilde{\mathsf{m}}^{j}\mathsf{m}\ev{\boldsymbol{h}(\boldsymbol{Z}_{k-1-j})}.
    \end{equation}
Hence, using the triangle inequality, for each $k\in\N$, we get
    \begin{equation*}
        \left\|\ev{\boldsymbol{Z}_{k}}\right\| \leq \|\tilde{\mathsf{m}}^{k}\|\ev{\|\boldsymbol{Z}_{0}\|} + \sum_{j=0}^{k-1} \|\tilde{\mathsf{m}}^{j}\|\|\mathsf{m}\|\ev{\|\boldsymbol{h}(\boldsymbol{Z}_{k-1-j})\|}.
    \end{equation*} 
We recall the power mean inequality used in the proofs several times:
 for all $n\in\N$, $a_i\in\R_+$, $i=1,\ldots,n$, and $0<k_1 \leq k_2$, 
 we get
 \begin{align}\label{mean_power_ineq}
  \left(\frac{1}{n}\sum_{i=1}^n a_i^{k_1}\right)^{\frac{1}{k_1}} 
     \leq \left(\frac{1}{n}\sum_{i=1}^n a_i^{k_2}\right)^{\frac{1}{k_2}}. 
\end{align}
 Using the inequality 
 \[
 \Vert \boldsymbol{x}\Vert
     \leq \sqrt{(x_1+\ldots+x_p)^2 + \cdots + (x_1+\ldots+x_p)^2}
     = \sqrt{p}\sum_{i=1}^p x_i,
     \qquad \boldsymbol{x} = (x_1,\ldots,x_p)\in\R_+^p,
 \]  
 and the power mean inequality \eqref{mean_power_ineq}, 
  for any random vector $\boldsymbol{\xi}=(\xi_1,\ldots,\xi_p)$ having 
 non-negative coordinates, we obtain that 
 \begin{align*}
   \ev{\|\boldsymbol{\xi}\|} 
     \leq  \sqrt{p}\ev{\xi_1+\ldots+\xi_p} 
     \leq  \sqrt{p}\cdot p
         \left(\frac{(\ev{\xi_1})^2 + \ldots + (\ev{\xi_p})^2 }{p}\right)^{\frac{1}{2}}
     =p \Vert \ev{\boldsymbol{\xi}}\Vert.    
 \end{align*}
Consequently, we get
 \begin{equation*}
        \ev{\|\boldsymbol{Z}_{k}\|}         
       \leq p \left\|\ev{\boldsymbol{Z}_{k}}\right\| 
        = p \tilde{C}(\ev{\|\boldsymbol{Z}_{0}\|}+\|\mathsf{m}\|C_{\boldsymbol{h}}k),
    \end{equation*}
    where $\tilde{C}=\sup_{j\in\Z_+} \|\tilde{\mathsf{m}}^j\|<\infty$ 
     due to Hypothesis \ref{hyp-primitive-relaxed} (see \eqref{eq-C-tilde})
    and the constant $C_{\boldsymbol{h}}$ is defined by
    \begin{equation}\label{eq-C-h}
        C_{\boldsymbol{h}}  \colonequals  \sup_{\boldsymbol{z}\in\Z_+^p} \|\boldsymbol{h}(\boldsymbol{z})\| < \infty .
    \end{equation}
This yields \eqref{eq-big-O-first-moment}.
    
 (ii). 
  Since the finiteness of the second moment of the norm of a random vector
 implies that of the first moment, using part (i) of the present lemma,
 we have \eqref{eq-big-O-first-moment}.
Taking into account that the expected value and the trace of a random square matrix commute, we have
    \begin{align*}
        \ev{\|\boldsymbol{Z}_{k}\|^2} &= \ev{\tr{\boldsymbol{Z}_{k}(\boldsymbol{Z}_{k})^\top}}
         = \tr{\ev{\bZ_k(\bZ_k)^\top}} = \tr{\ev{\boldsymbol{Z}_{k}}\ev{\boldsymbol{Z}_{k}}^\top} + \tr{\var{\boldsymbol{Z}_{k}}}  \\
        & = \left\| \ev{\bZ_k} \right\|^2 + \tr{\var{\boldsymbol{Z}_{k}}} \leq \left(\ev{\|\boldsymbol{Z}_{k}\|}\right)^2 + p \left\| \var{\boldsymbol{Z}_{k}} \right\| ,
    \end{align*}
     where for the last inequality, we used that for any matrix
      $\mathsf{B}\in\R^{p\times p}$,
     we have $\tr{\mathsf{B}} = \sum_{i=1}^p b_{i,i} \leq \sum_{i=1}^p \vert b_{i,i}\vert
     \leq p\Vert \mathsf{B}\Vert$.
    We know that (\ref{eq-big-O-first-moment}) holds, therefore, in order to get (\ref{eq-big-O-second-moment}), it is enough to see $\left\| \var{\boldsymbol{Z}_{k}} \right\| = \OO(k^2)$ as $k\to\infty$.
  Using the variance decomposition formula, i.e.
    \begin{equation*}
        \var{\boldsymbol{Z}_{k}} = \var{\evcond{\boldsymbol{Z}_{k}}{\mathcal{F}_{k-1}}} + \ev{\varcond{\boldsymbol{Z}_{k}}{\mathcal{F}_{k-1}}}, \qquad k\in\N,
    \end{equation*}
  formulas (\ref{eq-conditional-expected-value}) and (\ref{eq-conditional-variance}),
 and the assumption on $\boldsymbol{\varepsilon}(\bz)$, $\bz\in\Z_+^p$, together with the properties of variance, for each $k\in\N$, we obtain
    \begin{align*}
        \var{\boldsymbol{Z}_{k}}
        & = \var{\mathsf{m}\boldsymbol{\varepsilon}(\boldsymbol{Z}_{k-1})}
                  + \ev{\boldsymbol{\varepsilon}(\boldsymbol{Z}_{k-1})
                         \odot\boldsymbol{\mathsf{\Sigma}}
                        +  \mathsf{m} \mathsf{\Gamma}(\boldsymbol{Z}_{k-1}) \mathsf{m}^\top } \\
        & =
          \var{ \mathsf{m} \mathsf{\Lambda} \boldsymbol{Z}_{k-1}
                + \mathsf{m} \boldsymbol{h}(\boldsymbol{Z}_{k-1}) }
                + \ev{\boldsymbol{\varepsilon}(\boldsymbol{Z}_{k-1})}\odot\boldsymbol{\mathsf{\Sigma}} + \mathsf{m} \ev{\mathsf{\Gamma}(\boldsymbol{Z}_{k-1})}\mathsf{m}^{\top} \\
        &= \tilde{\mathsf{m}}\var{\boldsymbol{Z}_{k-1}}\tilde{\mathsf{m}}^{\top} + \mathsf{m} \var{\boldsymbol{h}(\boldsymbol{Z}_{k-1})}\mathsf{m}^{\top} + \tilde{\mathsf{m}}\cov{\boldsymbol{Z}_{k-1}}{\boldsymbol{h}(\boldsymbol{Z}_{k-1})}\mathsf{m}^{\top} \\
        &\quad + \mathsf{m}\cov{\boldsymbol{h}(\boldsymbol{Z}_{k-1})}{\boldsymbol{Z}_{k-1}}\tilde{\mathsf{m}}^{\top}  + \ev{\boldsymbol{\varepsilon}(\boldsymbol{Z}_{k-1})}\odot\boldsymbol{\mathsf{\Sigma}} + \mathsf{m} \ev{\mathsf{\Gamma}(\boldsymbol{Z}_{k-1})}\mathsf{m}^{\top}.
    \end{align*}
Proceeding recursively, we can reach the following expression
    \begin{align*}
    \var{\boldsymbol{Z}_{k}} &= \tilde{\mathsf{m}}^{k}\var{\boldsymbol{Z}_{0}}(\tilde{\mathsf{m}}^{\top})^{k}
     + \sum_{j=0}^{k-1} \tilde{\mathsf{m}}^{j} \mathsf{m} \var{\boldsymbol{h}(\boldsymbol{Z}_{k-1-j})}\mathsf{m}^{\top}  (\tilde{\mathsf{m}}^{\top})^{j}  \\
    &\quad + \sum_{j=0}^{k-1} \tilde{\mathsf{m}}^{j} \tilde{\mathsf{m}}\cov{\boldsymbol{Z}_{k-1-j}}{\boldsymbol{h}(\boldsymbol{Z}_{k-1-j})}\mathsf{m}^{\top} (\tilde{\mathsf{m}}^{\top})^{j}  \\
    &\quad + \sum_{j=0}^{k-1} \tilde{\mathsf{m}}^{j} \mathsf{m}\cov{\boldsymbol{h}(\boldsymbol{Z}_{k-1-j})}{\boldsymbol{Z}_{k-1-j}}\tilde{\mathsf{m}}^{\top} (\tilde{\mathsf{m}}^{\top})^{j}  \\
    &\quad + \sum_{j=0}^{k-1} \tilde{\mathsf{m}}^{j} \left( \ev{\boldsymbol{\varepsilon}(\boldsymbol{Z}_{k-1-j})}\odot\boldsymbol{\mathsf{\Sigma}}\right) (\tilde{\mathsf{m}}^{\top})^{j}
     + \sum_{j=0}^{k-1} \tilde{\mathsf{m}}^{j} \mathsf{m} \ev{\mathsf{\Gamma}(\boldsymbol{Z}_{k-1-j})}\mathsf{m}^{\top} (\tilde{\mathsf{m}}^{\top})^{j}.
    \end{align*}
In what follows, we will use that for all $\bz=(z_1,\ldots,z_p)\in\R^p$, we have
 \begin{align}\label{help_z_circ_norm}
   \Vert \bz \odot \boldsymbol{\mathsf{\Sigma}} \Vert
    = \left\Vert \sum_{i=1}^p z_i \mathsf{\Sigma}_i\right\Vert
    \leq \Vert \bz\Vert \sum_{i=1}^p \Vert \mathsf{\Sigma}_i\Vert
    = \Vert \bz\Vert \|\boldsymbol{\mathsf{\Sigma}}\|,
 \end{align}
 where $\|\boldsymbol{\mathsf{\Sigma}}\|  \colonequals  \sum_{i=1}^p \|\mathsf{\Sigma}_i\|$.
By \eqref{help_z_circ_norm}, the triangle inequality  and the symmetry of covariance, we get
    \begin{align}\label{help_1}
    \begin{split}
    \left\| \var{\boldsymbol{Z}_{k}} \right\| & \leq  \|\tilde{\mathsf{m}}^{k}\|^2  \left\|\var{\boldsymbol{Z}_{0}}\right\|  + \sum_{j=0}^{k-1} \|\tilde{\mathsf{m}}^{j}\|^2  \|\mathsf{m}\|^2  \left\|\var{\boldsymbol{h}(\boldsymbol{Z}_{k-1-j})}\right\|  \\
    & +  2 \sum_{j=0}^{k-1} \|\tilde{\mathsf{m}}^{j}\|^2  \|\tilde{\mathsf{m}}\|
         \|\mathsf{m}\|  \left\| \cov{\boldsymbol{Z}_{k-1-j}}{\boldsymbol{h}(\boldsymbol{Z}_{k-1-j})}\right\|   \\
    & + \sum_{j=0}^{k-1} \|\tilde{\mathsf{m}}^{j}\|^2  \ev{\|\boldsymbol{\varepsilon}(\boldsymbol{Z}_{k-1-j})\|}\|\boldsymbol{\mathsf{\Sigma}}\|  + \sum_{j=0}^{k-1} \|\tilde{\mathsf{m}}^{j}\|^2  \|\mathsf{m}\|^2  \ev{\|\mathsf{\Gamma}(\boldsymbol{Z}_{k-1-j})\|},
    \end{split}
    \end{align}
    and now we look for an upper bound for each term on the right hand side
    of \eqref{help_1}.

    In case of the first term, we easily have
    \begin{equation*}
        \left\|\var{\boldsymbol{Z}_{0}}\right\|
         =\left\Vert \ev{\bZ_0(\bZ_0)^\top}
                           - \ev{\bZ_0} \ev{(\bZ_0)^\top}
            \right\Vert \leq \ev{\|\boldsymbol{Z}_{0}\|^2}
            + \big(\ev{\|\boldsymbol{Z}_{0}\|}\big)^2
          <\infty.
    \end{equation*}

In case of the second term,
  since $\|\boldsymbol{h}(\boldsymbol{z})\| = \OO(1)$ as $\Vert \bz\Vert\to\infty$,  by \eqref{eq-C-h}, we get
    \begin{equation*}
        \left\|\var{\boldsymbol{h}(\boldsymbol{Z}_{k})}\right\| \leq \ev{\|\boldsymbol{h}(\boldsymbol{Z}_{k})\|^2}
        + \left(\ev{\|\boldsymbol{h}(\boldsymbol{Z}_{k})\|}\right)^2 = \OO(1)
        \qquad \text{as } k\to\infty.
    \end{equation*}

In case of the third term, using 
 (\ref{eq-big-O-first-moment}) and \eqref{eq-C-h}, we get
    \begin{align*}
        \left\| \cov{\boldsymbol{Z}_{k}}{\boldsymbol{h}(\boldsymbol{Z}_{k})}\right\| &\leq \ev{\|\boldsymbol{Z}_{k}\|  \|\boldsymbol{h}(\boldsymbol{Z}_{k})\|} + \ev{\|\boldsymbol{Z}_{k}\|}\ev{\|\boldsymbol{h}(\boldsymbol{Z}_{k})\|} = \OO(k) \qquad \text{as } k\to\infty.
     \end{align*}
     
In case of the fourth term, by the assumption $\boldsymbol{\varepsilon}(\boldsymbol{z}) = \mathsf{\Lambda}\boldsymbol{z} + \boldsymbol{h}(\boldsymbol{z})$, $\bz\in\Z_+^p$, we have
\begin{equation*}
        \ev{\|\boldsymbol{\varepsilon}(\boldsymbol{Z}_{k})\|}  \leq \|\mathsf{\Lambda}\|\ev{\|\boldsymbol{Z}_{k}\|} + C_{\boldsymbol{h}},
        \qquad k\in\N,
\end{equation*}
    where the constant $C_{\boldsymbol{h}}$ is defined in  \eqref{eq-C-h}.  Using \eqref{eq-big-O-first-moment}, this implies that
     \begin{equation}\label{epsilon_exp_asymp}
    \ev{\|\boldsymbol{\varepsilon}(\boldsymbol{Z}_{k})\|} = \OO(k)  \qquad \text{as }   k\to\infty.
    \end{equation}
    
In case of the  fifth term,  by the assumption
 $\|\mathsf{\Gamma}(\boldsymbol{z})\|=\OO(\|\boldsymbol{z}\|)$ as $\Vert \bz\Vert\to\infty$
  and the equation \eqref{eq-big-O-first-moment}, we get
    \begin{equation}\label{Gamma_exp_asymp}
        \ev{\|\mathsf{\Gamma}(\boldsymbol{Z}_{k})\|}
        \leq  \|\mathsf{\Gamma}(\boldsymbol{0}_p)\|  + C_{\mathsf{\Gamma}}\ev{\|\boldsymbol{Z}_{k}\|} = \OO(k) \qquad \text{as } k\to\infty ,
    \end{equation}
    where
    \begin{equation*}
        C_{\mathsf{\Gamma}} = \sup_{\boldsymbol{z}\in\Z_+^p \setminus\{\boldsymbol{0}_p\}} \|\boldsymbol{z}\|^{-1} \|\mathsf{\Gamma}(\boldsymbol{z})\| <\infty.
    \end{equation*}

 Taking into account the previous estimations and that
  $\sup_{j\in\Z_+}\|\tilde{\mathsf{m}}^j\|<\infty$
   due to Hypothesis \eqref{hyp-primitive-relaxed} (see \eqref{eq-C-tilde}),
  the inequality \eqref{help_1} implies that
    \begin{equation*}
        \left\| \var{\boldsymbol{Z}_{k}} \right\|
        = \OO(1) +  \OO(k)+ \sum_{j=0}^{k-1} \OO(k-1-j) = \OO(k^2)
         \qquad \text{as } k\to\infty ,
    \end{equation*}
    which concludes the proof of \eqref{eq-big-O-second-moment}.

Next, we verify \eqref{eq-O-mart-diff-1}.
Then
    \begin{align}\label{exp_M_ineq}
        \ev{\|\boldsymbol{M}_{k}\|^2}={\ev{\tr{\boldsymbol{M}_{k}(\boldsymbol{M}_{k})^\top}}} = {\tr{\ev{\boldsymbol{M}_{k}(\boldsymbol{M}_{k})^\top}}},\qquad k\in\N,
    \end{align}
 where, by the tower rule, we have $\ev{\boldsymbol{M}_{k}(\boldsymbol{M}_{k})^\top} = \ev{\varcond{\boldsymbol{Z}_{k}}{\mathcal{F}_{k-1}}}$. Consequently,  using (\ref{eq-conditional-variance}), \eqref{help_z_circ_norm} and the inequality $\tr{\mathsf{B}} \leq p\Vert \mathsf{B}\Vert$ for any matrix
      $\mathsf{B}\in\R^{p\times p}$  (justified earlier),
  we get that
\begin{align}\label{tr_M_exp}
        {\tr{\ev{\boldsymbol{M}_{k}(\boldsymbol{M}_{k})^\top}}}
        &= \tr{\ev{\boldsymbol{\varepsilon}(\boldsymbol{Z}_{k-1})}\odot\boldsymbol{\mathsf{\Sigma}}}
        + \tr{\mathsf{m} \ev{\mathsf{\Gamma}(\boldsymbol{Z}_{k-1})}  \mathsf{m}^{\top}} \nonumber\\
        &\leq {p\|\ev{\boldsymbol{\varepsilon}(\boldsymbol{Z}_{k-1})}\odot\boldsymbol{\mathsf{\Sigma}}\|} + {p\|\mathsf{m} \ev{\mathsf{\Gamma}(\boldsymbol{Z}_{k-1})}  \mathsf{m}^{\top}\|}\nonumber \\
        & \leq {p\ev{\|\boldsymbol{\varepsilon}(\boldsymbol{Z}_{k-1})\|}\|\boldsymbol{\mathsf{\Sigma}}\|} + {p\ev{\|\mathsf{\Gamma}(\boldsymbol{Z}_{k-1})\|} \|\mathsf{m}\|^{2}},
        \qquad k\in\N.
    \end{align}
  Therefore, \eqref{eq-O-mart-diff-1}
 follows from   \eqref{epsilon_exp_asymp}, \eqref{Gamma_exp_asymp}, \eqref{exp_M_ineq} and \eqref{tr_M_exp}.

    (iii). Consider the following reformulation of the martingale difference sequence $(\boldsymbol{M}_k)_{k\in\N}$ defined in \eqref{eq-def-mtg-dif}:
\begin{equation*}
    \boldsymbol{M}_k = \sum_{i=1}^p \left(\sum_{j=1}^{\phi_{k-1,i}(\boldsymbol{Z}_{k-1})} \boldsymbol{X}_{k-1,j,i} - \varepsilon_i(\boldsymbol{Z}_{k-1})\boldsymbol{m}_i \right),
    \qquad {k\in\N,}
\end{equation*}
 where we used \eqref{eq-conditional-expected-value}.
 
Applying the power mean inequality \eqref{mean_power_ineq} twice,
 and adding and subtracting  the random variable
 $\sum_{j=1}^{\phi_{k-1,i}(\boldsymbol{Z}_{k-1})}  \boldsymbol{m}_{i}  = \phi_{k-1,i}(\boldsymbol{Z}_{k-1}) \boldsymbol{m}_{i}$, we get
\begin{align*}
    \|\boldsymbol{M}_k\|^4 
    &\leq \left( \sum_{i=1}^p  \left \Vert  \sum_{j=1}^{\phi_{k-1,i}(\boldsymbol{Z}_{k-1})} \boldsymbol{X}_{k-1,j,i} - \varepsilon_i(\boldsymbol{Z}_{k-1})\boldsymbol{m}_i \right \Vert \right)^4 \\ 
    &\leq p^3 \sum_{i=1}^p \left\| \sum_{j=1}^{\phi_{k-1,i}(\boldsymbol{Z}_{k-1})} \boldsymbol{X}_{k-1,j,i} - \varepsilon_i(\boldsymbol{Z}_{k-1})\boldsymbol{m}_i \right\|^4 \\
    & \leq 8p^3 \sum_{i=1}^p \left(\left\| \sum_{j=1}^{\phi_{k-1,i}(\boldsymbol{Z}_{k-1})} (\boldsymbol{X}_{k-1,j,i} -\boldsymbol{m}_i) \right\|^4 + \left\| \big(\phi_{k-1,i}(\boldsymbol{Z}_{k-1}) - \varepsilon_i(\boldsymbol{Z}_{k-1}) \big) \boldsymbol{m}_i \right\|^4\right)
\end{align*}
 for $k\in\N$.
Using again the power mean inequality \eqref{mean_power_ineq} 
 and that $\|\boldsymbol{z}\|^4 = \left(z_1^2 + \ldots + z_p^2\right)^2$, 
 $\bz=(z_1,\ldots,z_p)\in\Z_+^p$, we get
\begin{align*}
    \ev{\|\boldsymbol{M}_k\|^4} &\leq 8p^4 \sum_{i=1}^p \sum_{l=1}^p \ev{ \left( \sum_{j=1}^{\phi_{k-1,i}(\boldsymbol{Z}_{k-1})} (X_{k-1,j,i,l} -m_{i,l})\right)^4 }\\
    & \quad + 8p^4 \sum_{i=1}^p \sum_{l=1}^p \ev{\left(\bigl(\phi_{k-1,i}(\boldsymbol{Z}_{k-1}) - \varepsilon_i(\boldsymbol{Z}_{k-1}) \bigr) m_{i,l} \right)^4}.
\end{align*}
We will compute the previous upper bounds by first determining 
 the corresponding conditional expectations  with respect to $\mathcal{F}_{k-1}$.
Using the notations \eqref{notats_moments_3}  and the Markov property of $(\bZ_k)_{k\in\Z_+}$, for each $k\in\N$, $i,l\in\{1,\ldots,p\}$, we obtain
\begin{equation*}
    \evcond{\left(\phi_{k-1,i}(\boldsymbol{Z}_{k-1}) - \varepsilon_i(\boldsymbol{Z}_{k-1}) \right)^4 m_{i,l} ^4}{\mathcal{F}_{k-1}} = m_{i,l}^4 \kappa_i(\boldsymbol{Z}_{k-1}),
\end{equation*}
and, by  \eqref{notats_moments_2}, \eqref{notats_moments_3} and 
 Lemma \ref{lemma-aux}  together with the independence of 
 $\boldsymbol{\phi}_{k-1}(\boldsymbol{z})$, $\bz\in\Z_+^p$,
 $\mathbf{X}_{k-1,j,i}$, $j\in\N$, $i\in\{1,\ldots,p\}$ and $\bZ_{k-1}$,
 we get that
\begin{align*}
    \evcond{ \left( \sum_{j=1}^{\phi_{k-1,i}(\boldsymbol{Z}_{k-1})} (X_{k-1,j,i,l} -m_{i,l})\right)^4 }{\mathcal{F}_{k-1}} 
    &= 3 \mathsf{\Sigma}_{i,l,l}^2 
    \left( \mathsf{\Gamma}_{i,i}(\boldsymbol{Z}_{k-1}) 
      + \varepsilon_i (\boldsymbol{Z}_{k-1})^2 \right) \\
    &\quad + (\zeta_{i,l}-3 \mathsf{\Sigma}_{i,l,l}^2) \varepsilon_i (\boldsymbol{Z}_{k-1}),
\end{align*}
 where  $\mathsf{\Sigma}_{i,l,l}$ is the $l$-th diagonal element of the variance matrix $\mathsf{\Sigma}_i$ (see \eqref{notats_moments_2}).

Consequently, we get
\begin{align*}
    \ev{\|\boldsymbol{M}_k\|^4} &\leq 8 p^4 \sum_{i=1}^p \sum_{l=1}^p
     \Big( m_{i,l}^4 \ev{\kappa_i(\boldsymbol{Z}_{k-1})} + 3 \mathsf{\Sigma}_{i,l,l}^2\ev{\mathsf{\Gamma}_{i,i}(\boldsymbol{Z}_{k-1})} \\
    &\phantom{\leq 8 p^4 \sum_{i=1}^p \sum_{l=1}^p \quad} 
     + 3 \mathsf{\Sigma}_{i,l,l}^2 \ev{\varepsilon_i (\boldsymbol{Z}_{k-1})^2} + (\zeta_{i,l}-3 \mathsf{\Sigma}_{i,l,l}^2) \ev{\varepsilon_i (\boldsymbol{Z}_{k-1})} \Big).
\end{align*}
 Thus, to get \eqref{eq-O-mart-diff-4}  it is enough to check that 
 for each $i\in\{1,\ldots,p\}$, $\ev{\varepsilon_i (\boldsymbol{Z}_{k})}$, $\ev{\mathsf{\Gamma}_{i,i}(\boldsymbol{Z}_{k})}$, $\ev{\varepsilon_i (\boldsymbol{Z}_{k})^2}$, $\ev{\kappa_i(\boldsymbol{Z}_{k})}$ are $\OO(k^2)$ 
   as $k\to\infty$.
Using \eqref{epsilon_exp_asymp} and \eqref{Gamma_exp_asymp}, we get 
\begin{equation*}
    \ev{\varepsilon_i (\boldsymbol{Z}_{k})} \leq \ev{\|\boldsymbol{\varepsilon}(\boldsymbol{Z}_{k})\|} = \OO(k), \qquad \ev{\mathsf{\Gamma}_{i,i}(\boldsymbol{Z}_{k})} \leq \ev{\|\mathsf{\Gamma}(\boldsymbol{Z}_{k})\|} = \OO(k)
\end{equation*}
  as $k\to\infty$ for $i\in\{1,\ldots,p\}$.
Moreover, by \eqref{eq-big-O-first-moment} and \eqref{eq-big-O-second-moment}, 
 we obtain
 \begin{align*}
     \ev{\varepsilon_i (\boldsymbol{Z}_{k})^2} &\leq \ev{\|\boldsymbol{\varepsilon}(\boldsymbol{Z}_{k})\|^2} = \ev{\|\mathsf{\Lambda}\boldsymbol{Z}_k + \boldsymbol{h}(\boldsymbol{Z}_k)\|^2} \leq  2( \|\mathsf{\Lambda}\|^2 \ev{\|\boldsymbol{Z}_{k}\|^2} 
      +   C_{\boldsymbol{h}}^2) = \OO(k^2) 
 \end{align*}
as $k\to\infty$ for $i\in\{1,\ldots,p\}$,  where $C_{\boldsymbol{h}}$ is defined in \eqref{eq-C-h}.
Finally, by Hypothesis \ref{hyp-moment-4}, we get
\begin{equation*}
    C_{\kappa} := \sup_{\substack{\boldsymbol{z}\in\Z_+^p \setminus\{\boldsymbol{0}_p\} \\ i = 1,\ldots,p}}  \|\boldsymbol{z}\|^{-2} \|\kappa_i (\boldsymbol{z})\| <\infty ,
\end{equation*}
 and hence, by \eqref{eq-big-O-second-moment}, we have $\ev{\kappa_i(\boldsymbol{Z}_{k})} \leq \kappa_i(\boldsymbol{0}_p) + C_\kappa \ev{\|\boldsymbol{Z}_{k}\|^2} = \OO(k^2)$ as $k\to\infty$ for $i\in\{1,\ldots,p\}$.
\end{proof}

Next, we recall a result on weak convergence of random step processes toward a 
 diffusion process due to Isp\'any and Pap \cite[Corollary 2.2]{ispany-pap-2010}.

\begin{theorem}\label{thm-ispany-pap}
Let $\boldsymbol{b}:\R_+\times\R^p\to\R^p$ and $\mathsf{C}:\R_+\times\R^p\to\R^{p\times r}$ be continuous functions. Assume that {uniqueness in the sense of probability law holds for the SDE} 
\begin{equation}\label{eq-thm-ispany}
    \dif \boldsymbol{\mathcal{U}}_t = \boldsymbol{b}(t,\boldsymbol{\mathcal{U}}_t)\dif t +  \mathsf{C}(t,\boldsymbol{\mathcal{U}}_t)\dif \boldsymbol{\mathcal{W}}_t,\qquad t \in\R_+,
\end{equation}
with initial value $\boldsymbol{\mathcal{U}}_0=\boldsymbol{u}_0$ for all $\boldsymbol{u}_0\in \R^p$, where $(\boldsymbol{\mathcal{W}}_t)_{t\in\R_+}$ is an $r$--dimensional standard Wiener process. Let $\boldsymbol{\eta}$ be a probability measure on $ (\R^p,\mathcal{B}(\R^p))$, and let $(\boldsymbol{\mathcal{U}}_t)_{t\in\R_+}$ be a solution of (\ref{eq-thm-ispany}) with initial distribution $\boldsymbol{\eta}$. 
For each $n \in \N,$ let $(\boldsymbol{U}_{k}^{(n)})_{k\in\Z_+}$ be a sequence of 
 $\R^p$--valued random vectors adapted to a filtration $(\mathcal{F}_{k}^{(n)})_{k\in\Z_+}$
 (i.e, $\boldsymbol{U}_{k}^{(n)}$ is $\mathcal{F}_{k}^{(n)}$--measurable)
  such that $\ev{\|\boldsymbol{U}_k^{(n)}\|^2}<\infty$ for each $n, k \in \N$. 
Let 
 \[
  \boldsymbol{\mathcal{U}}_t^{(n)}  \colonequals  \sum_{k=0}^{\lfloor n t\rfloor}    
                                       \boldsymbol{U}_{k}^{(n)}, \qquad t\in\R_+,\quad n\in\N.
 \] 
Suppose $\boldsymbol{U}_0^{(n)} \stackrel{\mathcal{L}}{\longrightarrow} \boldsymbol{\eta}$ as $n \rightarrow \infty$,  and that for all $T>0,$
\begin{enumerate}[label=(\roman*)]
    \item $\sup_{t\in [0,T]} \left\| \sum_{k=1}^{\lfloor n t\rfloor} \evcond{\boldsymbol{U}_k^{(n)}}{\mathcal{F}_{k-1}^{(n)}} - \int_0^t \boldsymbol{b}(s,\boldsymbol{\mathcal{U}}_s^{(n)})\dif s \right\| \stackrel{\mathrm{P}}{\longrightarrow} 0$ {as $n\to\infty$,}
    \item $\sup_{t\in [0,T]} \left\| \sum_{k=1}^{\lfloor n t\rfloor} \varcond{\boldsymbol{U}_k^{(n)}}{\mathcal{F}_{k-1}^{(n)}} - \int_0^t \mathsf{C}(s,\boldsymbol{\mathcal{U}}_s^{(n)})\mathsf{C}(s,\boldsymbol{\mathcal{U}}_s^{(n)})^\top\dif s \right\| \stackrel{\mathrm{P}}{\longrightarrow} 0$ {as $n\to\infty$,}
    \item $\sum_{k=1}^{\lfloor n T \rfloor} \evcond{\|\boldsymbol{U}_k^{(n)}\|^2 \1_{\{\|\boldsymbol{U}_k^{(n)}\|>\theta\}}}{\mathcal{F}_{k-1}^{(n)}} \stackrel{\mathrm{P}}{\longrightarrow} 0$ {as $n\to\infty$} for all $\theta >0$.
\end{enumerate}
Then $\boldsymbol{\mathcal{U}}^{(n)}\stackrel{\mathcal{L}}{\longrightarrow}\boldsymbol{\mathcal{U}}$ as $n \to \infty$.
\end{theorem}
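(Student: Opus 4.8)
The plan is to regard $(\boldsymbol{\mathcal{U}}^{(n)})_{n\in\N}$ as a sequence of $\R^p$--valued semimartingales and to invoke the general functional limit theorem for semimartingales converging to a diffusion, as developed in Jacod and Shiryaev \cite{JACOD_SHIRYAEV_2003}. For each $n\in\N$, the random step process $\boldsymbol{\mathcal{U}}_t^{(n)} = \sum_{k=0}^{\lfloor nt\rfloor}\boldsymbol{U}_k^{(n)}$ is a pure-jump semimartingale with respect to the time-changed filtration $(\mathcal{G}_t^{(n)})_{t\in\R_+}$ given by $\mathcal{G}_t^{(n)} := \mathcal{F}_{\lfloor nt\rfloor}^{(n)}$, whose jumps occur at the deterministic times $k/n$, $k\in\N$, with jump sizes $\boldsymbol{U}_k^{(n)}$. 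First I would compute the semimartingale characteristics of $\boldsymbol{\mathcal{U}}^{(n)}$ relative to a fixed truncation function $\boldsymbol{\chi}:\R^p\to\R^p$, say $\boldsymbol{\chi}(\boldsymbol{x}) = \boldsymbol{x}\1_{\{\|\boldsymbol{x}\|\leq 1\}}$: the continuous martingale part vanishes identically, the predictable drift is $\boldsymbol{B}_t^{(n)} = \sum_{k=1}^{\lfloor nt\rfloor}\evcond{\boldsymbol{\chi}(\boldsymbol{U}_k^{(n)})}{\mathcal{F}_{k-1}^{(n)}}$, and the compensator of the jump measure is the sum of the conditional laws of the $\boldsymbol{U}_k^{(n)}$.

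Then the core of the argument is to translate the hypotheses (i)--(iii) into the convergence conditions of the diffusion limit theorem of Jacod and Shiryaev \cite[Chapter IX]{JACOD_SHIRYAEV_2003} (equivalently, into the martingale-problem formulation of Ethier and Kurtz \cite{EthKur}). The conditional Lindeberg condition (iii) guarantees that the contribution of large jumps is asymptotically negligible, so that the modified second characteristic $\widetilde{\mathsf{C}}_t^{(n)}$ (built from truncated conditional second moments) has the same limit as the untruncated conditional covariances $\sum_{k=1}^{\lfloor nt\rfloor}\varcond{\boldsymbol{U}_k^{(n)}}{\mathcal{F}_{k-1}^{(n)}}$, which by (ii) converges to $\int_0^t \mathsf{C}(s,\boldsymbol{\mathcal{U}}_s^{(n)})\mathsf{C}(s,\boldsymbol{\mathcal{U}}_s^{(n)})^\top\dif s$; likewise the drift $\boldsymbol{B}^{(n)}$ agrees asymptotically with $\sum_{k=1}^{\lfloor nt\rfloor}\evcond{\boldsymbol{U}_k^{(n)}}{\mathcal{F}_{k-1}^{(n)}}$, which by (i) converges to $\int_0^t \boldsymbol{b}(s,\boldsymbol{\mathcal{U}}_s^{(n)})\dif s$. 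The subtle point here is that the target characteristics are written along the approximating sequence $\boldsymbol{\mathcal{U}}^{(n)}$ rather than along the prospective limit, so I would use the self-referential version of the criterion, for which the continuity of $\boldsymbol{b}$ and $\mathsf{C}$ together with joint convergence on the Skorokhod space is essential.

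With the characteristic convergence in hand, I would establish tightness of $(\boldsymbol{\mathcal{U}}^{(n)})_{n\in\N}$ in $\mathbf{D}(\R_+,\R^p)$ by the standard semimartingale tightness criterion of Jacod and Shiryaev \cite[Chapter VI]{JACOD_SHIRYAEV_2003}, whose hypotheses are exactly controlled by the predictable characteristics computed above, while the assumption $\boldsymbol{U}_0^{(n)}\stackrel{\mathcal{L}}{\longrightarrow}\boldsymbol{\eta}$ fixes the initial distribution. Any weak subsequential limit then solves the martingale problem associated with the generator of the SDE; since uniqueness in the sense of probability law is assumed for that SDE, the limit is uniquely identified as $(\boldsymbol{\mathcal{U}}_t)_{t\in\R_+}$, whence the whole sequence converges and $\boldsymbol{\mathcal{U}}^{(n)}\stackrel{\mathcal{L}}{\longrightarrow}\boldsymbol{\mathcal{U}}$.

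I expect the main obstacle to be the careful handling of the truncation function in conjunction with the conditional Lindeberg condition: one must verify that the truncated and untruncated second-order characteristics share the same limit and that no jump mass escapes to infinity, which is precisely where the discrete-to-continuous passage and the equivalence between the discrete conditions (ii)--(iii) and the diffusion-type characteristics must be made rigorous. The drift and initial-value bookkeeping, by contrast, I expect to be routine once the semimartingale framework is set up.
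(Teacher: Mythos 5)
This statement is not proved in the paper at all: it is recalled verbatim as a known result, namely Corollary 2.2 of Isp\'any and Pap \cite{ispany-pap-2010}, so there is no internal proof to compare against. Your outline --- viewing the step processes as pure-jump semimartingales, computing their predictable characteristics relative to a truncation function, using the conditional Lindeberg condition (iii) to show the truncated and untruncated conditional first/second moments share the same limits, and then applying the self-referential diffusion-convergence theorem of Jacod and Shiryaev \cite[Chapter IX]{JACOD_SHIRYAEV_2003} together with uniqueness in law (which, holding for every deterministic starting point, also gives uniqueness for the initial distribution $\boldsymbol{\eta}$) to identify the limit --- is precisely the strategy by which Isp\'any and Pap established the cited result, so your proposal is a correct reconstruction of the standard argument rather than a genuinely different route.
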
  

For measurable mappings $\boldsymbol{\Phi}$, $\boldsymbol{\Phi}_n: \mathbf{D}(\R_+,\R^p)\to\mathbf{D}(\R_+,\R^p)$, $n\in\N$, let $\mathbf{C}_{\boldsymbol{\Phi},(\boldsymbol{\Phi}_n)_{n\in\N}}$ be the set of all functions $\boldsymbol{f}\in \mathbf{C}(\R_+,\R^p)$ for which $\boldsymbol{\Phi}_n (\boldsymbol{f}_n) \to \boldsymbol{\Phi}(
    \boldsymbol{f})$ whenever $\boldsymbol{f}_n\stackrel{\mathcal{\mathrm{lu}}}{\longrightarrow} 
    \boldsymbol{f}$ with $\boldsymbol{f}_n\in \mathbf{D}(\R_+,\R^p)$, $n\in\N$. The notation $\stackrel{\mathcal{\mathrm{lu}}}{\longrightarrow}$ refers to local uniform convergence, i.e., $\sup_{t\in [0,T]} \|\boldsymbol{f}_n(t)-\boldsymbol{f}(t)\|\to 0$ as $n\to\infty$ for all $T\in\R_{++}$. The following result is a kind of continuous mapping theorem,  which can be considered as a consequence of Theorem 3.27 in Kallenberg \cite{Kallenberg_1997}, and its proof also appears in Isp\'any and Pap \cite[Lemma 3.1]{ispany-pap-2010}.
    
\begin{theorem}\label{cont-map-thm}
    Let $(\boldsymbol{\mathcal{U}}_t)_{t\in\R_+}$ and $(\boldsymbol{\mathcal{U}}_t^{(n)})_{t\in\R_+}$, $n\in\N$, be 
    $\R^p$--valued stochastic processes with càdlàg paths
    such that $\boldsymbol{\mathcal{U}}^{(n)}\stackrel{\mathcal{L}}{\longrightarrow}
    \boldsymbol{\mathcal{U}}$ as $n \to \infty$. Let $\boldsymbol{\Phi}: \mathbf{D}(\R_+,\R^p)\to\mathbf{D}(\R_+,\R^p)$ and $\boldsymbol{\Phi}_n: \mathbf{D}(\R_+,\R^p)\to\mathbf{D}(\R_+,\R^p)$, $n\in\N$, be measurable mappings such that there exists a measurable set $\mathbf{C}\subset\mathbf{C}_{\boldsymbol{\Phi},(\boldsymbol{\Phi}_n)_{n\in\N}}$ with 
    $ \prob{\boldsymbol{\mathcal{U}}\in\mathbf{C}}=1$. 
    Then $\boldsymbol{\Phi}_n (\boldsymbol{\mathcal{U}}^{(n)})\stackrel{\mathcal{L}}{\longrightarrow} \boldsymbol{\Phi}(
    \boldsymbol{\mathcal{U}})$ as $n\to\infty$.
\end{theorem}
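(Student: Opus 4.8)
The plan is to reduce the assertion to a pathwise (almost sure) convergence statement by means of the Skorokhod representation theorem, and then to invoke the very definition of the set $\mathbf{C}_{\boldsymbol{\Phi},(\boldsymbol{\Phi}_n)_{n\in\N}}$; this is essentially the route of Theorem 3.27 in Kallenberg \cite{Kallenberg_1997}. First I would recall that $\mathbf{D}(\R_+,\R^p)$ endowed with the metric of \cite[Chapter VI]{JACOD_SHIRYAEV_2003} is a complete separable metric space. Hence, since $\boldsymbol{\mathcal{U}}^{(n)} \stackrel{\mathcal{L}}{\longrightarrow} \boldsymbol{\mathcal{U}}$, the Skorokhod representation theorem yields a probability space carrying processes $\tilde{\boldsymbol{\mathcal{U}}}^{(n)}$ and $\tilde{\boldsymbol{\mathcal{U}}}$ with the same distributions as $\boldsymbol{\mathcal{U}}^{(n)}$ and $\boldsymbol{\mathcal{U}}$, respectively, such that $\tilde{\boldsymbol{\mathcal{U}}}^{(n)} \to \tilde{\boldsymbol{\mathcal{U}}}$ almost surely in the Skorokhod topology.

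Next I would upgrade this convergence to local uniform convergence on a set of full measure. Since $\mathbf{C}$ is measurable and $\prob{\boldsymbol{\mathcal{U}}\in\mathbf{C}}=1$, the representation gives $\tilde{\boldsymbol{\mathcal{U}}}\in\mathbf{C}\subset\mathbf{C}(\R_+,\R^p)$ almost surely, so the limiting path is continuous almost surely. The key analytic fact here is that convergence in the Skorokhod topology toward a \emph{continuous} limit is equivalent to local uniform convergence. Consequently, on the full-measure event $\{\tilde{\boldsymbol{\mathcal{U}}}\in\mathbf{C}\}$ one obtains $\tilde{\boldsymbol{\mathcal{U}}}^{(n)} \stackrel{\mathrm{lu}}{\longrightarrow} \tilde{\boldsymbol{\mathcal{U}}}$.

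Then I would apply the defining property of $\mathbf{C}_{\boldsymbol{\Phi},(\boldsymbol{\Phi}_n)_{n\in\N}}$: because $\tilde{\boldsymbol{\mathcal{U}}}\in\mathbf{C}\subset\mathbf{C}_{\boldsymbol{\Phi},(\boldsymbol{\Phi}_n)_{n\in\N}}$ almost surely and $\tilde{\boldsymbol{\mathcal{U}}}^{(n)}\stackrel{\mathrm{lu}}{\longrightarrow}\tilde{\boldsymbol{\mathcal{U}}}$ with $\tilde{\boldsymbol{\mathcal{U}}}^{(n)}\in\mathbf{D}(\R_+,\R^p)$, the definition forces $\boldsymbol{\Phi}_n(\tilde{\boldsymbol{\mathcal{U}}}^{(n)})\to\boldsymbol{\Phi}(\tilde{\boldsymbol{\mathcal{U}}})$ almost surely in $\mathbf{D}(\R_+,\R^p)$. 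Finally, almost sure convergence implies convergence in distribution, and the measurability of $\boldsymbol{\Phi}_n$ and $\boldsymbol{\Phi}$ guarantees that $\boldsymbol{\Phi}_n(\tilde{\boldsymbol{\mathcal{U}}}^{(n)})$ and $\boldsymbol{\Phi}(\tilde{\boldsymbol{\mathcal{U}}})$ have the same laws as $\boldsymbol{\Phi}_n(\boldsymbol{\mathcal{U}}^{(n)})$ and $\boldsymbol{\Phi}(\boldsymbol{\mathcal{U}})$; hence $\boldsymbol{\Phi}_n(\boldsymbol{\mathcal{U}}^{(n)})\stackrel{\mathcal{L}}{\longrightarrow}\boldsymbol{\Phi}(\boldsymbol{\mathcal{U}})$, as required. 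The main obstacle I anticipate is purely technical bookkeeping: rigorously justifying the Skorokhod-to-local-uniform upgrade at continuous limit points, and checking the measurability needed to transport the pushed-forward laws back to the original processes. The genuine probabilistic content is otherwise entirely encapsulated in the Skorokhod coupling together with the definition of $\mathbf{C}_{\boldsymbol{\Phi},(\boldsymbol{\Phi}_n)_{n\in\N}}$.
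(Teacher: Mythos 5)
Your proposal is correct and is essentially the same argument as the one the paper relies on: the paper states this theorem without proof, citing Kallenberg (Theorem 3.27) and Isp\'any and Pap (Lemma 3.1), and the proof found there is precisely your route — Skorokhod coupling on the Polish space $\mathbf{D}(\R_+,\R^p)$, the upgrade from Skorokhod convergence to local uniform convergence at an almost surely continuous limit, the defining property of $\mathbf{C}_{\boldsymbol{\Phi},(\boldsymbol{\Phi}_n)_{n\in\N}}$, and the transport of laws back through the measurable mappings $\boldsymbol{\Phi}_n$ and $\boldsymbol{\Phi}$. Nothing essential is missing.
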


\section*{Acknowledgements}
We would like to thank the referee for her/his comments that helped us improve the paper.

\section*{Funding}
M\'aty\'as Barczy was supported by the project TKP2021-NVA-09.
Project no.\ TKP2021-NVA-09 has been implemented with the support
 provided by the Ministry of Innovation and Technology of Hungary from the National Research, Development and Innovation Fund,
 financed under the TKP2021-NVA funding scheme. 
Miguel Gonz\'alez, Pedro Mart\'in-Ch\'avez and  In\'es del Puerto are supported by grant PID2019-108211GB-I00 funded by MCIN/AEI/10.13039/501100011033, by “ERDF A way of making Europe”. Pedro Mart\'in-Ch\'avez is also grateful to the Spanish Ministry of Universities for support from a predoctoral fellowship Grant no.\ FPU20/06588.

\section*{Declarations}

{\bf Conflict of interest.} 
The authors declare that they have no conflict of interest.

\bibliographystyle{acm}
\bibliography{references}
\end{document}